\newtheorem{theorem}[subsection]{Theorem}
\newtheorem{prop}[subsection]{Proposition}
\newtheorem{lemma}[subsection]{Lemma}
\theoremstyle{definition}
\newtheorem{remark}[subsection]{Remark}
\newcommand{\N}{\mathbb N}
\newcommand{\Q}{\mathbb Q}
\newcommand{\C}{\mathbb C}
\newcommand{\Z}{\mathbb Z}
\newcommand{\M}{\mathbb M}
\title{regular representations of the quantum groups at roots of unity}
\author{Minxian Zhu}
\address{Department of Mathematics, Yale University, New Haven, CT 06520}
\email{minxian.zhu@yale.edu}
\begin{document}
\maketitle

\begin{abstract}

We study the bimodule structure of the quantum function algebra at roots of $1$ 
and prove that it admits an increasing filtration with factors isomorphic to 
the tensor products of the dual of Weyl modules $V_\lambda^* \otimes V_{- \omega_0 \lambda}^*$. 
As an application we compute the $0$-th Hochschild cohomology of the function algebra at roots of $1$.

\end{abstract}

\section{Introduction}

Let $\mathfrak g$ be a simple complex Lie algebra, 
and let $\mathbf U$ be the quantized enveloping algebra of $\mathfrak g$ 
over $\Q(v)$, $v$ an indeterminate. 
Let $\mathbf O$ be the linear span of matrix coefficients of finite-dimensional $\mathbf U$-modules (see [D]), 
then there is a perfect Hopf algebra pairing between $\mathbf U$ and $\mathbf O$. 
Moreover, 
as a $\mathbf U \otimes \mathbf U$-module, 
$\mathbf O$ has a classical Peter-Weyl type of decomposition.  
Following [L3], 
set $\mathscr A = \Z[v, v^{-1}]$
and let $U$ denote Lusztig's $\mathscr A$-form 
of $\mathbf U$ generated by the divided powers. 
Set $\mathscr A_0 = \Q [v, v^{-1}]$ and 
$U_{\mathscr A_0} = U \otimes_{\mathscr A}  \mathscr A_0$. 
Let $U_{\mathscr A_0}^*$ be the set of all $\mathscr A_0$-linear maps 
$U_{\mathscr A_0} \to \mathscr A_0$, 
and set $O = \mathbf O \cap U_{\mathscr A_0}^*$. 
Then $O$ is a Hopf algebra over $\mathscr A_0$ and 
the inclusion $O \subset \mathbf O$ induces an isomorphism of $\Q(v)$-algebras: 
$O \otimes_{\mathscr A_0} \Q(v) \cong \mathbf O$. 
Now let $q\in \C$ be a primitive $\ell$-th root of unity; 
set $U_q = U_{\mathscr A_0} \otimes_{\mathscr A_0} \Q(q)$ 
and $O_q  = O \otimes_{\mathscr A_0} \Q(q)$, 
where $\Q(q)$ is made into an $\mathscr A_0$-algebra by specializing $v$ to $q$. 
There is a Hopf algebra pairing between $U_q$ and $O_q$, 
thus $O_q$ admits the structure of a $U_q \times U_q$-module. 
The main goal of the present paper is to investigate this bimodule structure. 

Our motivation comes from a family of vertex operator algebras associated to
the modified regular representations of the affine Lie algebra $\hat {\mathfrak g}$ (see [Z1] and references therein). 
Each one of these vertex operator algebras admits two commuting actions of 
$\hat {\mathfrak g}$ in dual levels. 
When the dual central charges are generic, 
it decomposes into summands corresponding to the dominant weights of $\mathfrak g$. 
However when the dual central charges are rational,  
the module structure is less well understood, 
and it should be closely related to the regular representation 
of the corresponding quantum group at a root of unity, 
by the equivalence of tensor categories 
established by Kazhdan and Lusztig
between representations of the affine Lie algebra and representations of the quantum group (see [KL1-4]).

For simplicity, we assume $\mathfrak g = \mathfrak {sl}_n$ is of type $A$, and $\ell \geq n$ is odd.   
The quantum coordinate algebra $\mathbf O$ of $ \mathfrak {sl}_n$ can be described by generators and relations. 
Let $V$ be the quantization of the $n$-dimensinal natural representation of $\mathfrak {sl}_n$, 
then $\mathbf O$ is generated by the matrix coefficients $X_{ij}, 1\leq i, j\leq n$ of $V$ subject to a list of relations (see [D], [T], [APW]). 
In Section 2, we will show that $O$, as an $\mathscr A_0$-subalgebra of $\mathbf O$, 
is generated by $X_{ij}$'s over $\mathscr A_0$, 
which generalizes Proposition 1.3 of [CL]. 
A similar result was obtained in the Appendix of [APW] by P. Polo, 
using some local ring as the basic ring. 
Another $\mathscr A_0$-form of $\mathbf U$, 
denoted by $\Gamma (\mathfrak g)$ and slightly different from $U_{\mathscr A_0}$, 
was introduced in [CL]. 
The dual of $\Gamma (\mathfrak g)$, appropriately defined, coincides with $O$.
Specializing $v$ to $q$, a primitive $\ell$-th root of $1$, 
we get a perfect pairing between 
$\Gamma_q (\mathfrak g) = \Gamma (\mathfrak g) \otimes_{\mathscr A_0} \Q(q)$ and 
$O_q = O \otimes_{\mathscr A_0} \Q(q) $,  
hence it induces an embedding $O_q \hookrightarrow \Gamma_q (\mathfrak g) ^*$ ([CL, Lemma 6.1]). 
When $U_q = U_{\mathscr A_0} \otimes_{\mathscr A_0} \Q(q)$ 
and $O_q$ are concerned, 
we still have the embedding $O_q \hookrightarrow U_q^*$, 
though the pairing between $U_q$ and $O_q$ is in general degenerate on the $U_q$-side.  

Consider the non-semisimple category $\mathscr C_f$ of finite dimensional $U_q$-modules (of type $\mathbf 1$). 
The quantum function algebra $O_q$ 
can be realized as the linear span of matrix coefficients of modules from $\mathscr C_f$. 
One of our main results is that $O_q$, 
as a $U_q \times U_q$-module, 
admits an increasing filtration with factors isomorphic to the tensor products of the dual of Weyl modules 
$V_\lambda^* \otimes V_{- \omega_0 \lambda}^*$. 
It can be regarded as a generalization of the Peter-Weyl type of decomposition for $O_q$, 
$q$ not a root of unity.
Similar results for the regular representation of the affine Lie algebra 
in a rational level 
provide a proof of the conjecture, 
stated at the end of [Z1], 
about the bimodule structure of a family of vertex operator algebras in rational levels (see [Z2]). 
As an application of this increasing filtration of $O_q$, 
we show that the cocommutative elements of $O_q$ 
are linear combinations of the ``traces" of modules from $\mathscr C_f$, 
moreover as an algebra, 
it is isomorphic to the Grothendieck ring of $\mathscr C_f$ extended to the field $\Q(q)$. 

The paper is organized as follows: 
Section 2 gives an overview of the quantized enveloping algebra $\mathbf U$, 
the quantum coordinate algebra $\mathbf O$, 
Lusztig's $\mathscr A$-form $U$ of $\mathbf U$, 
the corresponding $\mathscr A_0$-form $O$ of $\mathbf O$, 
and their specializations $U_q, O_q$ at a root of unity.  
In particular we describe $O$ for type $A$ (Proposition \ref{O(sl_n)}),  
and identify $O_q$ with the matrix coefficients of finite dimensional $U_q$-modules
(Proposition \ref{O_q}). 
In Section 3, we describe an increasing filtration of $O_q$ using tilting modules 
(Theorem \ref{maintheorem2}), 
and compute the $0$-th Hochschild cohomology of $O_q$ as a coalgebra 
(Proposition \ref{Hochschild}). 
We treat the $\mathfrak {sl}_2$ case more thoroughly in Section 4 
and are able to obtain more explicit results 
(Theorem \ref{maintheorem1}).

I am very grateful to my advisor Igor Frenkel for his guidance, 
and Catharina Stroppel for helpful discussions.

\section{The General Setting}

Let $(a_{ij})_{1\leq i, j \leq n-1}$ be the Cartan matrix of a simply-laced simple Lie algebra $\mathfrak g$. 
The quantized enveloping algebra $\mathbf U$ 
is the $\Q(v)$-algebra defined by 
the generators $E_i, F_i, K_i, K_i^{-1} (1 \leq i \leq n-1)$ 
and the relations 
$$K_i K_i^{-1}= K_i^{-1} K_i =1, \qquad K_iK_j = K_jK_i,$$
$$K_i E_j = v^{a_{ij}} E_j K_i, \qquad K_i F_j = v^{-a_{ij}} F_j K_i,$$
$$E_iF_j - F_j E_i = \delta_{ij} \frac{K_i- K_i^{-1}}{v-v^{-1}},$$
$$E_iE_j = E_j E_i, \qquad F_iF_j = F_j F_i, \qquad \textrm{if      } a_{ij}=0,$$
$$E_i^2  E_j -(v+v^{-1}) E_iE_jE_i + E_j E_i^2 = 0, \qquad \textrm{if     } a_{ij} = -1,$$
$$F_i^2  F_j - (v+v^{-1}) F_iF_jF_i + F_j F_i^2 = 0, \qquad \textrm{if     } a_{ij} = -1. $$

$\mathbf U$ is a Hopf algebra over $\Q(v)$ 
with comultiplication $\triangle$, counit $\varepsilon$ and antipode $S$ defined by 
$$\triangle (E_i) = E_i \otimes 1 + K_i \otimes E_i, \qquad 
\triangle (F_i) = F_i \otimes K_i^{-1} + 1\otimes F_i, \qquad 
\triangle (K_i) = K_i \otimes K_i, $$
$$\varepsilon (E_i) = \varepsilon (F_i) =0, \qquad
\varepsilon (K_i) =1, $$
$$S (E_i) = - K_i^{-1} E_i, \qquad 
S(F_i) = - F_i K_i, \qquad 
S(K_i) = K_i^{-1}. $$

Following [L3, Sect 7], 
let $\mathscr F$ be the set of all two-sided ideals $I$ in $\mathbf U$ such that 
$I$ has finite codimension 
and there exists some $r\in \N$ such that 
for any $i$ we have $\prod_{h = -r}^{r} (K_{i} - v^{h}) \in I$. 
Let $\mathbf O$ be the set of all $\Q(v)$-linear maps $f: \mathbf U \to \Q(v)$ 
such that $f|_{I} = 0$ for some $I\in \mathscr F$. 
We call $\mathbf O$ the quantum coordinate (or function) algebra, 
which is equivalent to the linear span of matrix coefficients of finite dimensional $\mathbf U$-modules 
with a weight decomposition. 
Moreover $\mathbf O$ is a Hopf algebra over $\Q(v)$, 
and there exists a perfect Hopf algebra pairing 
$\mathbf U \times  \mathbf O \to \Q(v)$. 
Since  all finite dimensional $\mathbf U$-modules are completely reducible, 
$\mathbf O$ has a classical Peter-Weyl type of decomposition as a $\mathbf U \times \mathbf U$-module. 

For type $A$, we can describe $\mathbf O$ by generators and relations. 
Set $\mathfrak g = \mathfrak {sl}_n$, 
and let $(a_{ij})_{1\leq i, j \leq n-1}$ be the Cartan matrix with 
$a_{ij} = -1$ if $| i - j | =1$; $2$ if $i=j$; $0$ otherwise.
Let $\alpha_1, \cdots, \alpha_{n-1}$ be the simple roots of $\mathfrak {sl}_n$ 
associated to $(a_{ij})_{1\leq i, j\leq n-1}$,  
and let $\omega_1, \cdots, \omega_{n-1}$ be the corresponding fundamental weights. 
Let $V$ denote the quantization of the $n$-dimensional natural representation of $\mathfrak {sl}_n$ 
with highest weight $\omega_1$. 
Fix a highest weight vector $x_1 \in V$, 
and set $x_{i+1} = F_i x_i$ for all $1\leq i \leq n-1$. 
Then $x_i$ has weight $\omega_i - \omega_{i-1}$ 
(with the convention $\omega_0 = \omega_n =0$), 
and $\{ x_1, \cdots, x_n\}$ form a $\Q(v)$-basis of $V$
with $E_i x_{i+1} = x_i$. 
Let $\{\delta_1, \cdots, \delta_n\}$ be the dual basis in $V^*$, 
and define $X_{ij} \in \mathbf U^*$ 
by $X_{ij} (u) = \delta_i ( u \cdot x_j)$ for any $u\in \mathbf U$. 
These functionals $X_{ij}$ belong to $\mathbf O$ and they satisfy the following relations: 
$$X_{il} X_{jl} - v X_{jl} X_{il} =0 \qquad \text{for all    } l, i<j$$
$$X_{li} X_{lj} - v X_{lj} X_{li} =0 \qquad \text{for all    } l, i<j$$
$$X_{li} X_{mj} - X_{mj} X_{li} =0 \qquad \text{if    } l<m \text{  and    } i>j$$
$$X_{li} X_{mj} - X_{mj} X_{li} - (v - v^{-1}) X_{lj} X_{mi} = 0 \qquad \text{if    } l<m \text{  and    } i<j$$
$$\sum_{\sigma \in S_n} (-v)^{ l (\sigma) } X_{\sigma (1) 1} \cdots X_{ \sigma(n) n} = 1$$
where $l (\sigma)$ denotes the length of the permutation. 
In fact $\mathbf O$ is generated by $X_{ij}, 1\leq i, j \leq n$ subject to the above relations (see [D], [T]). 

Set $\mathscr A = \Z [v, v^{-1}]$. 
Given $n\in \Z$, $m\in \N$, we define
$ [ n ] = \frac{v^n - v^{-n}} { v- v^{-1}} \in \mathscr A $, 
$ [ m] ! = [ m ] [ m - 1 ] \cdots [1] $ and 
$  \left[ \begin{array}{c} n\\ m \end{array} \right] 
= \prod_{j=1}^{m} \frac{ v^{n-j+1} - v^{- n+j -1}} {v^j - v^{-j}} \in \mathscr A $.
Following [L1, L3], 
let $U$ be the $\mathscr A$-subalgebra of $\mathbf U$
generated by the elements $E_i^{(N)} = E_i^N / [N]!$, $F_i^{(N)} = F_i^N / [N]!$, 
$K_i$, $K_i^{-1}$ ($1\leq i \leq n-1, N \geq 0$). 
Then $U$ is a free $\mathscr A$-module 
and is itself a Hopf algebra over $\mathscr A$ in a natural way. 
Let $U^+, U^-, U^0$ be the $\mathscr A$-subalgebras of $U$ 
generated by the elements $E_i^{(N)}$; $F_i^{(N)}$; 
$K_i^{\pm 1}$, $\left[ \begin{array}{c} K_i; c\\t \end{array} \right]$,  
where
$$\left[ \begin{array}{c} K_i; c\\t \end{array} \right] 
= \prod_{s=1}^{t} \frac{ K_i v^{c -s+1} - K_i^{-1} v^{ -c +s -1}} { v^s - v^{-s}}, $$
then multiplication induces an isomorphism of $\mathscr A$-modules: 
$U^- \otimes U^0 \otimes U^+ \cong U$. 

We follow [L1] to construct an $\mathscr A$-basis of $U$. 
Let $s_i, 1\leq i \leq n-1$ be the simple reflections in the Weyl group $\mathcal W = S_n$ of $\mathfrak {sl}_n$. 
Let $R$, $R^+$ denote the root system and positive roots. 
Set $\alpha_{ij} = s_j s_{j-1} \cdots s_{i+1} \alpha_i = \sum_{k=i}^j \alpha_k \in R^+$ 
for any $1\leq i < j \leq n-1$, 
and consider the following total order on $R^+$: 
$\alpha_{n-1} < \alpha_{n-2, n-1} <  \cdots < \alpha_{1, n-1} 
<\alpha_{n-2} < \alpha_{n-3, n-2} < \cdots < \alpha_{1, n-2} 
< \cdots < \alpha_2 < \alpha_{12} < \alpha_1$. 
Let $\Omega: \mathbf U \to \mathbf U^{ \text{opp} }$ 
be the $\Q$-algebra isomorphism and 
$T_i: \mathbf U \to \mathbf U$, $1\leq i \leq n-1$ 
be the $\Q(v)$-algebra isomorphism defined in [L1, Sect 1]. 
Set $E_{ij} = T_j T_{j-1} \cdots T_{i+1} E_i$, 
and define for any $\phi, \phi' \in \N^{R^+}$, 
$$E^\phi = \prod_{\beta \in R^+} E_{\beta}^{( \phi (\beta) )}, \qquad F^{\phi'} = \Omega (E^{\phi'}), $$
where $E_{\alpha_i} = E_i$, $E_{\alpha_{ij}} = E_{ij}$, 
$E_\beta^{(N)} = E_\beta^N / [N]!$ 
and the factors in $E^\phi$ are written in the given order of $R^+$. 
Then the elements $E^\phi$; $F^{\phi'}$; 
$\prod_{i=1}^{n-1} K_i^{\delta_i} \left[ \begin{array}{c} K_i; 0\\t_i \end{array} \right] $, 
$t_i \geq 0, \delta_i = 0 \text{  or  } 1$
form an $\mathscr A$-basis of $U^+$; $U^-$; $U^0$ respectively. 
Hence the elements $F^{\phi'} K  E^\phi $, 
with $K$ in the above $\mathscr A$-basis of $U^0$, 
form an $\mathscr A$-basis of $U$. 
They also form a $\Q(v)$-basis of $\mathbf U$, 
hence we have an isomorphism of $\Q(v)$-algebras: 
$U \otimes_{\mathscr A} \Q(v) \cong \mathbf U$. 

Again following [L3, Sect 7], 
set $\mathscr A_0 = \Q [v, v^{-1}]$ 
and $U_{\mathscr A_0} = U \otimes_{\mathscr A} \mathscr A_0$. 
Let $U_{\mathscr A_0}^*$ be the set of all $\mathscr A_0$-linear maps 
$U_{\mathscr A_0} \to \mathscr A_0$
and let $O = \mathbf O \cap U_{\mathscr A_0}^*$. 
Then $O$ is a Hopf algebra over $\mathscr A_0$, 
and the inclusion $O \hookrightarrow \mathbf O$
induces an isomorphism of Hopf $\Q(v)$-algebras: 
$ O \otimes_{\mathscr A_0} \Q(v) \cong \mathbf O$. 
Let $\mathscr M$ be a $U_{\mathscr A_0}$-module, 
which is a free $\mathscr A_0$-module of finite rank 
with a basis in which the operators 
$K_i, \left[ \begin{array}{c} K_i; 0\\t_i \end{array} \right]$ 
act by diagonal matrices with eigenvalues 
$v^m, \left[ \begin{array}{c} m \\ t \end{array} \right]$. 
For any $m \in \mathscr M$ 
and $\xi \in \text{Hom}_{\mathscr A_0} ( \mathscr M, \mathscr A_0 )$, 
the matrix coefficient 
$c_{m, \xi}: u \to \xi (u \cdot m)$, an element of $U_{\mathscr A_0}^*$, belongs to $O$. 
Moreover $O$ is exactly the $\mathscr A_0$-submodule of $U_{\mathscr A_0}^*$ 
spanned by the matrix coefficients $c_{m, \xi}$ for various $\mathscr M, m, \xi$ as above. 

Another integral form of $\mathbf U$, denoted by $\Gamma (\mathfrak g)$, 
was introduced in [CL]. 
By definition $\Gamma (\mathfrak g)$ is 
the $\mathscr A_0$-subalgebra of $\mathbf U$ 
generated by $E_i^{(N)}, F_i^{(N)}, K_i ^{\pm 1}, \left( \begin{array}{c} K_i; c\\t \end{array} \right) $, 
where 
$ \left( \begin{array}{c} K_i; c\\t \end{array} \right) = \prod_{s=1}^t \frac{ K_i v^{c-s+1} -1} {v^s -1}$.
This algebra is larger than $U_{\mathscr A_0}$ because
its Cartan part $\Gamma (\mathfrak t)$ is larger than that of  $U_{\mathscr A_0}$. 
The elements 
$\prod_{i=1}^{n-1} \left( \begin{array}{c} K_i; 0 \\ t_i \end{array} \right) K_i ^{ - [t_i /2] }$ 
($ t_i \geq 0$)  
form an $\mathscr A_0$-basis of $\Gamma (\mathfrak t)$, 
where the Gauss symbol $[x]$ denotes the largest integer that is not greater than $x$. 
Let $\mathscr C$ be the full subcategory of $\Gamma (\mathfrak g)$-modules, 
which is free of finite rank as an $\mathscr A_0$-module and has 
a basis in which the operators $K_i, \left( \begin{array}{c} K_i; 0 \\ t \end{array} \right)$
act by diagonal matrices with eigenvalues $v^m, \left( \begin{array}{c} m \\ t \end{array} \right)$, 
where  $\left( \begin{array}{c} m \\ t  \end{array} \right) = \prod_{s=1}^t \frac{ v^{m-s+1} -1 }{ v^s -1 }$. 
Then the dual of $\Gamma (\mathfrak g)$, 
defined to be the linear span of matrix coefficients of modules from $\mathscr C$, 
coincides with $O$ ([CL, Remark 4.1]). 

Recall that for $\mathfrak g = \mathfrak {sl}_n$, 
the quantum coordinate algebra $\mathbf O$ is generated by 
$X_{ij}, 1\leq i, j \leq n$ over $\Q(v)$ subject to some relations. 
We want to show that its subalgebra $O$ 
is generated by $X_{ij}, 1\leq i, j\leq n$ over $\mathscr A_0$.  
It generalizes Proposition 1.3 of [CL], 
and the proof written below is pure calculation. 

\begin{prop} \label{O(sl_n)}
The $\mathscr A_0$-subalgebra $O$ of $\mathbf O$ is generated by $X_{ij}, 1\leq i, j \leq n$. 
\end{prop}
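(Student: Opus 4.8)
The plan is to show two inclusions. The inclusion $\langle X_{ij}\rangle_{\mathscr A_0} \subseteq O$ is immediate: each $X_{ij}$ is a matrix coefficient of the $U_{\mathscr A_0}$-lattice $V_{\mathscr A_0}$ spanned by $x_1,\dots,x_n$, and $O$ is closed under multiplication, so every monomial in the $X_{ij}$ lies in $O$. The content is the reverse inclusion $O \subseteq \langle X_{ij}\rangle_{\mathscr A_0}$. Here I would exploit the known structure of $\mathbf O$: since $\mathbf O$ is generated by the $X_{ij}$ over $\Q(v)$ and $O\otimes_{\mathscr A_0}\Q(v)\cong \mathbf O$, every element $f\in O$ can be written as a $\Q(v)$-linear combination of monomials $X_{i_1 j_1}\cdots X_{i_r j_r}$; the task is to show the coefficients can be taken in $\mathscr A_0$.

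**Reduction to a matrix-coefficient computation.** The key reduction: by the description of $O$ recalled just before the proposition, $O$ is spanned over $\mathscr A_0$ by matrix coefficients $c_{m,\xi}$ of modules $\mathscr M$ in the category $\mathscr C$ (free finite-rank $\mathscr A_0$-modules with the appropriate $K_i$-eigenvalue condition). So it suffices to prove that for each such $\mathscr M$, every matrix coefficient $c_{m,\xi}$ is an $\mathscr A_0$-combination of monomials in the $X_{ij}$. A natural approach is to realize $\mathscr M$ inside a tensor power of $V_{\mathscr A_0}$ and its dual. Concretely, matrix coefficients of $V^{\otimes a}\otimes (V^*)^{\otimes b}$ are (up to the antipode, which sends $X_{ij}$-monomials to $X_{ij}$-monomials via the quantum determinant relation) products of the $X_{ij}$; and the submodule/quotient structure lets one descend these to $\mathscr M$. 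The point is that over $\Q(v)$ every irreducible $\mathbf U$-module, hence every module in the relevant category, embeds in such a mixed tensor power, and one needs the integral version: $\mathscr M$, or at least enough of its matrix coefficients, comes from a $U_{\mathscr A_0}$-stable lattice in such a tensor power.

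**The main obstacle.** The hard part will be the integrality bookkeeping — passing from the $\Q(v)$-statement to the $\mathscr A_0$-statement without losing control of denominators. The subtlety is that a matrix coefficient $c_{m,\xi}$ of $\mathscr M\in\mathscr C$ that is visibly in $O$ need not obviously be expressible using only the lattice $V_{\mathscr A_0}^{\otimes a}\otimes (V_{\mathscr A_0}^*)^{\otimes b}$; one must check that the projection/inclusion maps relating $\mathscr M$ to the tensor power can be chosen (or rescaled) to be defined over $\mathscr A_0$ and to interact well with the pairing, so that no factor of $[N]!$ or $1/(v^s-v^{-s})$ sneaks in. In the $\mathfrak{sl}_n$ case this is where the specific relations among the $X_{ij}$ (the quantum Plücker-type relations and the quantum determinant $=1$) get used: they are precisely what guarantees that $O$, generated by $X_{ij}$, is already large enough to contain all these descended matrix coefficients. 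I expect the proof to proceed by an explicit induction on the rank $a+b$ of the tensor power, using the relations to rewrite any matrix coefficient in a normal form, and then checking by direct calculation — as the author warns, ``the proof written below is pure calculation'' — that the coefficients in this normal form lie in $\mathscr A_0$.
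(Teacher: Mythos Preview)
Your strategy and the paper's are genuinely different, and the gap you flag as ``the main obstacle'' is real and unresolved in your outline. You want to realize every lattice $\mathscr M$ in the category $\mathscr C$ as a $U_{\mathscr A_0}$-sublattice of some $V_{\mathscr A_0}^{\otimes a}\otimes (V_{\mathscr A_0}^*)^{\otimes b}$, then read off its matrix coefficients as $\mathscr A_0$-combinations of monomials in the $X_{ij}$. Over $\Q(v)$ such an embedding exists by semisimplicity, but over $\mathscr A_0$ there is no reason the particular lattice $\mathscr M$ sits inside the particular lattice $V_{\mathscr A_0}^{\otimes a}\otimes (V_{\mathscr A_0}^*)^{\otimes b}$, nor that the image is an $\mathscr A_0$-direct summand so that functionals extend without denominators. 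Proving such an integral embedding theorem is at least as hard as the proposition itself; your suggested ``induction on $a+b$ using the relations'' does not touch this point, since the relations among the $X_{ij}$ only let you normalize monomials, not manufacture an integral embedding of an arbitrary $\mathscr M$.

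The paper sidesteps the module theory entirely with a dual-pairing (coefficient-extraction) argument. It fixes the ordered-monomial basis $\{X^M:M\in\Xi\}$ of $\mathbf O$, writes an arbitrary $f\in O$ as $\sum_M \gamma_M X^M$ with $\gamma_M\in\Q(v)$, and shows that at least one nonzero $\gamma_M$ lies in $\mathscr A_0$ (whence an induction on the size of the support finishes). To isolate a single coefficient, it evaluates $f$ on a carefully chosen PBW monomial $F^\phi K' E^\psi\in U_{\mathscr A_0}$: the exponents $\phi,\psi$ are built from successive minima of the off-diagonal entries $r_{ij}$ over the support of $f$ (matched to the fixed ordering of $R^+$), so that only monomials with exactly those off-diagonal entries survive; then $K'\in U^0$ is chosen to separate the remaining diagonal characters $\chi_{\lambda+\mu_M}$, killing all but one term $M_0$. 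The upshot is $f(F^\phi K' E^\psi)=\gamma_{M_0}v^{n_{M_0}}$, and since $f\in O$ means $f(U_{\mathscr A_0})\subset\mathscr A_0$, this forces $\gamma_{M_0}\in\mathscr A_0$. The ``pure calculation'' the author warns about is this bookkeeping of minima and the verification that the PBW monomials pair against $X^M$ as claimed---not an induction through tensor powers.
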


\begin{proof}
Let $\Xi$ be the  set of all matrices $M = (r_{ij})_{1\leq i, j\leq n}$ 
such that $r_{ij} \in \N$ and at least one of $r_{11}, \cdots, r_{nn}$ is zero. 
Fix a total order on $\{1, \cdots, n \}^2$, 
and set $X^M = \prod_{ij} X_{ij}^{ r_{ij} } $  for any $M \in \Xi$. 
Then $\{ X^M, M\in \Xi \}$ form a $\Q(v)$-basis of $\mathbf O$.  
Any element $f \in O$ can be represented uniquely as 
$f = \sum_{M \in \Xi} \gamma_M X^M$ with $\gamma_M \in \Q(v)$. 
Since $X_{ij} \in O$ for all $1 \leq i, j \leq n$, 
it suffices to prove that one of the nonzero coefficients $\gamma_M$ belongs to $\mathscr A_0$.   
Define a set of nonnegative integers inductively as follows: 
$$s_{n1} = \text{min} \{ r_{n1} | \gamma_{M = (r_{ij})} \neq 0 \}$$
$$s_{n-1, 1} = \text{min} \{ r_{n-1, 1} | \gamma_{M = (r_{ij})} \neq 0, r_{n1} = s_{n1} \}$$
$$\cdots$$
$$s_{21} = \text{min} \{ r_{21} | \gamma_{M = (r_{ij})} \neq 0, r_{i1} = s_{i1}, 2 < i \leq n \}$$
$$s_{n2} = \text{min} \{ r_{n2} | \gamma_{M = (r_{ij})} \neq 0, r_{i1} = s_{i1}, 1 < i \leq n \} $$
$$\cdots$$
$$s_{32} = \text{min} \{ r_{32} | \gamma_{M = (r_{ij})} \neq 0, r_{j2} = s_{j2}, r_{i1} = s_{i1}, 
3 < j \leq n, 1 < i \leq n \} $$
$$\cdots$$
$$s_{n, n-1} = \text{min} \{ r_{n, n-1} | \gamma_{M = (r_{ij})} \neq 0, r_{ij} = s_{ij}, 
1 \leq j < n-1,  j < i \leq n \}. $$
Define $\phi \in \N^{R^+}$; $\alpha_i \mapsto s_{i+1, i}, \alpha_{ij} \mapsto s_{j+1, i}$, 
then $f ( F^\phi K E ) = \sum_{M \in \Lambda} \gamma_M X^M ( F^\phi K E )$
for any $K \in U^0, E \in U^+$, 
where $\Lambda = \{ M = (r_{ij}) \in \Xi | \gamma_M \neq 0, r_{ij} = s_{ij}, 1 \leq j < i \leq n\}$. 
Similarly define another set of nonnegative integers: 
$$s_{1n} = \text{min} \{ r_{1n} | M = (r_{ij}) \in \Lambda \} $$
$$\cdots$$
$$s_{12} = \text{min} \{ r_{12} | M = (r_{ij}) \in \Lambda, r_{1i} = s_{1i}, 2 < i \leq n \}$$
$$s_{2n} = \text{min} \{ r_{2n} | M = (r_{ij}) \in \Lambda, r_{1i} = s_{1i}, 1 < i \leq n\}$$
$$\cdots$$
$$s_{23} = \text{min} \{ r_{23} | M = (r_{ij}) \in \Lambda, r_{2j} = s_{2j}, r_{1i} = s_{1i}, 3< j \leq n, 1 < i \leq n \} $$
$$\cdots$$
$$s_{n-1, n} = \text{min} \{ r_{n-1, n} | M = (r_{ij}) \in \Lambda, r_{ij} = s_{ij}, 
1 \leq i < n-1, i < j \leq n \}. 
$$
Define $\psi \in \N^{R^+}$; $\alpha_i \mapsto s_{i, i+1}, \alpha_{ij} \mapsto s_{i, j+1} $, 
then $f ( F^\phi K E^\psi ) = \sum_{M \in \Upsilon} \gamma_M X^M ( F^\phi K E^\psi )$
for any $K \in U^0$, 
where $\Upsilon = \{ M = (r_{ij}) \in \Lambda | r_{ij} = s_{ij}, 1 \leq i < j \leq n \}$. 
In other words, 
$\Upsilon$ is the collection of matrices $M \in \Xi$
whose non-diagonal entries are $s_{ij}$'s 
and the coefficient of $X^M$ in $f \in O$ is nonzero, 
moreover 
$f ( F^\phi K E^\psi ) = \sum_{M \in \Upsilon} \gamma_M v^{n_M} \chi_{\lambda + \mu_M} (K)$
for some $n_M \in \Z$, 
where $\lambda = \sum_{i > j} s_{ij} ( \omega_j - \omega_{j-1} ) 
+ \sum_{i < j} s_{ij} ( \omega_i - \omega_{i-1} )$ and 
$\mu_M = \sum_{i=1}^{n-1} (r_{ii} - r_{i+1, i+1} ) \omega_i$ 
for $M = (r_{ij}) \in \Upsilon$. 
Here the character $\chi_{\nu}: U^0 \to \mathscr A$ 
associated to a weight 
$\nu = ( \nu_i ), \nu_i = \langle \nu, \alpha_i^\vee \rangle$ 
is defined by 
$\chi_{\nu} ( K_i^{\pm 1} ) = v^{\pm \nu_i}$, 
$\chi_{\nu} ( \left[ \begin{array}{c} K_i; c \\ t \end{array} \right] ) = \left[ \begin{array}{c} \nu_i + c \\ t \end{array} \right]$.  
Note that $\mu_M = \mu_{M'}$ if and only if $M = M'$. 
Assume the diagonal entries of $M$ and $M'$ 
are $(r_{11}, \cdots, r_{nn})$ and $( r_{11}', \cdots, r_{nn}' )$ respectively, 
then $\mu_M = \mu_{M'}$ iff $r_{ii} - r_{i+1, i+1} = r_{ii}' - r_{i+1, i+1}'$, 
which is equivalent to $r_{ii} - r_{ii}' = c, 1 \leq i \leq n$ for some constant $c$. 
Since $r_{ii}, r_{ii}' \in \N$ and at least one from each group is zero, 
$c$ must be zero. 
Now that the characters $\chi_{\lambda + \mu_M}, M \in \Upsilon$ are all different, 
there exist $K' \in U^0, M_0 \in \Upsilon$ 
such that $\chi_{\lambda + \mu_M} (K') = 1$
if $M = M_0$; $0$ otherwise. 
Hence $f ( F^\phi K' E^\psi ) = \gamma_{M_0} v^{n_{M_0}} \in \mathscr A_0$, 
hence $\gamma_{M_0} \in \mathscr A_0$. 
\end{proof}

Let $\ell \geq n$ (the Coxeter number of $\mathfrak{sl}_n$) be an odd integer,
and let $q$ be a primitive $\ell$-th root of $1$. 
Let $p_\ell (v)$ denote the $\ell$-th cyclotomic polynomial, 
then we have an isomorphism of fields $\mathscr A_0 / (p_\ell (v)) \cong \Q(q)$. 
Set $U_q = U_{\mathscr A_0} \otimes_{\mathscr A_0} \Q(q)$ and 
$O_q = O \otimes_{\mathscr A_0} \Q(q)$. 
They are both Hopf algebras over $\Q(q)$ 
and inherit the comultiplications, counits and antipodes 
from $U_{\mathscr A_0}$ and $O$ respectively.
We denote the images of 
$E_i^{(N)}, F_i^{(N)}, K_i^{\pm 1}, \left[ \begin{array}{c} K_i; c \\ t \end{array} \right] \in U_{\mathscr A_0}$
in $U_q$ by the same notations.

\begin{prop}
There is a pairing of Hopf algebras $(, ): U_q \times O_q \to \Q(q)$, 
and it induces an embedding $O_q \hookrightarrow U_q^*$. 
\end{prop}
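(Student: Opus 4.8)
The plan is to build the pairing over the ground ring $\mathscr A_0$, specialize it at $v=q$, and then read off the injectivity directly from the definition $O=\mathbf O\cap U_{\mathscr A_0}^*$.

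\emph{The pairing over $\mathscr A_0$.} I would start from the perfect Hopf algebra pairing $\langle\,,\,\rangle\colon\mathbf U\times\mathbf O\to\Q(v)$ recalled above. Since $U_{\mathscr A_0}$ is a Hopf $\mathscr A_0$-subalgebra of $\mathbf U$ and $O$ is a Hopf $\mathscr A_0$-subalgebra of $\mathbf O$, and since by the definition $O=\mathbf O\cap U_{\mathscr A_0}^*$ every $f\in O$ satisfies $f(U_{\mathscr A_0})\subseteq\mathscr A_0$, the restriction of $\langle\,,\,\rangle$ is an $\mathscr A_0$-bilinear map $(\,,\,)_0\colon U_{\mathscr A_0}\times O\to\mathscr A_0$. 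All the Hopf-pairing identities (compatibility with multiplication, comultiplication, unit, counit and antipode) are universally quantified equalities, so they persist under restriction to the Hopf subalgebras; hence $(\,,\,)_0$ is a Hopf pairing of $\mathscr A_0$-Hopf algebras.

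\emph{Specialization.} Next I would apply $-\otimes_{\mathscr A_0}\Q(q)$, using $\Q(q)\cong\mathscr A_0/(p_\ell(v))$ together with the canonical isomorphism $(U_{\mathscr A_0}\otimes_{\mathscr A_0}O)\otimes_{\mathscr A_0}\Q(q)\cong U_q\otimes_{\Q(q)}O_q$, to obtain a $\Q(q)$-bilinear map $(\,,\,)\colon U_q\times O_q\to\Q(q)$. Base change preserves the Hopf-pairing identities because they are identities between $\mathscr A_0$-linear maps, so $(\,,\,)$ is a Hopf pairing. It induces the $\Q(q)$-linear map $\iota_q\colon O_q\to U_q^*$, $\bar f\mapsto(\,\cdot\,,\bar f)$, and it remains to prove $\ker\iota_q=0$.

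\emph{Injectivity.} Let $\bar f\in\ker\iota_q$ and lift it to $f\in O\subset\mathbf O$. Since $U_q=U_{\mathscr A_0}\otimes_{\mathscr A_0}\Q(q)$, the condition $\bar f\in\ker\iota_q$ amounts to $\langle u,f\rangle\in p_\ell(v)\mathscr A_0$ for every $u\in U_{\mathscr A_0}$. Set $g:=p_\ell(v)^{-1}f$, an element of the $\Q(v)$-vector space $\mathbf O$. Then $g(u)\in\mathscr A_0$ for all $u\in U_{\mathscr A_0}$, so the restriction of $g$ to $U_{\mathscr A_0}$ is an $\mathscr A_0$-linear map $U_{\mathscr A_0}\to\mathscr A_0$; that is, $g\in\mathbf O\cap U_{\mathscr A_0}^*=O$. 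Hence $f=p_\ell(v)\,g\in p_\ell(v)O$, so $\bar f=0$ in $O_q=O/p_\ell(v)O$, proving that $\iota_q$ is injective.

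\emph{Where the difficulty lies.} I expect no serious obstacle here: the whole argument is formal once one notes that $O$ is \emph{defined} to be the full dual $\mathscr A_0$-lattice $\mathbf O\cap U_{\mathscr A_0}^*$ of $U_{\mathscr A_0}$ inside $\mathbf O$, hence is automatically saturated with respect to divisibility detected by $U_{\mathscr A_0}$. The only points deserving a word of care are the well-definedness and Hopf-compatibility of the specialized pairing, which are best verified at the level of $\mathscr A_0$-modules before tensoring, and the fact that one obtains non-degeneracy only on the $O_q$-side (as noted in the introduction, the pairing is in general degenerate on $U_q$). In particular, this statement needs neither Proposition~\ref{O(sl_n)} nor the explicit type $A$ description of $\mathbf O$.
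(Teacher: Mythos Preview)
Your argument is correct and in fact cleaner than what the paper does. The paper's proof is essentially a citation: it defers to [CL, Lemma~6.1] (which handles the larger algebra $\Gamma_q(\mathfrak g)$ paired with $O_q$), notes that for $U_q$ the pairing is degenerate on the $U_q$-side via the explicit element $K_i^\ell-1$, and then remarks that the injectivity $O_q\hookrightarrow U_q^*$ ``can also be proved using the arguments of Proposition~\ref{O(sl_n)}'', i.e.\ via the explicit type~$A$ monomial basis and the evaluation trick with $F^\phi K' E^\psi$.

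Your route is genuinely different: you exploit directly that $O=\mathbf O\cap U_{\mathscr A_0}^*$ is the \emph{saturated} $\mathscr A_0$-lattice in $\mathbf O$ dual to $U_{\mathscr A_0}$, so that divisibility of all values $f(u)$ by $p_\ell(v)$ forces $f\in p_\ell(v)O$. This buys you a self-contained, type-independent proof that needs neither the generators-and-relations description of $\mathbf O$ nor any external reference, confirming your closing remark that Proposition~\ref{O(sl_n)} is unnecessary here. The paper's suggested alternative via Proposition~\ref{O(sl_n)} would only work in type~$A$ and requires the monomial basis $X^M$; your argument works uniformly.
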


\begin{proof}
This is basically Lemma 6.1 of [CL], 
where the specialization of the larger $\mathscr A_0$-algebra $\Gamma (\mathfrak g)$, 
i.e. $\Gamma_q = \Gamma (\mathfrak g) \otimes_{\mathscr A_0} \Q(q)$,  
is considered, 
and the pairing between 
$\Gamma_q$ and $O_q$ is non-degenerate. 
However if instead we consider the pairing between $U_q$ and $O_q$, 
it is only non-degenerate on the $O_q$-half, 
i.e. $ (u, f) = 0$ for all $u \in U_q$ implies that $f =0 \in O_q$. 
To see why it is degenerate on the $U_q$-half, 
consider the image of $K_i^\ell -1 \in U_{\mathscr A_0}$ in $U_q$, 
denoted by the same notation. 
It is not difficult to see that $( K_i^\ell -1, f ) = 0$ for all $f \in O_q$, 
but $K_i^\ell \neq 1$ in $U_q$ 
(instead $K_i^{2 \ell} = 1$ in $U_q$). 
The injectivity of the induced map $O_q \to U_q^*$ 
can also be proved using the arguments of Proposition \ref{O(sl_n)}. 
\end{proof}

The dual space $U_q^*$ admits two commuting (left) actions of $U_q$, 
which we denote by $\rho_1, \rho_2$. 
By definiton, 
$\rho_1 (u) f (u') = f( u' u)$ and 
$\rho_2 (u) f (u') = f( S(u) u')$
for any $f\in U_q^*, u, u' \in U_q$, 
where $S$ denotes the antipode of $U_q$. 
The Hopf algebra $O_q$ is a 
$U_q \times U_q$-submodule of $U_q^*$, 
and the $U_q$-actions can be expressed as follows: 
$$\rho_1 (u) g = \sum g_{(1)} (u, g_{(2)} ), \qquad 
\rho_2 (u) g = \sum (S(u), g_{(1)}) g_{(2)}, $$
for any $u\in U_q$, $g\in O_q$, 
where $\triangle (g) = \sum g_{(1)} \otimes g_{(2)}$. 
The question we want to investigate is 
how $O_q$ decomposes as a $U_q \times U_q$-module, 
i.e. as a bicomodule of itself. 

Let $V$ be a finite dimensional representation of $U_q$, and 
let $V^*$ be the dual representation defined by 
$u f (v ) = f (S(u) v)$ for any $u \in U_q, f \in V^*, v \in V$. 
It is obvious that the map 
$\phi_V: V \otimes V^* \to U_q^*; v \otimes f \mapsto f ( \cdot \, v )$ 
is a $U_q \times U_q$-morphism. 
We denote the image of $\phi_V$ by $\M(V)$, 
called the matrix coefficients of $V$. 
Usually $\phi_V$ is not injective unless $V$ is irreducible.

\begin{lemma}  \label{matrixcoefficients}
Let $V, V'$ be finite-dimensional $U_q$-modules and $U \subset V$ be a submodule, 
then we have the following: 
\begin{enumerate}
\item 
$\phi_V (U \otimes V^*) = \M (U)$ and 
$\phi_V ( V \otimes (V / U)^* ) = \M ( V / U)$. 
In particular $\M (U)$, $\M( V / U) \subset \M (V)$. 
\item
$\M ( V \otimes V' ) = \M (V) \cdot \M (V' )$ and 
$\M ( V \oplus V' ) = \M(V) + \M( V' )$. 
\item 
$\M ( V^* ) = S ( \M ( V) ) $. 
\end{enumerate}
\end{lemma}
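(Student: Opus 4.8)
The plan is to prove the three parts of Lemma \ref{matrixcoefficients} by unwinding the definitions of $\phi_V$ and of the two $U_q$-actions, reducing everything to elementary linear algebra of matrix coefficients. Throughout I write $c_{v, f} = \phi_V(v \otimes f) = f(\,\cdot\, v)$ for $v \in V$, $f \in V^*$, so that $\M(V) = \operatorname{span}\{c_{v,f} : v \in V, f \in V^*\}$.

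For part (1), the inclusion $\phi_V(U \otimes V^*) \supseteq \M(U)$ is immediate: for $u \in U$ and $g \in U^*$, extend $g$ to an element $\tilde g \in V^*$ (using that $\Q(q)$ is a field, so $U$ is a direct summand as a vector space even though it need not be as a module), and then $c_{u, \tilde g}^{\,V} = c_{u, g}^{\,U}$ since $u' \cdot u \in U$ for all $u' \in U_q$. The reverse inclusion is also immediate since any $c_{u, f}$ with $u \in U$, $f \in V^*$ equals $c_{u, f|_U}^{\,U}$. The statement for $V/U$ is dual: an element $f \in (V/U)^*$ is the same as an $f \in V^*$ vanishing on $U$, and for such $f$ one has $c_{v, f}^{\,V} = c_{\bar v, f}^{\,V/U}$ where $\bar v$ is the image of $v$; conversely $\phi_{V/U}$ factors through the surjection $V \otimes (V/U)^* \to (V/U) \otimes (V/U)^*$ coming from $V \twoheadrightarrow V/U$. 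The last sentence of (1) then follows since $U \otimes V^*$ and $V \otimes (V/U)^*$ are subspaces of $V \otimes V^*$.

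For part (2), the direct sum case is clear from $(V \oplus V')^* = V^* \oplus V'^*$ and bilinearity of $(v, f) \mapsto c_{v,f}$, together with the fact that a matrix coefficient of $V$ (resp. $V'$) against the summand $V^*$ (resp. $V'^*$) recovers exactly $\M(V)$ (resp. $\M(V')$), while the cross terms vanish. For the tensor product, the key computation is that $c_{v \otimes v',\, f \otimes f'}^{\,V \otimes V'} = c_{v,f}^{\,V} \cdot c_{v', f'}^{\,V'}$ as elements of $U_q^*$: evaluating the left side on $u \in U_q$ gives $(f \otimes f')(\triangle(u)(v \otimes v')) = \sum f(u_{(1)} v) f'(u_{(2)} v')$, which by the definition of the product on $U_q^*$ (dual to $\triangle$) is precisely $(c_{v,f}^{\,V} \cdot c_{v',f'}^{\,V'})(u)$. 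Since the simple tensors $v \otimes v'$ span $V \otimes V'$ and the $f \otimes f'$ span $(V \otimes V')^* = V^* \otimes V'^*$, this identity gives $\M(V \otimes V') = \M(V) \cdot \M(V')$ after taking spans; one should note that the product $\M(V) \cdot \M(V')$ is indeed a subspace because $O_q$ is closed under multiplication and the displayed identity already exhibits every product $c_{v,f} c_{v',f'}$ as a single matrix coefficient, so no span-closure subtlety remains.

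For part (3), I would compute $S(c_{v,f}^{\,V})$ directly. By the definition of the antipode on $U_q^*$ (which is $\langle S(\xi), u\rangle = \langle \xi, S(u)\rangle$), one has $S(c_{v,f}^{\,V})(u) = c_{v,f}^{\,V}(S(u)) = f(S(u)v)$. On the other hand, by the definition of the dual module $V^*$, the action of $u$ on $f \in V^*$ is $(uf)(v) = f(S(u)v)$, and the natural pairing makes $V$ canonically a submodule of $(V^*)^*$ via $v \mapsto \operatorname{ev}_v$; then $c_{f, \operatorname{ev}_v}^{\,V^*}(u) = \operatorname{ev}_v(u \cdot f) = (uf)(v) = f(S(u)v)$. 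Hence $S(c_{v,f}^{\,V}) = c_{f, \operatorname{ev}_v}^{\,V^*} \in \M(V^*)$, giving $S(\M(V)) \subseteq \M(V^*)$; applying this to $V^*$ in place of $V$ and using $S$ bijective (with $V$ a summand of $V^{**}$, so $\M(V^{**}) \supseteq \M(V)$, together with the reverse containment one gets from the finite-dimensionality making $V^{**} \cong V$) yields equality. The main obstacle — really the only point requiring care rather than mere bookkeeping — is keeping the sign/side conventions for $S$, $\triangle$, and the induced actions $\rho_1, \rho_2$ consistent throughout, since a misplaced antipode in any of the three parts would break the identities; once the conventions fixed in the excerpt are substituted in verbatim, each part collapses to a one-line verification on generators.
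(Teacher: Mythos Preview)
Your argument is correct and follows the same route as the paper's own proof: both reduce everything to the fact that the product and antipode on $U_q^*$ are the transposes of the comultiplication and antipode on $U_q$, so that parts (1)--(3) become direct definition-chases; the paper compresses this into two sentences while you spell out each computation. The only slight wobble is in (3), where $v \mapsto \operatorname{ev}_v$ is not literally a $U_q$-module embedding (it intertwines the $V$-action with the $S^2$-twisted action on $V^{**}$), but your identity $S(c_{v,f}^V) = c_{f,\operatorname{ev}_v}^{V^*}$ is correct regardless, and the reverse inclusion is cleanest via $\M(V^{**}) = S^2(\M(V))$ rather than an appeal to $V^{**}\cong V$.
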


\begin{proof}
(1) is easy to prove:  
in terms of matrix representations, 
$\M ( U )$ and $ \M ( V / U )$ are the matrix coefficients in the diagonal blocks. 
Also note that the multiplication and the map $S$ of $U_q^*$ 
are defined by taking the transposes of 
the comultiplication and the antipode of $U_q$.
Hence (2) and (3) follow. 
\end{proof}

We have a triangular decomposition 
$U_q = U_q^- U_q^0 U_q^+$, 
where $U_q^- = U^- \otimes_{\mathscr A} \Q(q)$ 
and similar definitions for $U_q^0$ and $U_q^+$. 
Denote by $X, X^+$ the weight lattice 
and the dominant weights of $\mathfrak g$. 
For $\lambda \in X$, 
the character $\chi_\lambda: U^0 \to \mathscr A$ 
induces a character of $U_q^0$ to $\Q(q)$: 
$K_i^{\pm 1} \mapsto q^{\pm \langle \lambda, \alpha_i^\vee \rangle}$, 
$\left[ \begin{array}{c} K_i; c \\ t \end{array} \right]  \mapsto \left[ \begin{array}{c} \langle \lambda, \alpha_i^\vee \rangle + c \\ t \end{array} \right]_q$, 
where the subscript $q$
means evaluating an element of $\Z[v, v^{-1}]$ at $v = q$. 
Let $\mathscr C_f$ be the category of 
finite dimensional $U_q$-modules
with a weight decomposition with respect to $U_q^0$. 
We will show that 
$O_q \subset U_q^*$ 
is precisely the linear span of matrix coefficients of modules from $\mathscr C_f$. 
To prove it, let's first recall some important modules in $\mathscr C_f$. 
For any dominant weight $\lambda \in X^+$, 
we can associate four canonical modules: 
the Weyl module $V_\lambda$, 
the dual of the Weyl module $V_\lambda^*$, 
the irreducible module $L_\lambda$
and the tilting module $T_\lambda$. 

The Weyl module $V_\lambda$ 
is generated by a vector of highest weight $\lambda$, 
and has the universal property that 
any module in $\mathscr C_f$ 
generated by a vector of highest weight $\lambda$ 
is a quotient of $V_\lambda$. 
The character of $V_\lambda$ 
is given by Weyl's character formula 
(see [APW, A2] for definitions of $V_\lambda$ and $V_\lambda^*$
in terms of some induction functor and its derived functors). 
We have the following property for these standard objects: 
$\text{Ext}_{\mathscr C_f}^i ( V_\lambda, V_\mu^* ) = \Q(q)$ 
if $i = 0$ and $\lambda = - \omega_0 \mu$; 
$0$ otherwise, where $\omega_0$ 
is the longest element in the Weyl group.  

The irreducible module $L_\lambda$ 
is the head of $V_\lambda$ 
as well as the socle of $V_{ - \omega_0 \lambda}^*$, 
and $L_\lambda^* \cong L_{ - \omega_0 \lambda}$. 
Furthermore the modules $L_\lambda, \lambda\in X^+$ 
give a complete list of non-isomorphic irreducible modules in $\mathscr C_f$. 

A module in $\mathscr C_f$ is called tilting 
if it admits both a Weyl filtration and a dual Weyl filtration. 
Tilting modules are closed by taking the dual and the tensor product. 
For each $\lambda \in X^+$, 
there exists a unique (up to isomorphism) 
indecomposable tilting module $T_\lambda$
such that $T_\lambda$ 
admits a Weyl filtration starting with $V_\lambda \hookrightarrow T_\lambda$, 
and any other Weyl modules $V_\mu$ 
entering the Weyl filtration of $T_\lambda$ satisfy that  $\mu < \lambda$, 
here $\leq$ is the usual partial order on $X$ 
determined by a set of positive roots. 
The highest weight $\lambda$ 
occurs with multiplicity $1$ in $T_\lambda$. 
By consideration of characters, 
we have  $T_\lambda^* \cong T_{ - \omega_0 \lambda }$. 
Moreover the modules $T_\lambda, \lambda \in X^+$
form a complete list of inequivalent indecomposable tilting modules. 
It is easy to see that $T_\lambda = V_\lambda$ 
if and only if $V_\lambda$ is irreducible. 
There are enough projectives in $\mathscr C_f$
and all projective modules are tilting (see [APW], [A2]). 




\begin{prop} \label{O_q}
$O_q \subset U_q^*$ is the linear span of matrix coefficients of modules from $\mathscr C_f$, 
i.e. $O_q = \sum_{V \in \mathscr C_f} \M ( V)$. 
\end{prop}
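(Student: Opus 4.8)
The plan is to prove the two inclusions $\sum_{V \in \mathscr C_f} \M(V) \subset O_q$ and $O_q \subset \sum_{V \in \mathscr C_f} \M(V)$ separately, using the integral structure of $O$ and $U_{\mathscr A_0}$ as the main bridge.

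For the inclusion $\sum_{V \in \mathscr C_f} \M(V) \subset O_q$, I would first reduce to the case where $V$ is a Weyl module $V_\lambda$ (or its dual), since by Lemma \ref{matrixcoefficients}(2) and the fact that every $V \in \mathscr C_f$ has a finite composition series whose subquotients embed in the known highest weight modules, $\M(V)$ is contained in a sum of products of matrix coefficients of standard modules. Actually the cleanest route is: every $V \in \mathscr C_f$ is a subquotient of a tensor power of $V \oplus V^*$ where $V$ is the natural module, hence by Lemma \ref{matrixcoefficients} it suffices to show $\M(V) \subset O_q$ for $V$ the natural module and its dual — but $\M(V)$ for the natural module is spanned by the $X_{ij}$, which lie in $O_q$ by construction, and $\M(V^*) = S(\M(V)) \subset O_q$ since $O_q$ is closed under the antipode. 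Then closure of $O_q$ under multiplication finishes this direction. I would spell out the reduction to tensor powers of the natural module by invoking that the natural module is a tilting generator in type $A$ (every indecomposable tilting module, and hence every object of $\mathscr C_f$ up to the operations in Lemma \ref{matrixcoefficients}, appears as a summand/subquotient of $V^{\otimes k} \otimes (V^*)^{\otimes m}$).

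For the reverse inclusion $O_q \subset \sum_{V \in \mathscr C_f} \M(V)$, I would use that $O = \mathbf O \cap U_{\mathscr A_0}^*$ is, by the discussion preceding Proposition \ref{O(sl_n)}, spanned over $\mathscr A_0$ by matrix coefficients $c_{m,\xi}$ of $U_{\mathscr A_0}$-lattices $\mathscr M$ that are free of finite rank with the required diagonal action of the Cartan part. Reducing modulo $p_\ell(v)$, such a lattice $\mathscr M$ specializes to a finite-dimensional $U_q$-module $\mathscr M_q = \mathscr M \otimes_{\mathscr A_0} \Q(q)$ with a weight decomposition with respect to $U_q^0$, i.e. an object of $\mathscr C_f$; and the specialized matrix coefficient $c_{m,\xi} \otimes 1$ is exactly a matrix coefficient of $\mathscr M_q$, hence lies in $\M(\mathscr M_q)$. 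Taking $\Q(q)$-spans gives $O_q \subset \sum_{V} \M(V)$. The only subtlety is to verify that the specialization $\mathscr M_q$ genuinely lies in $\mathscr C_f$ (weight decomposition survives specialization because the $K_i$ act semisimply with eigenvalues $v^m$ specializing to $q^m$, and finite-dimensionality is immediate) and that the matrix-coefficient map is compatible with base change, which is formal once one notes $\mathrm{Hom}_{\mathscr A_0}(\mathscr M, \mathscr A_0) \otimes_{\mathscr A_0} \Q(q) \cong \mathrm{Hom}_{\Q(q)}(\mathscr M_q, \Q(q))$ for $\mathscr M$ free of finite rank.

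The main obstacle I anticipate is the first inclusion — specifically, justifying cleanly that every object of $\mathscr C_f$ can be built, via the operations $\oplus$, $\otimes$, subquotient, and dual, from the natural module, so that Lemma \ref{matrixcoefficients} reduces everything to the $X_{ij}$. In type $A$ this is true and well known (it underlies the description of $\mathbf O$ by generators and relations in terms of the $X_{ij}$), but one should either cite it or argue it via highest weight theory: $L_\lambda$ for $\lambda = \sum a_i \omega_i$ is a subquotient of $\bigotimes L_{\omega_i}^{\otimes a_i}$, each $L_{\omega_i}$ is a subquotient of $\Lambda^i V$ hence of $V^{\otimes i}$, and dualizing handles antidominant twists. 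Everything else is bookkeeping with the Hopf structure and base change, already essentially contained in the setup of Section 2 and in Lemma \ref{matrixcoefficients}.
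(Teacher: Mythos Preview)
Your proposal is correct in outline, but you have inverted which direction is easy and which is hard, and your ``main obstacle'' fallback has a real gap.

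For $O_q \subset \sum_{V}\M(V)$ the paper does not go through integral lattices and base change at all: it simply invokes Proposition~\ref{O(sl_n)} to say that $O_q$ is generated as a $\Q(q)$-algebra by the $X_{ij}$, which are matrix coefficients of $V_{\omega_1}$, so $O_q=\sum_n \M(V_{\omega_1})^n=\sum_n\M(V_{\omega_1}^{\otimes n})\subset\sum_V\M(V)$ in one line. Your specialization argument works too, but it is more machinery than needed once Proposition~\ref{O(sl_n)} is available.

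For $\sum_V\M(V)\subset O_q$, your first instinct---reduce to tilting modules and use that the natural module tensor-generates---is exactly what the paper does, and the paper makes it precise as follows: every $V\in\mathscr C_f$ is a quotient of a projective, projectives are tilting, so it suffices to show $\M(T_\lambda)\subset O_q$ for all $\lambda$; each $T_\lambda$ is a direct summand of $\bigotimes_i T_{\omega_i}^{\otimes m_i}$; and in type $A$ the fundamental Weyl modules are irreducible (all weights lie in a single $\mathcal W$-orbit), so $T_{\omega_i}=V_{\omega_i}$ is a subquotient of $V_{\omega_1}^{\otimes i}$, whence $\M(T_{\omega_i})\subset\M(V_{\omega_1})^i\subset O_q$. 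Note also that $V_{\omega_1}^*\cong V_{\omega_{n-1}}$, so you do not need the dual separately.

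Your fallback route in the last paragraph, however, does \emph{not} work: knowing $\M(L_\lambda)\subset O_q$ for every simple $L_\lambda$ says nothing about $\M(V)$ for a nonsemisimple $V$, because Lemma~\ref{matrixcoefficients}(1) only gives $\M(U)+\M(V/U)\subset\M(V)$, and the reverse inclusion is false in general (the off-diagonal blocks are exactly what is missing). The reduction must pass through modules that actually surject onto or receive an injection from $V$---hence the essential role of projectives/tiltings, not composition factors.
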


\begin{proof}
By Proposition \ref{O(sl_n)}, 
the $\Q(q)$-algebra $O_q$ 
is generated by $X_{ij}$, 
the matrix coefficients of $V_{\omega_1}$. 
By Lemma \ref{matrixcoefficients} (2), we have 
$O_q = \sum_n \M ( V_{\omega_1}^{\otimes n} ) \subset \sum_{V \in \mathscr C_f} \M ( V)$. 
To prove the inverse inclusion, it suffices to show that $O_q$ 
contains the matrix coefficients of all the tilting modules, 
since all projective modules are tilting. 

Note that the Weyl modules $V_{\omega_i}, i=1, \cdots, n-1$ 
associated to the fundamental weights are all irreducible, 
since the weights occurring in $V_{\omega_i}$ 
lie in the $W$-orbit of $\omega_i$ 
with multiplicity $1$, 
hence we must have $L_{\omega_i} = V_{\omega_i} = T_{\omega_i}$. 
It is well known that the fundamental representation of $\mathfrak {sl}(n, \C)$ 
with highest weight $\omega_i$ can be realized as the $i$-th exterior power of 
the $n$-dimensional natural representation, 
in particular it is a direct summand of the $i$-th tensor power.  
Therefore by consideration of the characters, 
$V_{\omega_i}$ is either a direct summand of $V_{\omega_1}^{\otimes i}$ 
or a composition factor of a direct summand of $V_{\omega_1}^{\otimes i}$
(which is tilting). 
Either way, we have $\M(T_{\omega_i}) \subset \M( V_{\omega_1} )^i$ 
by Lemma \ref{matrixcoefficients}. 
Now for any $\lambda \in X^+$ 
with $m_i = \langle \lambda, \alpha_i^{\vee} \rangle$, 
the tilting module $T_\lambda$ 
must be a direct summand of 
$T_{\omega_1}^{\otimes m_1} \otimes \cdots \otimes T_{\omega_{n-1}}^{\otimes m_{n-1}}$, 
hence $\M ( T_\lambda) \subset \M ( T_{\omega_1} )^{m_1} \cdots \M ( T_{\omega_{n-1}} )^{m_{n-1}} \subset \sum_n \M ( V_{\omega_1})^n$. 
\end{proof}

\section{an increasing filtration of $O_q$}

Fix $\mathfrak g = \mathfrak{sl}_n$, and $q$ to be a primitive $\ell$-th root of $1$, 
where $\ell \geq n$ is odd. 
It follows from Proposition \ref{O(sl_n)} that the quantum coordinate algebra $O_q$ 
is generated by $X_{ij}, 1 \leq i, j \leq n$ over $\Q(q)$ 
subject to a list of relations. 
Proposition \ref{O_q} identifies $O_q$ with 
the linear span of matrix coefficients of finite dimensional $U_q$-modules. 
In this section, we will describe a canonical increasing filtration of $O_q$ as a $U_q \times U_q$-module. 

Let $R, R^+, X, X^+, \mathcal W$ denote the root system, 
positve roots, weight lattice, dominant weights and Weyl group of $\mathfrak g$. 
The affine Weyl group $\mathcal W_\ell$ 
is generated by the affine reflections $s_{\beta, m}, \beta \in R^+, m \in \Z$ given by 
$$s_{\beta, m} \cdot \lambda = s_\beta \cdot \lambda + m \ell \beta, \quad \lambda \in X. $$
Here $s_\beta$ is the reflection corresponding to the positive root $\beta$, 
and we are using the dot-action defined by 
$s_\beta \cdot \lambda = s_\beta (\lambda + \rho) - \rho$, 
where $\rho$ is the half sum of the positive roots. 

Denote by $C$ the first dominant alcove, i.e. 
$$C = \{ \lambda \in X^+ | \, \langle \lambda + \rho, \beta^\vee \rangle < \ell \text{  for all } \beta \in R^+ \}, $$
and set 
$$\bar C = \{ \lambda \in X | \, 0 \leq \langle \lambda + \rho, \beta^\vee \rangle \leq \ell \text{  for all } \beta \in R^+ \}, $$
then $\bar C$ is a fundamental domain for the action of $\mathcal W_\ell$ on $X$. 

The linkage principal (see [A1]) allows us to decompose any module from $\mathscr C_f$
into summands corresponding to the representatives in $\bar C$, 
therefore it yields a decomposition of $O_q$ as well. 

\begin{prop}\label{link}
As a $U_q \times U_q$-module, we have 
$O_q \cong \bigoplus_{\lambda \in (\ell -1) \rho + \ell X^+} V_\lambda \otimes V_\lambda^* \oplus 
(\bigoplus_{\mu \in \bar C \setminus \{ (\ell -1 ) \rho + \ell X \} } \Lambda_\mu)$, 
where $\Lambda_\mu = \sum_{\nu \in \mathcal W_\ell \cdot \mu \cap X^+} \M (T_\nu)$. 
\end{prop}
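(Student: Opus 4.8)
The plan is to derive the statement from Proposition~\ref{O_q} together with the linkage principle and the structure theory of tilting modules recalled above. First (\emph{decompose along blocks}): by the linkage principle $\mathscr C_f$ is the direct sum of its blocks $\mathscr C_f^{[\mu]}$, $\mu\in\bar C$, where the simple objects of $\mathscr C_f^{[\mu]}$ are the $L_\nu$ with $\nu\in\mathcal W_\ell\cdot\mu\cap X^+$, and the composition factors of any indecomposable module all lie in one block; in particular each indecomposable tilting $T_\nu$ lies in the block of the $\mathcal W_\ell$-orbit of $\nu$. For $\mu\in\bar C$ put $C_\mu=\sum_{V\in\mathscr C_f^{[\mu]}}\M(V)$, a $U_q\times U_q$-submodule of $U_q^*$. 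Since every object of $\mathscr C_f^{[\mu]}$ is a quotient of a projective, hence of a tilting module, lying in $\mathscr C_f^{[\mu]}$, Lemma~\ref{matrixcoefficients} gives $C_\mu=\sum_{\nu\in\mathcal W_\ell\cdot\mu\cap X^+}\M(T_\nu)=\Lambda_\mu$; and since every $V\in\mathscr C_f$ splits as $\bigoplus_\mu V_{[\mu]}$ with $V_{[\mu]}\in\mathscr C_f^{[\mu]}$, Proposition~\ref{O_q} yields $O_q=\sum_{\mu\in\bar C}\Lambda_\mu$.

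Second (\emph{the sum is direct}): for fixed $V$ and $f$ in a basis of $V^*$ the map $v\mapsto c_{v,f}$ is a morphism of $\rho_1$-modules $V\to U_q^*$, so $C_\mu$ with its $\rho_1$-action is a sum of (subquotients of) modules from $\mathscr C_f^{[\mu]}$, hence every finite-dimensional $\rho_1$-submodule of $C_\mu$ lies in $\mathscr C_f^{[\mu]}$. Now $O_q$ is the union of its finite-dimensional $\rho_1$-submodules (it is a comodule over itself), each of which decomposes along the blocks of $\mathscr C_f$; a finite-dimensional $\rho_1$-submodule contained in $C_\mu\cap\sum_{\mu'\neq\mu}C_{\mu'}$ would lie both in $\mathscr C_f^{[\mu]}$ and in $\bigoplus_{\mu'\neq\mu}\mathscr C_f^{[\mu']}$, hence be zero. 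Therefore $C_\mu\cap\sum_{\mu'\neq\mu}C_{\mu'}=0$, and $O_q=\bigoplus_{\mu\in\bar C}\Lambda_\mu$ as $U_q\times U_q$-modules.

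Third (\emph{the generic blocks}): the dot-action of $\mathcal W_\ell$ on $X$ preserves the coset $(\ell-1)\rho+\ell X$, since its translations are by $\ell Q\subseteq\ell X$ and $w\cdot((\ell-1)\rho+\ell\xi)=(\ell-1)\rho+\ell(w\cdot\xi)$ for $w\in\mathcal W$; hence $(\ell-1)\rho+\ell X^+$ is a union of linkage classes, namely $\bigsqcup_{\mu\in\bar C\cap((\ell-1)\rho+\ell X)}(\mathcal W_\ell\cdot\mu\cap X^+)$. For $\lambda=(\ell-1)\rho+\ell\lambda'$ with $\lambda'\in X^+$ the Steinberg tensor product theorem gives $L_\lambda\cong L_{(\ell-1)\rho}\otimes L_{\lambda'}^{[1]}$; comparing with the analogous factorisation of the character of $V_\lambda$ (using that $V_{\lambda'}$ is irreducible in the classical setting) shows $\dim L_\lambda=\dim V_\lambda$, so $V_\lambda$ is irreducible, whence $T_\lambda=V_\lambda=L_\lambda$, $\phi_{V_\lambda}$ is injective, and $\M(T_\lambda)=\M(V_\lambda)\cong V_\lambda\otimes V_\lambda^*$. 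Moreover $L_{(\ell-1)\rho}=V_{(\ell-1)\rho}$ is the Steinberg module, which is projective, so each such $L_\lambda$ is projective and the block $\mathscr C_f^{[\mu]}$ with $\mu\in\bar C\cap((\ell-1)\rho+\ell X)$ is semisimple; thus $\Lambda_\mu=\sum_{\nu\in\mathcal W_\ell\cdot\mu\cap X^+}\M(V_\nu)=\bigoplus_{\nu\in\mathcal W_\ell\cdot\mu\cap X^+}V_\nu\otimes V_\nu^*$, the sum being direct because the $V_\nu$ are pairwise non-isomorphic irreducibles. Summing over these $\mu$ and reindexing by $\lambda$ identifies $\bigoplus_{\mu\in\bar C\cap((\ell-1)\rho+\ell X)}\Lambda_\mu$ with $\bigoplus_{\lambda\in(\ell-1)\rho+\ell X^+}V_\lambda\otimes V_\lambda^*$, which combined with $\bigoplus_{\mu\in\bar C\setminus((\ell-1)\rho+\ell X)}\Lambda_\mu$ is the asserted decomposition.

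The main obstacle is the directness in the second step, i.e.\ that matrix coefficients attached to distinct linkage classes are linearly independent inside $U_q^*$; this is handled by exhausting $O_q$ by finite-dimensional sub-bimodules and applying the linkage principle to each, but it is the one place where a genuine argument, rather than formal manipulation of matrix coefficients via Lemma~\ref{matrixcoefficients}, is needed. A secondary point requiring care is the verification in the third step that $V_\lambda=T_\lambda$ is irreducible for $\lambda\in(\ell-1)\rho+\ell X^+$, which is where the Steinberg tensor product theorem and the projectivity of the Steinberg module enter; everything else reduces to bookkeeping with the decomposition $\mathscr C_f=\bigoplus_{\mu}\mathscr C_f^{[\mu]}$.
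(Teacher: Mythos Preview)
Your proof is correct and follows the same overall strategy as the paper: invoke Proposition~\ref{O_q} to write $O_q$ as a sum of $\M(T_\nu)$'s, use the linkage principle to split this sum along $\mathcal W_\ell$-orbits, and then identify the summands corresponding to the Steinberg-type weights $(\ell-1)\rho+\ell X^+$ with $V_\lambda\otimes V_\lambda^*$ via irreducibility of $V_\lambda$.

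The differences are in the level of detail and one citation. The paper's proof is terse: it simply asserts that ``the linkage principle implies $O_q=\oplus_{\mu\in\bar C}\Lambda_\mu$'' without justifying directness, whereas you supply a genuine argument by exhausting $O_q$ with finite-dimensional $\rho_1$-submodules and using the block decomposition of $\mathscr C_f$. For the irreducibility of $V_\lambda$ with $\lambda\in(\ell-1)\rho+\ell X^+$, the paper identifies these weights geometrically as the $\mathcal W_\ell$-orbits of the vertices of $\bar C$ and then cites [APW, Corollary~7.6], while you argue directly via the Steinberg tensor product theorem and a dimension count (your computation is correct: $\dim V_{(\ell-1)\rho+\ell\lambda'}=\ell^{|R^+|}\dim V_{\lambda'}=\dim V_{(\ell-1)\rho}\cdot\dim L_{\lambda'}^{\mathrm{cl}}$). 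Your route is more self-contained; the paper's is shorter but outsources both the directness and the irreducibility.
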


\begin{proof} 
Recall that $O_q$ is spanned by the matrix coefficients of (tilting) modules from $\mathscr C_f$. 
The linkage principal implies that $O_q = \oplus_{\mu \in \bar C} \Lambda_\mu$, 
where $\Lambda_\mu = \sum_{\nu \in \mathcal W_\ell \cdot \mu \cap X^+} \M (T_\nu)$. 
The vertices of the simplex $\bar C$ are $ -\rho, \ell \omega_i - \rho, i =1, \cdots, n-1$, 
where $\omega_i$'s are the fundamental weights of $\mathfrak g$. 
The $\mathcal W_\ell$-orbits of these vertices consist of weights of the form $(\ell - 1)\rho + \ell X$. 
By [APW, Corollary 7.6], 
if $\lambda \in (\ell - 1) \rho + \ell X^+$, 
the Weyl module $V_\lambda$ is irreducible, 
in which case $V_\lambda = T_\lambda$
and $\M (T_\lambda) \cong V_\lambda \otimes V_\lambda^*$. 
\end{proof}

\begin{lemma} \label{bb}
Let $V, V' \in \mathscr C_f$. 
\begin{enumerate}
\item 
Suppose $V$ admits a Weyl filtration $0 = V^0 \subset V^1 \subset \cdots \subset V^m = V$ 
such that $V^i / V^{i-1} \cong V_{\lambda_i}$ for some $\lambda_i \in X^+$, 
then $\M (V) \subset \sum_i \M (T_{\lambda_i})$. 
\item 
Suppose $V'$ admits a dual Weyl filtration with factors isomorphic to $V_{\mu_i}^*$ 
for some $\mu_i \in X^+$, 
then $\M (V') \subset \sum_i \M (T_{\mu_i}^* )$. 
\end{enumerate}
\end{lemma}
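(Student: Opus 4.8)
The plan is to prove (1) by induction on the length $m$ of the Weyl filtration; statement (2) then follows by applying (1) to the dual module and using Lemma \ref{matrixcoefficients}(3). For the base case $m=1$ we have $V \cong V_{\lambda_1}$, and since $V_{\lambda_1} \hookrightarrow T_{\lambda_1}$ by the defining property of the indecomposable tilting module, Lemma \ref{matrixcoefficients}(1) gives $\M(V_{\lambda_1}) \subset \M(T_{\lambda_1})$. For the inductive step, consider the short exact sequence $0 \to V^{m-1} \to V \to V/V^{m-1} \to 0$, where $V^{m-1}$ carries a Weyl filtration of length $m-1$ with factors $V_{\lambda_1}, \dots, V_{\lambda_{m-1}}$ and $V/V^{m-1} \cong V_{\lambda_m}$. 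By Lemma \ref{matrixcoefficients}(1), $\M(V^{m-1}) \subset \M(V)$ and $\M(V/V^{m-1}) \subset \M(V)$; more importantly I want the reverse-type containment $\M(V) \subset \M(V^{m-1}) + \M(V/V^{m-1})$. This last inclusion is the crux: in matrix terms, choosing a basis of $V$ adapted to the submodule $V^{m-1}$, every matrix coefficient of $V$ is a sum of a coefficient supported in the $V^{m-1}$-block, one supported in the $(V/V^{m-1})$-block, and one in the off-diagonal block; the off-diagonal coefficients $c_{m,\xi}$ with $m \in V^{m-1}$ and $\xi$ vanishing on $V^{m-1}$ are themselves matrix coefficients of the submodule $V^{m-1}$ (equivalently, $\phi_V(V^{m-1} \otimes V^*) = \M(V^{m-1})$ already from Lemma \ref{matrixcoefficients}(1)), so in fact $\M(V) \subset \M(V^{m-1}) + \M(V/V^{m-1})$. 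Combining with the inductive hypothesis $\M(V^{m-1}) \subset \sum_{i=1}^{m-1}\M(T_{\lambda_i})$ and the base case $\M(V/V^{m-1}) = \M(V_{\lambda_m}) \subset \M(T_{\lambda_m})$ yields $\M(V) \subset \sum_{i=1}^m \M(T_{\lambda_i})$.

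For part (2): if $V'$ has a dual Weyl filtration with factors $V_{\mu_i}^*$, then $(V')^*$ has a Weyl filtration with factors $(V_{\mu_i}^*)^* \cong V_{\mu_i}$ (the order reversed, which is irrelevant). By part (1), $\M((V')^*) \subset \sum_i \M(T_{\mu_i})$. Applying the antipode $S$ and Lemma \ref{matrixcoefficients}(3), which gives $\M(V') = \M((V')^{**}) = S(\M((V')^*))$ and $S(\M(T_{\mu_i})) = \M(T_{\mu_i}^*)$, we obtain $\M(V') \subset \sum_i \M(T_{\mu_i}^*)$. Here one uses that $S$ is an anti-automorphism of $O_q$, so it carries sums to sums.

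The main obstacle is the inclusion $\M(V) \subset \M(V^{m-1}) + \M(V/V^{m-1})$ for a non-split extension; everything else is bookkeeping. I would justify it cleanly by picking a $\Q(q)$-basis $\{e_1,\dots,e_k\}$ of $V$ so that $\{e_1,\dots,e_j\}$ spans $V^{m-1}$, writing the action of $u \in U_q$ as a block-lower-triangular matrix $\begin{pmatrix} A(u) & 0 \\ B(u) & D(u)\end{pmatrix}$ (the zero block reflecting that $V^{m-1}$ is a submodule), and noting that the span of all entries of $A$ equals $\M(V^{m-1})$, the span of all entries of $D$ equals $\M(V/V^{m-1})$, while every entry of $B$ is a matrix coefficient $c_{e_r,\delta_s}$ with $e_r \in V^{m-1}$ and $\delta_s$ supported off $V^{m-1}$, which lies in $\phi_V(V^{m-1}\otimes V^*) = \M(V^{m-1})$. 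Hence the span of all matrix coefficients of $V$, namely $\M(V)$, is contained in $\M(V^{m-1}) + \M(V/V^{m-1})$, completing the induction.
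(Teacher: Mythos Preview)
Your proof of part (1) contains a genuine gap: the key inclusion $\M(V) \subset \M(V^{m-1}) + \M(V/V^{m-1})$ is false for non-split extensions. You have misidentified the off-diagonal block. With a basis adapted to $V^{m-1}$, the coefficients $c_{e_r,\delta_s}$ with $e_r \in V^{m-1}$ and $\delta_s$ vanishing on $V^{m-1}$ are all \emph{zero} (since $u\cdot e_r \in V^{m-1}$); that is the zero block, not $B$. The genuinely nonzero off-diagonal entries have $e_r \notin V^{m-1}$ and $\delta_s \notin \mathrm{Ann}(V^{m-1})$, and these lie neither in $\phi_V(V^{m-1}\otimes V^*)=\M(V^{m-1})$ nor in $\phi_V(V\otimes (V/V^{m-1})^*)=\M(V/V^{m-1})$. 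A concrete counterexample in the $\mathfrak{sl}_2$ setting: take $V=T_{n_2}$ with its Weyl filtration $V_{n_2}\subset T_{n_2}$ and quotient $V_{n_1}$ (here $0\le n_1\le \ell-2$ and $n_1=n_2'$). By the paper's Lemma~\ref{t1}, $\M(T_{n_2})$ has $L_{n_1}\otimes L_{n_2}$ as a composition factor, while neither $\M(V_{n_2})$ nor $\M(V_{n_1})=\M(L_{n_1})$ does; hence $\M(T_{n_2})\not\subset \M(V_{n_2})+\M(V_{n_1})$, and your inductive step fails at the very first nontrivial instance.

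The paper's argument supplies the missing idea: rather than decomposing matrix coefficients along the filtration, it constructs an \emph{embedding} $V\hookrightarrow \bigoplus_i T_{\lambda_i}$ and then invokes Lemma~\ref{matrixcoefficients}(1). For each $i$ one takes $f_i:V^i\twoheadrightarrow V_{\lambda_i}\hookrightarrow T_{\lambda_i}$; the essential homological input is the vanishing $\mathrm{Ext}^1_{\mathscr C_f}(V/V^i,\,T_{\lambda_i})=0$ (because $V/V^i$ has a Weyl filtration and $T_{\lambda_i}$ a dual Weyl filtration), which lets each $f_i$ extend to $g_i:V\to T_{\lambda_i}$ with $V^i\cap \ker g_i=V^{i-1}$, so that $g=\sum_i g_i$ is injective. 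This Ext-vanishing is precisely what your matrix-block argument tries to circumvent and cannot. Your derivation of part (2) from part (1) via duality and Lemma~\ref{matrixcoefficients}(3) is correct and coincides with the paper's alternative argument.
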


\begin{proof}
Let $f_i$ be the composition of $V^i \twoheadrightarrow V^i / V^{i-1} \cong V_{\lambda_i} \hookrightarrow T_{\lambda_i}$. 
Apply $\text{Hom}_{\mathscr C_f} (-, T_{\lambda_i})$ to the short exact sequence
$0 \to V^i \to V \to V/ V^i \to 0$, 
we get 
$\text{Hom}_{\mathscr C_f} (V, T_{\lambda_i}) \to \text{Hom}_{\mathscr C_f} (V^i, T_{\lambda_i}) \to \text{Ext}_{\mathscr C_f} ( V/ V^i, T_{\lambda_i} ) $
which is exact. 
Since $V / V^i$ and $T_{\lambda_i}$ admit a Weyl filtration and a dual Weyl filtration respectively, 
it follows that $\text{Ext}_{\mathscr C_f} (V / V^i, T_{\lambda_i}) = 0 $, 
hence $\text{Hom}_{\mathscr C_f} (V, T_{\lambda_i}) \to \text{Hom}_{\mathscr C_f} (V^i, T_{\lambda_i})$ is surjective. 
Let $g_i: V \to T_{\lambda_i}$ be a preimage of $f_i$, 
then $ V^i \cap \text{Ker } g_i = V^{i-1}$. 
Define $g = \sum g_i: V \to \oplus_i T_{\lambda_i}$, 
then $\text{Ker } g = \cap_i \text{Ker } g_i = 0$, 
i.e. $g$ is injective. 
Hence by Lemma \ref{matrixcoefficients} we have $\M (V) \subset \sum_i \M (T_{\lambda_i})$. 
Analogously we can prove (2) 
by constructing a surjective map $\oplus_i T_{\mu_i}^* \to V'$,  
but we can also argue as follows: 
by assumption, $V'^*$ admits a Weyl filtration with factors isomorphic to $V_{\mu_i}$, 
hence by (1) we have $\M (V'^*) \subset \sum \M (T_{\mu_i})$, 
hence it follows from Lemma \ref{matrixcoefficients} (3) that $\M (V') = S (\M (V'^*)) \subset \sum_i S (\M (T_{\mu_i})) = \sum_i \M (T_{\mu_i}^*)$. 
\end{proof}

\begin{theorem} \label{maintheorem2}
Let $\mu \in \bar C \setminus \{ (\ell -1) \rho + \ell X \}$, and write 
$\mathcal W_\ell \cdot \mu \cap X^+ = \{ \nu_i, i \geq 1 \}$ so that 
$\nu_i \leq \nu_j$ implies $i \leq j$. 
Set $P^i = \sum_{j \leq i} \M (T_{\nu_j})$, 
then $P^1 \subset \cdots \subset P^{i-1} \subset P^i \subset \cdots$ 
is an increasing filtration of $U_q \times U_q$-submodules of $\Lambda_\mu$ 
with subquotients $P^i / P^{i-1} \cong V_{ - \omega_0 \nu_i}^* \otimes V_{\nu_i}^*$ as a $U_q \times U_q$-module. 
\end{theorem}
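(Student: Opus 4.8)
The plan is to understand the tilting modules $T_{\nu_i}$ well enough to pin down their Weyl and dual Weyl filtrations, then read off the matrix coefficients using Lemma~\ref{bb}. The starting point is the structure theory of tilting modules inside a single linkage class: for $\mu$ a weight in $\bar C$ that is not on the upper wall (i.e. not of the form $(\ell-1)\rho + \ell X$), the alcove geometry is ``generic'' and one knows (this is the translation-functor picture behind the results cited from [APW], [A1], [A2]) that each $T_{\nu_i}$ has a Weyl filtration whose top factor is $V_{\nu_i}$ and whose remaining factors are $V_{\nu_j}$ with $j < i$ in the given order. Dually, $T_{\nu_i}$ has a dual Weyl filtration with top $V_{\nu_i}^*$ and remaining factors $V_{\nu_j}^*$, $j < i$. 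The first step, then, is to record this filtration data precisely in the form I need.

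Next I would bound $P^i$ from above and below. For the upper bound: since $P^i = \sum_{j\le i}\M(T_{\nu_j})$ and each $T_{\nu_j}$ ($j\le i$) has a Weyl filtration with factors among $\{V_{\nu_k} : k \le j \le i\}$, Lemma~\ref{bb}(1) gives $\M(T_{\nu_j}) \subset \sum_{k\le i}\M(T_{\nu_k}) = P^i$, which is circular, so instead I work with the quotient. Consider the short exact sequence $0 \to V_{\nu_i} \to T_{\nu_i} \to Q \to 0$ where $Q$ has a Weyl filtration by $V_{\nu_j}$, $j < i$; by Lemma~\ref{matrixcoefficients}(1), $\M(T_{\nu_i}) = \phi_{T_{\nu_i}}(V_{\nu_i}\otimes T_{\nu_i}^*) + \phi_{T_{\nu_i}}(T_{\nu_i}\otimes Q^*)$, and the ``quotient part'' $\phi_{T_{\nu_i}}(T_{\nu_i}\otimes Q^*) = \M(Q)$ lies in $\sum_{j<i}\M(T_{\nu_j}) \subset P^{i-1}$ by Lemma~\ref{bb}(1). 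So modulo $P^{i-1}$, $\M(T_{\nu_i})$ is spanned by the images of $V_{\nu_i}\otimes T_{\nu_i}^*$; dually, using $T_{\nu_i}^* \cong T_{-\omega_0\nu_i}$ has socle $V_{\nu_i}^*$ with the dual Weyl quotient filtered by $V_{\nu_j}^*$, $j<i$, one cuts the right-hand tensor factor down to $V_{\nu_i}^*$ modulo $P^{i-1}$ as well. This yields a surjection $V_{\nu_i}\otimes V_{\nu_i}^* \twoheadrightarrow P^i/P^{i-1}$ of $U_q\times U_q$-modules — but wait, I must be careful about which side carries which module: tracing through $\phi_V: V\otimes V^* \to U_q^*$ and the definitions of $\rho_1,\rho_2$, the left $U_q$ acts on the $V^*$-slot and the right on $V$, or vice versa; matching conventions gives the claimed $V_{-\omega_0\nu_i}^* \otimes V_{\nu_i}^*$ rather than $V_{\nu_i}\otimes V_{\nu_i}^*$ (using $V_\lambda^{**}\cong V_{-\omega_0\lambda}$ and $T_\lambda^*\cong T_{-\omega_0\lambda}$ to rewrite a $V$-slot as a $V^*$-slot).

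For the reverse inequality — that $P^i/P^{i-1}$ is not smaller than $V_{-\omega_0\nu_i}^*\otimes V_{\nu_i}^*$ — I would invoke the $\mathrm{Ext}$-orthogonality stated in the excerpt: $\mathrm{Ext}^0_{\mathscr C_f}(V_\lambda, V_\mu^*) = \Q(q)$ when $\lambda = -\omega_0\mu$ and $0$ otherwise, and all higher $\mathrm{Ext}$'s between a Weyl module and a dual Weyl module vanish. This controls the composition multiplicities and forces the map $V_{-\omega_0\nu_i}^*\otimes V_{\nu_i}^* \to P^i/P^{i-1}$ to be injective: any kernel would have to come from a matrix coefficient of $T_{\nu_i}$ that already lies in $P^{i-1}$, i.e. factors through $\oplus_{j<i} T_{\nu_j}$, and a Hom/Ext computation (apply $\mathrm{Hom}(-, T_{\nu_j})$ to the Weyl filtration of $T_{\nu_i}$, using $\mathrm{Ext}^1$ between Weyl-filtered and dual-Weyl-filtered modules vanishes) shows the only such contributions are the ones already accounted for in the quotient $Q$. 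Equivalently, one checks the characters: $\dim$ of the $\nu_i$-isotypic piece on each side matches, and since the surjection respects the highest-weight filtration it must be an isomorphism. The main obstacle, I expect, is precisely this last point — verifying that the cutting-down-modulo-$P^{i-1}$ described above is \emph{exactly} $V_{-\omega_0\nu_i}^*\otimes V_{\nu_i}^*$ and not some proper submodule or a non-split extension — which requires knowing that the top Weyl factor $V_{\nu_i}\hookrightarrow T_{\nu_i}$ is ``saturated'' in the sense that the induced map on matrix coefficients detects the full $V_{\nu_i}\otimes V_{\nu_i}^*$; this is where the genericity hypothesis $\mu\notin (\ell-1)\rho+\ell X$ (ensuring the linkage class behaves like the generic one, with $T_{\nu_i}$ having simple head and socle after the appropriate identification) is essential, and I would lean on [APW]/[A2] for the precise tilting-module structure in this alcove.
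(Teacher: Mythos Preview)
There is a genuine gap in your surjection step. The claimed identity $\M(T_{\nu_i}) = \phi_{T_{\nu_i}}(V_{\nu_i}\otimes T_{\nu_i}^*) + \phi_{T_{\nu_i}}(T_{\nu_i}\otimes Q^*)$ is false: by Lemma~\ref{matrixcoefficients}(1) the right-hand side equals $\M(V_{\nu_i}) + \M(Q)$, which in block-matrix terms consists of the two diagonal blocks and omits the off-diagonal block entirely. Concretely, in the $\mathfrak{sl}_2$ case with $n_1 < n_2$ a sequence and $n_1 \le \ell-2$, one has $P^{i-1} = \M(L_{n_1})$ and $\M(V_{n_2}) + P^{i-1} = \M(V_{n_2})$, but the Loewy picture of $\M(T_{n_2})$ in Lemma~\ref{t1} shows $\M(T_{n_2}) \not\subset \M(V_{n_2}) + P^{i-1}$. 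So your reduction ``modulo $P^{i-1}$, $\M(T_{\nu_i})$ is spanned by images of $V_{\nu_i}\otimes T_{\nu_i}^*$'' fails. Relatedly, the rewriting of $V_{\nu_i}$ as $V_{-\omega_0\nu_i}^*$ via ``$V_\lambda^{**}\cong V_{-\omega_0\lambda}$'' is wrong: the Weyl module $V_{\nu_i}$ and the dual Weyl module $V_{-\omega_0\nu_i}^*$ share a character but are not isomorphic unless both are irreducible.

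The fix is to use \emph{both} filtrations of $T_{\nu_i}$, one on each tensor slot. From the dual Weyl filtration take the submodule $W\subset T_{\nu_i}$ with $T_{\nu_i}/W \cong V_{-\omega_0\nu_i}^*$; its factors $V_\gamma^*$ satisfy $-\omega_0\gamma = \nu_j$ with $j<i$, so Lemma~\ref{bb}(2) gives $\phi(W\otimes T_{\nu_i}^*) = \M(W)\subset P^{i-1}$. From the Weyl filtration take $V_{\nu_i}\hookrightarrow T_{\nu_i}$, so $\phi(T_{\nu_i}\otimes (T_{\nu_i}/V_{\nu_i})^*) = \M(T_{\nu_i}/V_{\nu_i})\subset P^{i-1}$ by Lemma~\ref{bb}(1). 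Setting $N$ equal to the sum of these two subspaces, the quotient $(T_{\nu_i}\otimes T_{\nu_i}^*)/N$ is \emph{canonically} $(T_{\nu_i}/W)\otimes V_{\nu_i}^* = V_{-\omega_0\nu_i}^*\otimes V_{\nu_i}^*$, and $\phi$ descends to a surjection onto $P^i/P^{i-1}$. For injectivity your Ext/character sketch is unnecessary and not quite on target: the point is that $V_{-\omega_0\nu_i}^*\otimes V_{\nu_i}^*$ has \emph{simple} socle $L_{\nu_i}\otimes L_{\nu_i}^*$ (socle of a dual Weyl is simple), and its image $\M(L_{\nu_i})$ is not contained in $P^{i-1}$ because $\nu_i$ does not occur as a weight of any $T_{\nu_j}$, $j<i$. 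Hence the kernel, which would have to contain the socle, is zero. The hypothesis $\mu\notin(\ell-1)\rho+\ell X$ plays no structural role here beyond what is already absorbed into Proposition~\ref{link}.
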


\begin{proof}
Since the dual Weyl filtration of  $T_{\nu_i}$ 
ends with $T_{\nu_i} \twoheadrightarrow V_{ - \omega_0 \nu_i }^*$, 
there exists a submodule $W \subset T_{\nu_i}$ 
such that $T_{\nu_i} / W \cong V_{ - \omega_0 \nu_i }^*$, 
and $W$ admits a filtration with factors isomorphic to $V_\gamma^*$'s 
with $ - \omega_0 \gamma < \nu_i$ 
and $- \omega_0 \gamma \in \mathcal W_\ell \cdot \nu_i$, 
i.e. $- \omega_0 \gamma = \nu_j$ for some $j < i$.
Hence we have $\phi_{T_{\nu_i}} ( W \otimes T_{\nu_i}^*) = \M (W) \subset P^{i-1}$
by Lemma \ref{matrixcoefficients} and Lemma \ref{bb}; 
analogously we also have 
$\phi_{T_{\nu_i}} ( T_{\nu_i} \otimes (T_{\nu_i} / V_{\nu_i} )^* ) 
= \M ( T_{\nu_i} / V_{\nu_i} ) \subset P^{i-1}$.
Set $N = W \otimes T_{\nu_i}^* + T_{\nu_i} \otimes (T_{\nu_i} / V_{\nu_i} )^*$, 
then $\phi_{T_{\nu_i}}$ induces a surjective map
$\psi: (T_{\nu_i} \otimes T_{\nu_i}^*) / N \twoheadrightarrow \M (T_{\nu_i}) / ( \M (T_{\nu_i}) \cap P^{i-1} ) = P^i / P^{i-1}$.
Note that 
$(T_{\nu_i} \otimes T_{\nu_i}^*) / N \cong V_{ - \omega_0 \nu_i}^* \otimes V_{\nu_i}^*$, 
the socle of which is $L_{\nu_i} \otimes L_{\nu_i}^*$. 
Since 
$\psi (L_{\nu_i} \otimes L_{\nu_i}^*) = (\M (L_{\nu_i}) + P^{i-1}) / P^{i-1} \cong \M (L_{\nu_i}) \neq 0$, 
$\psi$ is also injective,  
hence it induces the isomorphism 
$ V_{ - \omega_0 \nu_i}^* \otimes V_{\nu_i}^* \, \tilde \to \, P^i / P^{i-1}$. 
\end{proof}

As an application, we will compute $HH^0 (O_q, O_q)$, 
the $0$-th Hochschild cohomology of the coalgebra $O_q$ with coefficients in $O_q$, 
which is equivalent to the algebra of cocommutative elements in $O_q$. 

Suppose $f \in O_q$ is cocommutative, then 
$f (u u') = f (u' u)$ for any $u, u' \in U_q$, 
i.e. $\rho_1 (u) f = \rho_2 ( S^{-1} u ) f $. 

\begin{lemma} \label{dim1}
For any $\lambda \in X^+$,
the subspace 
$Y = \{ y \in V_{ - \omega_0 \lambda}^* \otimes V_\lambda^*: 
\rho_1' (u) y = \rho_2' ( S^{-1} u ) y, \, \forall u \in U_q \}$ 
is one-dimensional,  
where $\rho_1', \rho_2'$ denote the actions of $U_q$ 
on $V_{ - \omega_0 \lambda}^*$ and $V_\lambda^*$ respectively. 
\end{lemma}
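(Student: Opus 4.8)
The statement asks us to show that the space $Y$ of vectors in $V_{-\omega_0\lambda}^* \otimes V_\lambda^*$ that are "cocommutative" (i.e. satisfy $\rho_1'(u)y = \rho_2'(S^{-1}u)y$ for all $u$) is one-dimensional. The first thing I would do is reinterpret this condition invariantly. Since $V_\lambda^*$ with the $\rho_2'$-action twisted by $S^{-1}$ is, up to the usual identifications, nothing but the module $(V_\lambda^*)^*$ with its standard action — or more precisely, using that for a finite-dimensional $U_q$-module $M$ the assignment $m \otimes \xi \mapsto (\text{map sending } u \mapsto \langle \xi, S^{-1}(u)\cdot\text{-}\rangle)$ relates the double dual to $M$ — the condition "$\rho_1'(u) = \rho_2'(S^{-1}u)$ on $y$" says exactly that $y$ is fixed under the diagonal $U_q$-action on $V_{-\omega_0\lambda}^* \otimes (V_\lambda^*)^{**}$, equivalently that $y$ lies in the space of $U_q$-invariants of $V_{-\omega_0\lambda}^* \otimes V_\lambda$ once we identify $(V_\lambda^*)^{**} \cong V_\lambda$. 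So the claim reduces to
\[
\dim \operatorname{Hom}_{U_q}\!\big(\mathbf{1},\, V_{-\omega_0\lambda}^* \otimes V_\lambda\big) = \dim \operatorname{Hom}_{U_q}\!\big(V_\lambda^*,\, V_{-\omega_0\lambda}^*\big) = 1.
\]

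**Key steps.** First I would carefully spell out the identification $V_{-\omega_0\lambda}^* \otimes (V_\lambda^*)^{**} \cong V_{-\omega_0\lambda}^* \otimes V_\lambda$ as $U_q$-modules, checking that the twist by $S^{-1}$ on the second factor is precisely what makes the double-dual identification work; this is where the antipode bookkeeping lives and must be done with the comultiplication conventions fixed in Section 2. Second, having reduced to computing $\operatorname{Hom}_{U_q}(V_\lambda^*, V_{-\omega_0\lambda}^*)$, I would invoke the Ext-vanishing property of Weyl modules and their duals recalled just before Proposition \ref{O_q}: namely $\operatorname{Ext}^i_{\mathscr C_f}(V_\mu, V_\nu^*) = \Q(q)$ if $i=0$ and $\mu = -\omega_0\nu$, and $0$ otherwise. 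Applying this with $\nu = -\omega_0\lambda$ (so $V_\nu^* = V_{-\omega_0\lambda}^*$) and with the Weyl module $V_\mu = (V_\lambda^*)^* = V_\lambda$ — here I use that the dual of the dual Weyl module $V_\lambda^*$ is the Weyl module $V_\lambda$ — gives $\operatorname{Hom}(V_\lambda, V_{-\omega_0\lambda}^*) = \Q(q)$ since $\mu = \lambda = -\omega_0\nu$. Wait — I must instead first dualize: $\operatorname{Hom}_{U_q}(V_\lambda^*, V_{-\omega_0\lambda}^*) \cong \operatorname{Hom}_{U_q}(V_{\lambda}, (V_{-\omega_0\lambda}^*)^*) = \operatorname{Hom}_{U_q}(V_\lambda, V_{-\omega_0\lambda})$, and then use that $\operatorname{Hom}_{U_q}(V_\lambda, V_{-\omega_0\lambda})$ — hmm, this is not quite the Ext-pairing shape. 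The cleanest route is the direct one: $\operatorname{Hom}_{U_q}(V_\lambda^*, V_{-\omega_0\lambda}^*)$; dualizing both arguments reverses the arrow, $\operatorname{Hom}_{U_q}(V_{-\omega_0\lambda}^{**}, V_\lambda^{**}) = \operatorname{Hom}_{U_q}(V_{-\omega_0\lambda}, V_\lambda)$, which by the socle/head description ($L_\mu$ is the head of $V_\mu$ and the socle of $V_{-\omega_0\mu}^*$) — rather, I should just apply the stated Ext fact once, in the form $\operatorname{Hom}_{\mathscr C_f}(V_{-\omega_0\lambda}, V_\lambda^*)$: this is $\Q(q)$ iff $-\omega_0\lambda = -\omega_0\lambda$, true, so it is exactly $1$-dimensional. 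Since $\operatorname{Hom}_{U_q}(V_\lambda^*, V_{-\omega_0\lambda}^*) \cong \operatorname{Hom}_{U_q}(V_{-\omega_0\lambda}, V_\lambda^*)$ by applying the duality functor (which is exact and contravariant, sending $V_\mu^* \mapsto V_\mu$ and $V_{-\omega_0\lambda}^* \mapsto V_{-\omega_0\lambda}$), we are done.

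**Main obstacle.** The genuinely delicate part is not the representation-theoretic input — the Ext computation is quoted — but rather step one: translating the pointwise identity $\rho_1'(u)y = \rho_2'(S^{-1}u)y$ into a statement about $U_q$-module homomorphisms, getting every antipode and every leg of the coproduct on the correct side. I would do this by writing $y = \sum v_i \otimes \xi_i$ and unwinding both sides against a test element, or more slickly by identifying $V_{-\omega_0\lambda}^* \otimes V_\lambda^*$ with $\operatorname{Hom}_{\Q(q)}(V_\lambda, V_{-\omega_0\lambda}^*)$ and checking that the cocommutativity condition is exactly the condition of being a $U_q$-module map for the standard actions — after which the answer $\dim\operatorname{Hom}_{U_q}(V_\lambda, V_{-\omega_0\lambda}^*) = 1$ is immediate from the recalled Ext fact with $i = 0$. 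A secondary point worth a line is that $V_\lambda^* = V_{-\omega_0\lambda}^*$ would be false; one must keep straight that $(V_\lambda)^* =: V_\lambda^*$ is the dual Weyl module of highest weight — it has highest weight $-\omega_0\lambda$, so that the $\operatorname{Hom}$ lands correctly. Once the dictionary is pinned down, the proof is a one-line invocation.
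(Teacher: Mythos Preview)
Your approach is correct and genuinely different from the paper's. The key identification you land on in the final paragraph is the right one: under the linear isomorphism $V_{-\omega_0\lambda}^*\otimes V_\lambda^*\cong\operatorname{Hom}_{\Q(q)}(V_\lambda,V_{-\omega_0\lambda}^*)$, $w\otimes\xi\mapsto(v\mapsto\xi(v)w)$, the action $\rho_1'(u)$ becomes post-composition by $u$ and $\rho_2'(S^{-1}u)$ becomes pre-composition by $u$ (since $(S^{-1}u\cdot\xi)(v)=\xi(u\cdot v)$), so the condition defining $Y$ is exactly that the associated linear map intertwines the $U_q$-actions. Hence $Y\cong\operatorname{Hom}_{U_q}(V_\lambda,V_{-\omega_0\lambda}^*)$, and the $\operatorname{Ext}^0$ fact recalled before Proposition~\ref{O_q} gives dimension $1$ immediately.

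By contrast, the paper proves the lemma by a direct coordinate computation: it realizes $V_{-\omega_0\lambda}^*$ inside $\operatorname{Hom}_{B_q}(U_q,k_\lambda)$, picks weight bases of both factors with distinguished extremal vectors $v_1$ (weight $\lambda$) and $v_1'$ (weight $-\lambda$), uses the $U_q^0$-condition to force $y_{1i}=0$ for $i\neq 1$, and then shows the projection $y\mapsto y_{11}$ is injective by evaluating $y$ on pairs of elements of $U_q^+$. The lower bound comes from the canonical element $\sum e_i\otimes\delta_i\in L_\lambda\otimes L_\lambda^*$. Your route is slicker and exploits structure already quoted in the paper; the paper's route is self-contained and has the advantage of exhibiting, in the $\mathfrak{sl}_2$ case (Lemma~\ref{t3}), the explicit spanning vector of $Y$, which is used in the final proposition on $HH^0(O_q,O_q)$.

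One remark on exposition: your first two paragraphs meander through a double-dual detour and a couple of false starts before arriving at the clean statement. Drop that material and lead directly with the $\operatorname{Hom}$ identification of the last paragraph; the antipode check is the two-line computation above, and the invocation of $\operatorname{Ext}^0_{\mathscr C_f}(V_\lambda,V_{-\omega_0\lambda}^*)=\Q(q)$ then finishes.
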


\begin{proof}
Set $B_q = U_q^0 U_q^-$, $k = \Q(q)$, 
and denote by $k_\lambda$ the one-dimensional $B_q$-module
defined by the character $\chi_\lambda: U_q^0 \to k$ 
and extended to a $B_q$-module with trivial $U_q^-$-action. 
Recall from [APW, A1] that $V_{ - \omega_0 \lambda}^*$
is an integrable submodule of $\text{Hom}_{B_q} (U_q, k_\lambda)$, 
where $U_q$ is considered a $B_q$-module via left multiplication of $B_q$ on $U_q$, 
and the $U_q$-module structure on $\text{Hom}_{B_q} (U_q, k_\lambda)$ is defined 
via the right multiplication of $U_q$ on itself. 
Choose a basis $v_1, \cdots, v_s$ of $V_{- \omega_0 \lambda}^*$
such that $v_1 ( u E_i^{(r)} ) = 0$ for any $i$ if $r > 0$
and $v_1 (b) = \chi_\lambda (b)$ for any $b\in B_q$. 
Then $v_1$ has weight $\lambda$ 
(recall that $\lambda$ occurs with multiplicity $1$ in $V_{ - \omega_0 \lambda}^*$). 
Assume that $v_2, \cdots, v_s$ are also homogeneous vectors (with weights less than $\lambda$), 
then $v_i (b) = 0$ for any $b\in U_q^-$ and $i = 2, \cdots, s$. 
Similarly choose a homogeneous basis $v_1', \cdots, v_s'$ of $V_\lambda^*$ 
such that $v_1'$ has weight $ - \lambda$. 
For any $y = \sum_{ij} y_{ij} v_i \otimes v_j' \in Y$, 
it is easy to check that in order for $y$ 
to satisfy the equality $\rho_1' (u) y = \rho_2' ( S^{-1} u ) y$ 
for all $u = u^0 \in U_q^0$, 
we must have $y_{1i} = 0$ for any $i \neq 1$ 
(since $\chi_{- \lambda} S^{-1} = \chi_{ \lambda}$). 
Define a linear map $pr: Y \to k$; $y \mapsto y_{11}$, 
which we will show is in fact injective. 
Suppose $y_{11} = 0$ for some $y \in Y$, 
then for any $u^+_1, u^+_2 \in U_q^+$, 
we have 
$y ( u^+_1 \otimes u^+_2 ) = \sum_{ij} y_{ij} v_i ( u^+_1 ) v_j' ( u^+_2 )
=\sum_{ij} y_{ij}  \{ \rho_1' (u^+_1) v_i \} (1) v_j' ( u^+_2 )
=\sum_{ij} y_{ij} v_i (1) \{ \rho_2' ( S^{-1} u^+_1) v_j' \}  (u^+_2)
= y_{11} v_1' ( u^+_2 S^{-1} u^+_1 )
= 0$, 
hence $y = 0$. 
It means that $pr$ is injective, hence $\text{dim}_k Y \leq 1$. 
On the other hand, 
let $e_i$ be a basis of $L_\lambda$ 
and let $\delta_i$ be the dual basis of $L_\lambda^*$, 
it is easy to check that 
$\sum_i e_i \otimes \delta_i \in L_\lambda \otimes L_\lambda^* 
\subset V_{ - \omega_0 \lambda}^* \otimes V_\lambda^*$
satisfies the condition of $Y$, hence $\text{dim}_k Y = 1$. 
\end{proof}

\begin{prop} \label{Hochschild}
$HH^0 (O_q, O_q) \cong \Q(q) [ X ]^{\mathcal W}$. 
\end{prop}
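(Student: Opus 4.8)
The plan is to identify the cocommutative elements of $O_q$ with the $\mathcal{W}$-invariants of the group algebra $\Q(q)[X]$, exploiting the increasing filtration of $O_q$ established in Theorem~\ref{maintheorem2} together with the link decomposition of Proposition~\ref{link}. The key observation is that an element $f \in O_q$ is cocommutative precisely when $\rho_1(u)f = \rho_2(S^{-1}u)f$ for all $u \in U_q$; since this condition is preserved under the filtration, the space $HH^0(O_q, O_q)$ acquires a filtration whose graded pieces sit inside the spaces of ``twisted invariants'' of the subquotients. By Proposition~\ref{link} these subquotients are either $V_\lambda \otimes V_\lambda^*$ (for $\lambda \in (\ell-1)\rho + \ell X^+$, where $V_\lambda = T_\lambda$ is irreducible) or the factors $V_{-\omega_0\nu_i}^* \otimes V_{\nu_i}^*$ appearing in Theorem~\ref{maintheorem2}. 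In both cases Lemma~\ref{dim1} (and its evident analogue for the irreducible factors, where $V_\lambda \otimes V_\lambda^* = L_\lambda \otimes L_\lambda^*$) tells us that the twisted-invariant subspace is exactly one-dimensional, spanned by the ``trace'' element $\sum_i e_i \otimes \delta_i$ coming from $L_{\nu_i} \otimes L_{\nu_i}^*$.

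First I would fix a total order refining $\leq$ on $X^+$ and reindex the filtration of all of $O_q$ (combining Proposition~\ref{link} with Theorem~\ref{maintheorem2}) so that the $\lambda$-th graded piece is either $V_\lambda \otimes V_\lambda^*$ or $V_{-\omega_0\lambda}^* \otimes V_\lambda^*$ according to the alcove of $\lambda$. Next I would show that $\dim_{\Q(q)} HH^0(O_q,O_q)$ is at most the number of dominant weights: passing to the associated graded and applying Lemma~\ref{dim1} (plus the irreducible case), each graded piece contributes at most one dimension to the twisted invariants, and a standard filtration argument bounds $\dim HH^0$ by $|X^+|$. Then I would produce enough linearly independent cocommutative elements to match this bound: for each $\lambda \in X^+$ the matrix coefficient $\chi_\lambda := \phi_{L_\lambda}(\sum_i e_i \otimes \delta_i)$ is the ``quantum character'' of $L_\lambda$, it is manifestly cocommutative since $\chi_\lambda(uu') = \sum_i \delta_i(uu'e_i) = \operatorname{tr}_{L_\lambda}(uu') = \operatorname{tr}_{L_\lambda}(u'u) = \chi_\lambda(u'u)$, and the $\chi_\lambda$ are linearly independent because their images in the associated graded are the generators of the one-dimensional twisted-invariant spaces. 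Hence $HH^0(O_q,O_q)$ has a $\Q(q)$-basis $\{\chi_\lambda : \lambda \in X^+\}$.

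Finally I would upgrade this linear isomorphism to an algebra isomorphism. The span of the $\chi_\lambda$ is closed under multiplication in $O_q$: by Lemma~\ref{matrixcoefficients}(2), $\chi_\lambda \cdot \chi_\mu$ lies in $\M(L_\lambda \otimes L_\mu)$ and is itself cocommutative, hence is a $\Q(q)$-linear combination of the $\chi_\nu$; working out the structure constants, $\chi_\lambda \cdot \chi_\mu = \sum_\nu [L_\lambda \otimes L_\mu : L_\nu]\,\chi_\nu$, where the coefficients are the composition-factor multiplicities. This is exactly the structure of the Grothendieck ring $R(\mathscr{C}_f) \otimes_{\Z} \Q(q)$, which for $\mathfrak{g} = \mathfrak{sl}_n$ is the ring of $\mathcal{W}$-invariant elements of the weight-lattice group algebra $\Q(q)[X]^{\mathcal{W}}$ (the classes $[L_\lambda]$ correspond to the bases dual to Weyl characters, but in any case the ring is $\Q(q)[X]^{\mathcal{W}}$). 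Sending $\chi_\lambda$ to $[L_\lambda]$ therefore gives the desired isomorphism $HH^0(O_q, O_q) \cong \Q(q)[X]^{\mathcal{W}}$.

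The main obstacle I anticipate is the dimension count's lower half, namely verifying that the quantum characters $\chi_\lambda$ remain linearly independent after specialization at the root of unity — equivalently, that each $\chi_\lambda$ has nonzero image in the $\lambda$-th graded piece of the filtration and not merely in some earlier piece. This requires knowing that $\M(L_\lambda) \not\subset P^{\lambda-1}$, which is essentially the content of the injectivity argument at the end of the proof of Theorem~\ref{maintheorem2} (the socle $L_{\nu_i} \otimes L_{\nu_i}^*$ maps nontrivially into $P^i/P^{i-1}$); one must check this applies uniformly across both types of graded piece. The algebra-structure computation, by contrast, is routine once the linear picture is in place.
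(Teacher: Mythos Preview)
Your proposal is correct and follows essentially the same route as the paper: define traces $\chi_\lambda = tr_{L_\lambda}$, use the filtration from Theorem~\ref{maintheorem2} together with Lemma~\ref{dim1} to show by induction that every cocommutative element is a linear combination of traces, and identify the resulting algebra with the Grothendieck ring $[\mathscr C_f]\otimes_\Z\Q(q)\cong\Q(q)[X]^{\mathcal W}$. The only cosmetic difference is that the paper uses the single filtration $P^\lambda=\sum_{\mu\leq\lambda}\M(T_\mu)$ uniformly rather than splitting by linkage class via Proposition~\ref{link}; since $V_\lambda\otimes V_\lambda^*=V_{-\omega_0\lambda}^*\otimes V_\lambda^*$ whenever $V_\lambda$ is irreducible, your case distinction is unnecessary, and the ``obstacle'' you flag (linear independence of the $\chi_\lambda$) is exactly what the paper's induction handles, just as you anticipated.
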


\begin{proof}
For a module $V \in \mathscr C_f$, 
we denote by $[ V ]$ its image in the Grothendieck ring $[ \mathscr C_f ]$. 
It is clear that $[ \mathscr C_f ]$ is isomorphic to $ \Z [ X ]^{\mathcal W}$,
with the isomorphism given by $[ V ] \to \text{ch} V$. 
Let $\mathcal R = [ \mathscr C_f ] \otimes_\Z \Q(q)$, 
then $\mathcal R \cong \Q(q) [ X ]^{\mathcal W}$
and it has a natural basis of simple characters $\{ \text{ch} L_\lambda, \lambda \in X^+ \}$. 

For $V \in \mathscr C_f$, 
define the trace of $V$ as 
$tr_V = \phi_V ( \sum_i v_i \otimes f_i ) \in \M(V) $, 
where $\{ v_i \}$ is a basis of $V$ and $\{ f_i \}$ is the dual basis of $V^*$. 
Let $\mathbf {tr} \subset O_q$ be the $\Q(q)$-linear span of traces of modules from $\mathscr C_f$. 
If $U \hookrightarrow V \twoheadrightarrow W $ 
is a short exact sequence of modules from $\mathscr C_f$, 
we have $tr_V = tr_U + tr_W$, 
therefore each $tr_V$ can be written as a linear combination of traces of its composition factors. 
Note that $tr_\lambda (: = tr_{L_\lambda} )$, $\lambda \in X^+$, 
are linearly independent, 
hence they form a basis of $\mathbf {tr}$, 
and $\mathbf {tr} \cong \mathcal R$ as a vector space. 
Since $tr_{V \otimes V' } = tr_V tr_{V'}$, 
it is in fact an isomorphism of algebras. 

Denote by $\mathbf{Co}$ the set of elements in $O_q$ that are cocommutative, 
it suffices to show that $\mathbf {Co} = \mathbf {tr}$. 
It is obvious that $\mathbf {tr} \subset \mathbf {Co}$. 
To prove the inverse inclusion, 
define $P^\lambda = \sum_{\mu \leq \lambda, \mu \in X^+} \M (T_\mu) \subset O_q$. 
Since $O_q = \bigcup_{\lambda \in X^+} P^\lambda$, 
it suffices to prove that $P^\lambda \cap \mathbf {Co} \subset \mathbf {tr}$. 
If $\lambda$ is minimal (for the ordering $ \leq $) among the weights in $X^+$, 
then $P^\lambda = \M (T_\lambda) = \M (L_\lambda) \cong L_\lambda \otimes L_\lambda^*$. 
It follows from Lemma \ref{dim1} that 
$P^\lambda \cap \mathbf{Co} = \Q(q) tr_\lambda \subset \mathbf {tr}$. 
Now assume that $P^\mu \cap \mathbf {Co} \subset \mathbf{tr}$ 
is true for any $\mu < \lambda, \mu \in X^+$. 
From the proof of Theorem \ref{maintheorem2}, 
we have 
$P^\lambda / \sum_{\mu < \lambda, \mu \in X^+} \M (T_\mu) 
\cong V_{ - \omega_0 \lambda}^* \otimes V_\lambda^*$. 
Suppose $f \in P^\lambda \cap \mathbf {Co}$, 
then $\rho_1 (u) f = \rho_2 ( S^{-1} u ) f $ for any $u \in U_q$, 
hence the image of $f $ 
in $P^\lambda / \sum_{\mu < \lambda, \mu \in X^+} \M (T_\mu) $
belongs to the subspace $Y$ defined in Lemma \ref{dim1}. 
Since $Y$ is one-dimensional and is spanned by the image of the trace of $L_\lambda$, 
there exists a scalar $\zeta$ 
such that $f - \zeta tr_\lambda \in \sum_{\mu < \lambda, \mu \in X^+} \M (T_\mu) \cap \mathbf {Co}$. 
By induction $f - \zeta tr_\lambda \in \mathbf {tr}$, 
hence $f \in \mathbf {tr}$. 
\end{proof}

It is well known that the category of finite dimensional representations of $U_q$
is semisimple when $q$ is not a root of unity, 
in which case the quantum function algebra $O_q$ is the direct sum of matrix coefficients 
of irreducible modules, 
and all the cocommutative elements of $O_q$ come from 
the traces of finite dimensional modules. 
Proposition \ref{Hochschild} says that 
the last statement is also true at roots of $1$. 

\begin{remark}
For other types of simple Lie algebras, 
I am not sure if $O_q$ is linearly spanned 
by the matrix coefficients of finite dimensional $U_q$-modules.  
Nonetheless if we denote the latter by $O_q'$, 
then obviously $O_q \subset O_q'$, 
and the results in this section hold for $O_q'$. 
\end{remark}

\section{the case of $\mathfrak {sl}_2$}

In this section we study the $\mathfrak{sl}_2$ case more thoroughly. 
Let $\ell > 2 $ be odd, $q$ be a primitive $\ell$-th root of unity. 
The quantum function algebra $O_q$ 
is generated by $a, b, c, d$ over $\Q(q)$ 
subject to the relations: 
$$ab = q ba, \qquad ac = qca, $$
$$bd = qdb,  \qquad cd = qdc,  $$
$$bc=cb, \qquad  ad-qbc = da -q^{-1} bc =1. $$
The comultiplication $\triangle$, 
counit $\varepsilon$ and antipode $S$ are defined by
$$\triangle (a) = a \otimes a + b\otimes c, \qquad \triangle (b) = a \otimes b + b \otimes d, $$
$$\triangle (c) = c \otimes a + d \otimes c, \qquad \triangle (d) = c \otimes b + d \otimes d, $$
$$\varepsilon (a) = \varepsilon (d) =1, \qquad \varepsilon (b) = \varepsilon (c) = 0, $$
$$S(a) = d,  \quad S (d) = a, \quad S ( b) = - q^{-1} b, \quad S (c) = -q c. $$
The quantum group $U_q$
is generated by $E^{(i)}, F^{(i)}, K^{\pm 1},  \left[ \begin{array}{c} K;  c \\ t \end{array} \right]$
subject to some relations.

For $\mathfrak g = \mathfrak{sl}_2$, 
we have $X = \Z, X^+ = \N$. 
The Weyl module $V_n$, for $n \in \N $, 
is $(n+1)$-dimensional, with a basis $f_0, f_1, \cdots, f_n$ 
such that $f_i$ is of weight $-n+2i$ and 
$$E^{(j)} f_i = \left[ \begin{array}{c} i+j \\i \end{array} \right]_q f_{i+j}, \qquad
F^{(j)} f_i = \left[ \begin{array}{c} n - i+j  \\j \end{array} \right]_q f_{i-j}. $$
The dual representation $V_n^*$ is also $(n+1)$-dimensional, 
with a basis $e_0, e_1, \cdots, e_n$ such that $e_i$ is of weight $n- 2i$ 
and 
$$E^{(j)} e_i =  \left[ \begin{array}{c} i \\ j  \end{array} \right]_q e_{i-j}, \qquad 
F^{(j)} e_i =  \left[ \begin{array}{c} n-i \\ j \end{array} \right]_q e_{i+j}. $$
The Weyl modules $V_n$ 
and their duals $V_n^*$ 
are reducible in general, 
but their composition series are well-known, 
so are the Weyl filtrations of the tilting modules $T_n$. 

\begin{lemma} \label{t}
Write $n=n_0 +\ell n_1$
with $0\leq n_0 \leq \ell-1, n_1 \geq 0$, 
then 
\begin{enumerate}
\item 
if $n_1 = 0$ or $n_0 = \ell - 1$, 
$V_n$ is irreduible, 
hence $T_n = V_n = V_n^* = L_n$; 
\item 
assume now that $0 \leq n_0 \leq \ell - 2$ and $n_1 \geq 1$, 
set $n' = ( \ell- 2 - n_0 ) + \ell (n_1 - 1 ) $, 
then we have the following exact sequences: 
$L_{n'}  \hookrightarrow V_n \twoheadrightarrow L_n $, 
$L_n  \hookrightarrow V_n^* \twoheadrightarrow L_{n'} $, 
$V_n \hookrightarrow T_n  \twoheadrightarrow V_{n'} $ and 
$V_{n'}^* \hookrightarrow T_n \twoheadrightarrow V_n^*$. 
\end{enumerate}
\end{lemma}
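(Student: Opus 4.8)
\emph{Strategy.} I would settle part (1) by alcove geometry, and then deduce all four exact sequences of part (2) from the single statement $L_{n'}\hookrightarrow V_n\twoheadrightarrow L_n$, using the duality $(-)^*$ and the tilting theory recalled earlier. For part (1): when $n_1=0$ and $n_0\leq\ell-2$ one has $\langle n+\rho,\alpha^\vee\rangle=n+1\leq\ell-1$, so $n$ lies in the interior of the first alcove and $V_n$ is irreducible by the strong linkage principle; when $n_0=\ell-1$ one has $n=(\ell-1)\rho+\ell n_1\in(\ell-1)\rho+\ell X^+$, so $V_n$ is irreducible by [APW, Corollary 7.6], exactly the case already used in Proposition \ref{link}. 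In both cases $V_n=L_n$, and since $-\omega_0=\mathrm{id}$ on $X=\Z$ we get $V_n^*\cong L_n^*\cong L_{-\omega_0 n}=L_n=V_n$; being simultaneously a Weyl module, a dual Weyl module, and indecomposable of highest weight $n$, this module must be $T_n$.

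\emph{The sequences for $V_n$ and $V_n^*$.} Assume now $0\leq n_0\leq\ell-2$ and $n_1\geq 1$. By the quantum Steinberg tensor product theorem (see [APW]), $L_\mu\cong L_{\mu_0}\otimes L'(\mu_1)^{[\ell]}$, where $\mu=\mu_0+\ell\mu_1$ with $0\leq\mu_0\leq\ell-1$ and $L'(\mu_1)$ is the classical simple $\mathfrak g$-module of highest weight $\mu_1$, so $\dim L_\mu=(\mu_0+1)(\mu_1+1)$; in particular $\dim L_n+\dim L_{n'}=n+1=\dim V_n$. In the basis $f_0,\dots,f_n$ of $V_n$, consider $v=f_{\ell n_1-1}$, of weight $-n+2(\ell n_1-1)=n'$. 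Using the factorization of Gaussian binomials at a primitive $\ell$-th root of unity (for $a=a_0+\ell a_1$, $b=b_0+\ell b_1$ with $0\leq a_0,b_0<\ell$ one has $\left[\begin{smallmatrix}a\\b\end{smallmatrix}\right]_q=\left[\begin{smallmatrix}a_0\\b_0\end{smallmatrix}\right]_q\binom{a_1}{b_1}$, hence this vanishes unless $a_0\geq b_0$ and $a_1\geq b_1$): since $E^{(j)}f_{\ell n_1-1}=\left[\begin{smallmatrix}\ell n_1-1+j\\j\end{smallmatrix}\right]_q f_{\ell n_1-1+j}$ with $\ell n_1-1+j=(j-1)+\ell n_1$ for $1\leq j\leq n_0+1\leq\ell-1$, and the residue $j$ of the denominator exceeds the residue $j-1$ of the numerator, every such binomial vanishes; for $j>n_0+1$ the target $f_{\ell n_1-1+j}$ is already zero. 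Thus $v$ is a maximal vector, so $M=U_q\cdot v$ is a quotient of $V_{n'}$. Computing $F^{(j)}f_{\ell n_1-1}=\left[\begin{smallmatrix}n_0+1+j\\j\end{smallmatrix}\right]_q f_{\ell n_1-1-j}$ and applying the same criterion shows this is nonzero exactly when $(j\bmod\ell)\leq\ell-n_0-2$, whence $\dim M=n_1(\ell-n_0-1)=\dim L_{n'}$. A highest weight quotient of $V_{n'}$ of dimension $\dim L_{n'}$ is $L_{n'}$, so $M\cong L_{n'}$; then $V_n/M$ is a (nonzero) highest weight module of highest weight $n$ and dimension $(n_0+1)(n_1+1)=\dim L_n$, hence $V_n/M\cong L_n$. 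This gives $L_{n'}\hookrightarrow V_n\twoheadrightarrow L_n$, and applying the exact contravariant functor $(-)^*$ with $L_m^*\cong L_m$ yields $L_n\hookrightarrow V_n^*\twoheadrightarrow L_{n'}$.

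\emph{The sequences for $T_n$ and $T_n^*$.} By the general theory recalled earlier, $T_n$ is $\Delta$- and $\nabla$-filtered, self-dual ($T_n^*\cong T_{-\omega_0 n}=T_n$), contains $V_n$ as a submodule with every other Weyl factor $V_\mu$ satisfying $\mu<n$, and has $[T_n:L_n]=1$. Iterating the first exact sequence along the chain $n>n'>n''>\cdots$ of dominant weights of the block writes $\text{ch}\,L_{n'}$ as the alternating sum $\text{ch}\,V_{n'}-\text{ch}\,V_{n''}+\cdots$ of Weyl characters; combining this with a computation of $\text{Ext}^1_{\mathscr C_f}(V_\mu,V_n)$ in the block (which, using the structure of $V_n$ and $V_n^*$ just obtained, is $\Q(q)$ for $\mu=n'$ and $0$ otherwise) forces the Weyl filtration of $T_n$ to have exactly the two factors $V_n$ and $V_{n'}$, i.e. $V_n\hookrightarrow T_n\twoheadrightarrow V_{n'}$. (Equivalently, one identifies $T_n$ with a suitable indecomposable tilting summand of a tensor product of Weyl modules, or builds it as the unique non-split self-dual extension of $V_{n'}$ by $V_n$ and checks the $\nabla$-filtration by Donkin's criterion using $\text{Ext}^1_{\mathscr C_f}(V_\lambda,V_\mu^*)=0$.) Applying $(-)^*$, with $T_n^*\cong T_n$, then gives the last sequence $V_{n'}^*\hookrightarrow T_n\twoheadrightarrow V_n^*$.

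\emph{Main obstacle.} The crux is the fact that $V_n$ has only the two composition factors $L_n$ and $L_{n'}$: for $n_1\geq 2$ the linkage principle alone admits further dominant weights in the block, and their absence is a genuinely characteristic-zero phenomenon, packaged in the quantum Steinberg tensor product theorem (equivalently, the semisimplicity of the category of classical $\mathfrak g$-modules that appears after one quantum Frobenius twist). This is exactly what makes the equality $\dim M=\dim L_{n'}$ come out above, and hence what pins the Weyl filtration of $T_n$ to length two rather than more; verifying this input — or, for a self-contained treatment, reproving it for $\mathfrak{sl}_2$ by an induction on $n_1$ run in parallel with the lemma — is the substantive step, the rest being the binomial bookkeeping and formal manipulation with $(-)^*$.
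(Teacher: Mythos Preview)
Your treatment of part (1) and of the short exact sequences for $V_n$ and $V_n^*$ is correct and considerably more explicit than the paper's, which simply cites [L2, Proposition 9.2] or [APW, Corollary 4.6] for the composition series of $V_n$. Your use of the quantum Lucas theorem to locate the maximal vector $f_{\ell n_1-1}$ and to count $\dim M$, together with the Steinberg dimension formula, is a clean self-contained argument.

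For the two sequences involving $T_n$, however, you take a genuinely different route from the paper, and yours is not quite complete. The paper observes (citing [A2, Proposition 5.8]) that in this case $T_n$ is the \emph{projective cover} of $L_{n'}$; combined with $\text{Ext}^i_{\mathscr C_f}(V_m,V_k^*)=\delta_{i,0}\delta_{m,k}\,\Q(q)$ this gives the BGG-type reciprocity
\[
(T_n:V_k)=\dim\text{Hom}_{\mathscr C_f}(T_n,V_k^*)=[V_k^*:L_{n'}],
\]
and the right-hand side is read off immediately from the composition series of $V_k^*$ already established. This yields $(T_n:V_n)=(T_n:V_{n'})=1$ and all other multiplicities zero in one line.

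Your argument via $\text{Ext}^1_{\mathscr C_f}(V_\mu,V_n)$ does not by itself ``force'' the Weyl filtration to have length two: that Ext group controls which $V_\mu$ can sit directly on top of $V_n$, but does not bound how many further Weyl factors could be stacked above. Your parenthetical alternative --- build the non-split extension of $V_{n'}$ by $V_n$ and verify it is tilting via Donkin's criterion --- is correct and would close the gap, but it still requires computing several $\text{Ext}^1$ groups between Weyl and dual Weyl modules, whereas the paper's reciprocity reduces everything to the composition multiplicities $[V_k^*:L_{n'}]$ you have already computed.
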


\begin{proof}
See [L2, Proposition 9.2] or [APW, Corollary 4.6] for assertions about $V_n, V_n^*$. 
Since $ - \omega_0 n = - (-1) n = n $, 
we have $L_n^* \cong L_n$ and $T_n^* = T_n$. 
By [A2, Proposition 5.8], 
$T_n$ is the projective cover of $L_{n'}$. 
Since $\text{Ext}_{\mathscr C_f}^i ( V_m, V_k^* ) = \Q(q)$
if $i = 0$ and $m = k$;
$0$ otherwise, 
we have the following reciprocity of multiplicities: 
$( T_n, V_k ) = \text{dim}_{\Q(q)} \text{Hom}_{\mathscr C_f} ( T_n, V_k^* ) = ( V_k^*, L_{n'} )$. 
Hence the composition factors of the Weyl modules 
imply the Weyl filtrations of the tilting modules. 
\end{proof}

Let $\mathcal W \cong \Z_2 = \{ 1, -1 \}$ 
be the Weyl group of $\mathfrak {sl}_2$, 
and let $\mathcal W_\ell \cong \Z_2 \ltimes \Z$ 
be the affine Weyl group. 
The shifted action of $\mathcal W_\ell$
on $X = \Z$ 
is defined by:
$(1, m) \cdot n = n +  2m \ell$; 
$( -1, m) \cdot n = - n - 2 + 2 m \ell$.
The fundamental domain for $\mathcal W_\ell$ 
is given by $\bar C = \{ -1, 0, \cdots, \ell - 1 \}$, 
and the linkage principal yields the following decomposition of $O_q$.

\begin{prop}
$O_q = ( \oplus_{k \geq 1} V_{k \ell -1 } \otimes V_{k \ell -1 } ) \oplus ( \oplus_{ m = 0}^{ \ell -2 } \Lambda_m )$ 
as a $U_q \times U_q$-module, 
where $\Lambda_m = \sum_{s \in \mathcal W_\ell \cdot m, s \geq 0} \M ( T_s )$.
\end{prop}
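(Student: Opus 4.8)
The plan is to specialize Proposition \ref{link} (or equivalently Proposition \ref{O_q} together with the linkage principle) to the case $\mathfrak g = \mathfrak{sl}_2$, where everything becomes completely explicit. First I would recall from the general setup that $O_q = \bigoplus_{\mu \in \bar C} \Lambda_\mu$ as a $U_q \times U_q$-module, with $\Lambda_\mu = \sum_{\nu \in \mathcal W_\ell \cdot \mu \cap X^+} \M(T_\nu)$; this is exactly the content of the proof of Proposition \ref{link}, and it holds verbatim here. So the only thing left is to identify the pieces indexed by the vertices of $\bar C$ versus the interior points, and to match the notation $\{ -1, 0, \ldots, \ell-1 \}$ for $\bar C$ with the weight lattice $X = \Z$.

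Next I would pin down which residues $\mu \in \bar C$ give vertices. Under the shifted affine action, the vertices of the simplex $\bar C = \{-1, 0, \ldots, \ell-1\}$ are $-1$ and $\ell - 1$; for $\mathfrak{sl}_2$ these are the endpoints of the fundamental alcove, and both lie in the $\mathcal W_\ell$-orbit of weights of the form $(\ell-1)\rho + \ell X$. Concretely, since $\rho = 1$ for $\mathfrak{sl}_2$, the weights $(\ell-1) + \ell\, X^+ = \{ \ell - 1, 2\ell - 1, 3\ell - 1, \ldots \} = \{ k\ell - 1 : k \geq 1 \}$ are precisely the dominant weights linked to a vertex. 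For each such $\lambda = k\ell - 1$, part (1) of Lemma \ref{t} (with $n_0 = \ell - 1$) tells us $V_\lambda = T_\lambda = V_\lambda^* = L_\lambda$ is irreducible, so $\phi_{V_\lambda}$ is injective and $\M(T_\lambda) = \M(V_\lambda) \cong V_\lambda \otimes V_\lambda^* \cong V_{k\ell-1} \otimes V_{k\ell-1}$. Moreover each such $\lambda$ is alone in its intersection $\mathcal W_\ell \cdot \lambda \cap X^+$ among weights that produce a nonzero summand beyond the lower ones — more precisely the orbit of a vertex restricted to $X^+$ is $\{k\ell - 1 : k \geq 1\}$ listed in increasing order, and the increasing filtration of Theorem \ref{maintheorem2} applied to this orbit has all subquotients irreducible of the form $V_{k\ell-1}^* \otimes V_{k\ell-1}^*$... but here I should be careful: the clean statement is rather that these vertex orbits contribute the summand $\bigoplus_{k \geq 1} V_{k\ell-1} \otimes V_{k\ell-1}$, exactly as in Proposition \ref{link} specialized to $\mathfrak{sl}_2$, because each $V_{k\ell-1}$ is irreducible so its matrix coefficient block is a genuine direct summand.

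For the remaining residues $\mu \in \bar C \setminus \{-1, \ell-1\} = \{0, 1, \ldots, \ell-2\}$, I would simply record that these are the interior alcove points, giving the summands $\Lambda_\mu$ with $\Lambda_\mu = \sum_{\nu \in \mathcal W_\ell \cdot \mu \cap X^+} \M(T_\nu) = \sum_{s \in \mathcal W_\ell \cdot \mu,\, s \geq 0} \M(T_s)$, which is the notation used in the statement (writing $m$ for $\mu$). The orbit $\mathcal W_\ell \cdot m \cap X^+$ for $0 \leq m \leq \ell - 2$ is computed directly from the shifted action $(1,j)\cdot n = n + 2j\ell$ and $(-1,j)\cdot n = -n - 2 + 2j\ell$: it is an infinite set of nonnegative integers, nonempty precisely because $m \geq 0$ already lies in it. Putting the vertex contribution and the $\ell - 1$ interior contributions together gives the asserted direct sum decomposition.

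The one place requiring genuine care — the main obstacle — is justifying that the vertex orbits contribute literal direct summands $V_{k\ell-1} \otimes V_{k\ell-1}$ rather than merely filtration pieces, and simultaneously that these are disjoint from (and hence direct-sum-complementary to) the interior blocks $\Lambda_m$. The disjointness across residue classes is handled by the linkage principle exactly as in Proposition \ref{link} (distinct blocks of $\mathscr C_f$ have no common composition factors, so their matrix-coefficient spans intersect trivially and sum directly inside $O_q \subset U_q^*$). The "genuine summand" claim for the vertex pieces is what distinguishes this from the interior $\Lambda_m$: it uses that $V_{k\ell-1}$ is its own block in $\mathscr C_f$ (it is irreducible, projective and injective, being linked only to itself), so $\M(V_{k\ell-1})$ splits off as a $U_q \times U_q$-direct summand isomorphic to $V_{k\ell-1} \otimes V_{k\ell-1}$ via $\phi_{V_{k\ell-1}}$. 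Once that is in place, the decomposition is just Proposition \ref{link} rewritten with the $\mathfrak{sl}_2$ data $X = \Z$, $X^+ = \N$, $\rho = 1$, $\bar C = \{-1, 0, \ldots, \ell-1\}$, and $(\ell-1)\rho + \ell X^+ = \{k\ell - 1 : k \geq 1\}$.
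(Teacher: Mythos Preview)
Your proposal is correct and follows essentially the same approach as the paper: the paper's proof is simply ``See Proposition \ref{link} and Lemma \ref{t},'' and you do exactly that---specialize Proposition \ref{link} to $\mathfrak{sl}_2$ (where $\rho = 1$, $\bar C = \{-1,0,\dots,\ell-1\}$, and $(\ell-1)\rho + \ell X^+ = \{k\ell-1 : k \geq 1\}$) and invoke Lemma \ref{t}(1) to see that $V_{k\ell-1}$ is irreducible and self-dual. Your extra worry about the vertex pieces being genuine direct summands is already absorbed into the statement of Proposition \ref{link}, so that paragraph can be trimmed, and the brief detour through Theorem \ref{maintheorem2} is unnecessary here.
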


\begin{proof}
See Proposition \ref{link} and Lemma \ref{t}. 
\end{proof}

To analyze the structure of $\Lambda_m$, $0 \leq m \leq \ell - 2$ 
even further, let us take a closer look at the bimodule structure of each $\M ( T_s )$. 

We say that $n_1 < \cdots < n_i < n_{i+1} < \cdots $ 
is a sequence if $n_i = n_{i+1}' $ 
for any $ i \geq 1$
($n_i \geq 0, n_i \neq - 1 \text{   mod   } \ell$ is assumed). 
We can form $\ell -1$ sequences of infinite length starting with $0, 1, \cdots, \ell -2$ respectively, 
which is the same as to arrange the weights 
in the $\mathcal W_\ell$-orbits of $0, 1, \cdots, \ell-2$ in an increasing order.

A module of finite length is called rigid if the socle and radical series coincide, 
in which case the unique shortest filtration with semisimple quotients is called the Loewy series. 
We represent the structure of rigid modules pictorially, 
with the top blocks corresponding to the tops of the modules
and the bottom blocks representing the socles. 

\begin{lemma} 
Let $n_1 < n_2$ be a sequence, i.e. $n_1 = n_2'$, then 
\begin{enumerate}
\item  
$\M ( L_{n_1} ) = L_{n_1} \otimes L_{n_1}$ and $ \M( L_{n_2} ) = L_{n_2} \otimes L_{n_2}$. 
\item  
$\M( V_{n_2} )$ (resp. $\M ( V_{n_2}^* )$)  
is rigid and the Loewy series is given by 
$0 \subset \M(L_{n_1}) \oplus \M(L_{n_2}) \subset \M (V_{n_2})$
(resp.  
$0 \subset \M(L_{n_1}) \oplus \M(L_{n_2}) \subset \M (V_{n_2}^*)$)
with layers depicted by 
$$\M( V_{n_2})  \ \sim \ \begin{gathered}
\xymatrix@R=2pt@C=-20pt{
& *+[F-,]{L_{n_2} \otimes L_{n_1}} &\\
*+[F-,]{L_{n_1} \otimes L_{n_1}} \ar@{}[rr]|-\bigoplus && *+[F-,]{L_{n_2} \otimes L_{n_2}} \\
}\end{gathered}  $$
(resp. 
$$\M( V_{n_2}^*) \ \sim \ \begin{gathered}
\xymatrix@R=2pt@C=-20pt{
& *+[F-,]{L_{n_1} \otimes L_{n_2}} &\\
*+[F-,]{L_{n_1} \otimes L_{n_1}} \ar@{}[rr]|-\bigoplus && *+[F-,]{L_{n_2} \otimes L_{n_2}} \\
}\end{gathered} \ ). $$
\end{enumerate}
\end{lemma}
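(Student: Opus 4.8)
The plan is to obtain part~(1) directly and to reduce part~(2) to computing a single kernel. For part~(1): since $L_{n_1}$ is irreducible, $\phi_{L_{n_1}}$ is injective, so $\M(L_{n_1})\cong L_{n_1}\otimes L_{n_1}^{*}$, and $L_{n_1}^{*}\cong L_{n_1}$ because $-\omega_0=1$ for $\mathfrak{sl}_2$ (cf.\ Lemma~\ref{t}); likewise for $n_2$. For part~(2) I would treat $\M(V_{n_2})=\phi_{V_{n_2}}(V_{n_2}\otimes V_{n_2}^{*})$ first, write it as $(V_{n_2}\otimes V_{n_2}^{*})/K$ with $K:=\ker\phi_{V_{n_2}}$, and pin down $K$ exactly; the case of $\M(V_{n_2}^{*})$ then follows from the same argument with $V_{n_2}$ and $V_{n_2}^{*}$ interchanged, using $(V_{n_2}^{*})^{*}\cong V_{n_2}$. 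Throughout I would use that the $L_m$ are absolutely irreducible over $\Q(q)$, so that the simple $U_q\times U_q$-modules are the external products $L_\mu\otimes L_\nu$ and radicals, socles and submodule lattices of external tensor products behave in the familiar way.

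By Lemma~\ref{t}(2) the Weyl module $V_{n_2}$ is uniserial with head $L_{n_2}$ and socle $L_{n_1}$, and $V_{n_2}^{*}$ is uniserial with head $L_{n_1}$ and socle $L_{n_2}$. Hence the $U_q\times U_q$-module $V_{n_2}\otimes V_{n_2}^{*}$ has simple socle $S_0:=L_{n_1}\otimes L_{n_2}$ and, by the characteristic-zero formula $\operatorname{rad}(M\otimes N)=\operatorname{rad}(M)\otimes N+M\otimes\operatorname{rad}(N)$, simple head $L_{n_2}\otimes L_{n_1}$ with radical $R:=L_{n_1}\otimes V_{n_2}^{*}+V_{n_2}\otimes L_{n_2}$; a straightforward computation (the two summands of $R$ meet in $S_0$, with quotients their heads) gives $R/S_0\cong L_{n_1}\otimes L_{n_1}\oplus L_{n_2}\otimes L_{n_2}$, so $V_{n_2}\otimes V_{n_2}^{*}$ is rigid of Loewy length $3$, each of its four composition factors occurring once.

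Next I would pin down $K$. By Lemma~\ref{matrixcoefficients}(1), $\phi_{V_{n_2}}$ maps $L_{n_1}\otimes V_{n_2}^{*}$ onto $\M(L_{n_1})$, which by part~(1) is simple; since $L_{n_1}\otimes V_{n_2}^{*}$ is uniserial of length $2$, this restriction must kill exactly its socle $S_0$. Arguing the same way with $V_{n_2}\otimes L_{n_2}$ and $\M(L_{n_2})$, I get $K\supseteq S_0$ and that $\phi_{V_{n_2}}$ carries $R/S_0$ isomorphically onto $\M(L_{n_1})\oplus\M(L_{n_2})$ (the two simple summands land in $U_q^{*}$ as distinct simple submodules, so meet in $0$). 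On the other hand $\M(V_{n_2})\supseteq\M(L_{n_1})\neq0$, so $K$ is a \emph{proper} submodule of $V_{n_2}\otimes V_{n_2}^{*}$; as that module has simple head, $R$ is its unique maximal submodule, whence $K\subseteq R$. Combining these, $K/S_0\subseteq R/S_0$ and $\phi_{V_{n_2}}$ is injective on $R/S_0$, so $K=S_0$ and $\M(V_{n_2})\cong(V_{n_2}\otimes V_{n_2}^{*})/S_0$. This quotient has simple head $L_{n_2}\otimes L_{n_1}$ and radical $R/S_0$, which $\phi_{V_{n_2}}$ identifies with the semisimple module $\M(L_{n_1})\oplus\M(L_{n_2})$; the latter therefore lies in the socle and equals it, since otherwise $L_{n_2}\otimes L_{n_1}$---which is the head and a composition factor of multiplicity one, hence not a factor of $R/S_0$---would split off, forcing the complementary summand, and hence $\M(V_{n_2})$ itself, to equal $L_{n_2}\otimes L_{n_1}$, contradicting that it has three composition factors. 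So $\M(V_{n_2})$ is rigid with Loewy series $0\subset\M(L_{n_1})\oplus\M(L_{n_2})\subset\M(V_{n_2})$ and the displayed layers; for $\M(V_{n_2}^{*})$ the head becomes $\operatorname{head}(V_{n_2}^{*})\otimes\operatorname{head}(V_{n_2})=L_{n_1}\otimes L_{n_2}$ and everything else is unchanged.

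I expect the crux to be showing that $K$ is no larger than $S_0$---equivalently, that $L_{n_2}\otimes L_{n_1}$ genuinely survives in $\M(V_{n_2})$ and the module does not collapse to the semisimple $\M(L_{n_1})\oplus\M(L_{n_2})$. The argument above resolves this by playing $S_0\subseteq K$ against $K\subseteq R$, but it rests on $V_{n_2}\otimes V_{n_2}^{*}$ having a simple head and on $\phi_{V_{n_2}}$ being injective on $R/S_0$; both in turn rest on the standard behaviour of radicals, socles and simple modules under external tensor product, which I would record first (this uses finiteness and the absolute irreducibility of the $L_m$ over $\Q(q)$, the latter already implicit in Lemma~\ref{dim1}).
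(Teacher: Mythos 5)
Your argument is correct and follows essentially the same route as the paper's: the paper likewise tensors the composition series $0\subset L_{n_1}\subset V_{n_2}$ with the annihilator filtration $0\subset\text{Ann}(L_{n_1})\subset V_{n_2}^*$, identifies $\text{Ker}\,\phi_{V_{n_2}}$ with the bottom piece $L_{n_1}\otimes\text{Ann}(L_{n_1})=S_0$, pushes the filtration forward through $\phi_{V_{n_2}}$, and concludes rigidity from the non-splitness of $L_{n_1}\hookrightarrow V_{n_2}\twoheadrightarrow L_{n_2}$. You simply supply the details the paper compresses into ``it is easy to see,'' most usefully the sandwich $S_0\subseteq\text{Ker}\,\phi_{V_{n_2}}\subseteq\text{rad}(V_{n_2}\otimes V_{n_2}^*)$ together with injectivity of $\phi_{V_{n_2}}$ on $\text{rad}/S_0$ to pin down the kernel exactly, and a head/socle contradiction in place of the paper's linkage remark.
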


\begin{proof}
(1) is obvious. 
Tensoring $ 0 \subset L_{n_1} \subset V_{n_2}$ together with 
$ 0 \subset L_{n_2} = \text{Ann } (L_{n_1}) \subset V_{n_2}^*$, 
we obtain a filtration of $V_{n_2} \otimes V_{n_2}^*$: 
$ 0 \subset L_{n_1} \otimes \text{Ann } ( L_{n_1}) 
\subset L_{n_1} \otimes V_{n_2}^* + V_{n_2} \otimes \text{Ann } (L_{n_1}) 
\subset V_{n_2} \otimes V_{n_2}^*$. 
Recall the $U_q \times U_q$-map 
$\phi_{V_{n_2}}: V_{n_2} \otimes V_{n_2}^* \twoheadrightarrow \M (V_{n_2})$, 
it is easy to see that $\text{Ker } \phi_{V_{n_2}} = L_{n_1} \otimes \text{Ann } (L_{n_1})$; 
$\phi_{V_{n_2}} ( L_{n_1} \otimes V_{n_2}^*) = \M (L_{n_1})$; and 
$\phi_{V_{n_2}} ( V_{n_2} \otimes \text{Ann } (L_{n_1}) ) = \M ( L_{n_2} )$. 
It follows that $\M ( V_{n_2} )$ 
admits the filtration as claimed in (2). 
The constituent $L_{n_2} \otimes L_{n_1}$ 
is nontrivially linked with both  $ L_{n_1} \otimes L_{n_1} $ and $ L_{n_2} \otimes L_{n_2}$, 
since the exact sequence $L_{n_1} \hookrightarrow V_{n_2} \twoheadrightarrow L_{n_2}$
does not split.
Similar arguments apply to $\M (V_{n_2}^*)$. 
\end{proof}

\begin{lemma} \label{t1}
Let $n_1 < n_2$ be a sequence and $ 0 \leq n_1 \leq \ell - 2 $, then 
\begin{enumerate}
\item $ \M ( T_{n_2} ) $ is rigid and indecomposable as a $U_q \times U_q$-module. 
The Loewy series is given by 
$0 \subset \M (L_{n_1} ) \oplus \M ( L_{n_2}) \subset \M (V_{n_2}) + \M (V_{n_2}^*) \subset \M (T_{n_2})$ with layers depicted by 
$$\M( T_{n_2})  \ \sim \ \begin{gathered}
\xymatrix@R=2pt@C=-20pt{
& *+[F-,]{L_{n_1}\otimes L_{n_1}}  &\\
*+[F-,]{L_{n_2} \otimes L_{n_1}} \ar@{}[rr]|-\bigoplus && *+[F-,]{L_{n_1} \otimes L_{n_2}} \\
*+[F-,]{L_{n_1} \otimes L_{n_1}} \ar@{}[rr]|-\bigoplus && *+[F-,]{L_{n_2} \otimes L_{n_2}}
}\end{gathered} \ . $$
\item $\M (T_{n_1}) \subset \M (T_{n_2})$ 
and $\M (T_{n_2}) / \M (T_{n_1}) \cong V_{n_2}^* \otimes V_{n_2}^*$. 
\end{enumerate}
\end{lemma}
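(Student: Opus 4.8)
The plan is to read off (2) from Theorem \ref{maintheorem2} and then, using (2) together with the rigid structures of $\M(V_{n_2})$ and $\M(V_{n_2}^*)$ furnished by the preceding lemma, to pin down the Loewy series in (1). For (2): since $0\le n_1\le\ell-2$, Lemma \ref{t}(1) gives $T_{n_1}=L_{n_1}$, hence $\M(T_{n_1})=\M(L_{n_1})\cong L_{n_1}\otimes L_{n_1}$ ($\phi_{L_{n_1}}$ is injective on a simple module). By Lemma \ref{t}(2) the quotient $T_{n_2}/V_{n_2}$ is $V_{n_1}\cong L_{n_1}$, so $\M(L_{n_1})\subset\M(T_{n_2})$ by Lemma \ref{matrixcoefficients}(1). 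As $n_1$ is the minimal element of its $\mathcal W_\ell$-orbit, $n_1$ and $n_2$ are the first two terms $\nu_1,\nu_2$ of the sequence appearing in Theorem \ref{maintheorem2} for $\mu=n_1$; taking $i=2$ there, and using $-\omega_0\nu=\nu$ for $\mathfrak{sl}_2$, gives $\M(T_{n_2})/\M(T_{n_1})=P^2/P^1\cong V_{n_2}^*\otimes V_{n_2}^*$, which is (2).

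For (1), I first record the composition factors of $\M(T_{n_2})$: by (2) they are those of $L_{n_1}\otimes L_{n_1}$ together with those of $V_{n_2}^*\otimes V_{n_2}^*$, and since $V_{n_2}^*$ is uniserial with head $L_{n_1}$ over socle $L_{n_2}$ (Lemma \ref{t}(2)) this external tensor product has the four factors $L_{n_1}\otimes L_{n_1},\ L_{n_2}\otimes L_{n_1},\ L_{n_1}\otimes L_{n_2},\ L_{n_2}\otimes L_{n_2}$, each once; thus $\M(T_{n_2})$ has $L_{n_1}\otimes L_{n_1}$ with multiplicity $2$ and the other three with multiplicity $1$. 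Next, $\M(T_{n_2})$ is a $U_q\times U_q$-quotient of $T_{n_2}\otimes T_{n_2}^*\cong T_{n_2}\boxtimes T_{n_2}$ (using $T_{n_2}^*\cong T_{n_2}$, Lemma \ref{t}), and the head of the latter is $\mathrm{head}(T_{n_2})\boxtimes\mathrm{head}(T_{n_2})=L_{n_1}\otimes L_{n_1}$ because $T_{n_2}$ is the projective cover of $L_{n_1}$; hence $\mathrm{head}\,\M(T_{n_2})=L_{n_1}\otimes L_{n_1}$ and $\M(T_{n_2})$ is indecomposable.

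To obtain the radical series I factor through a rigid quotient of $T_{n_2}\boxtimes T_{n_2}$. Here $T_{n_2}$ is uniserial $L_{n_1}/L_{n_2}/L_{n_1}$ with $\mathrm{rad}\,T_{n_2}=V_{n_2}$ and $\mathrm{rad}^2T_{n_2}=\mathrm{soc}\,T_{n_2}=L_{n_1}$ (Lemma \ref{t}(2) together with $T_{n_2}^*\cong T_{n_2}$), and $\mathrm{rad}^k(T_{n_2}\boxtimes T_{n_2})=\sum_{i+j=k}\mathrm{rad}^iT_{n_2}\boxtimes\mathrm{rad}^jT_{n_2}$ since $\Q(q)$ is perfect. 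A direct check gives $\mathrm{rad}^3(T_{n_2}\boxtimes T_{n_2}^*)=\mathrm{soc}\,T_{n_2}\boxtimes\mathrm{rad}\,T_{n_2}^*+\mathrm{rad}\,T_{n_2}\boxtimes\mathrm{soc}\,T_{n_2}^*\subset\ker\phi_{T_{n_2}}$, using that $\phi_{T_{n_2}}(v\otimes f)=0$ whenever $v$ lies in a submodule $W$ of $T_{n_2}$ and $f\in\mathrm{Ann}(W)$, and that $\mathrm{rad}\,T_{n_2}^*=\mathrm{Ann}(\mathrm{soc}\,T_{n_2})$, $\mathrm{soc}\,T_{n_2}^*=\mathrm{Ann}(\mathrm{rad}\,T_{n_2})$. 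Thus $\M(T_{n_2})$ is a quotient of the rigid module $Q:=(T_{n_2}\boxtimes T_{n_2})/\mathrm{rad}^3$, whose Loewy layers are $L_{n_1}\otimes L_{n_1}$, then $L_{n_2}\otimes L_{n_1}\oplus L_{n_1}\otimes L_{n_2}$, then $(L_{n_1}\otimes L_{n_1})^{\oplus2}\oplus L_{n_2}\otimes L_{n_2}$, and which has $\mathrm{soc}\,Q=\mathrm{rad}^2Q$. Comparing with the composition-factor count above, the kernel of $Q\twoheadrightarrow\M(T_{n_2})$ is a single copy of $L_{n_1}\otimes L_{n_1}$, hence a simple submodule of $\mathrm{soc}\,Q$; factoring it out gives exactly the pictured radical series of $\M(T_{n_2})$, in particular $\mathrm{rad}^2\M(T_{n_2})=L_{n_1}\otimes L_{n_1}\oplus L_{n_2}\otimes L_{n_2}$, which is semisimple.

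Finally, rigidity follows once we show $\mathrm{soc}\,\M(T_{n_2})=\mathrm{rad}^2\M(T_{n_2})$; the inclusion $\supseteq$ is clear. Conversely a simple submodule $L\subset\M(T_{n_2})$ with $L\not\subset\mathrm{rad}^2$ embeds into $\M(T_{n_2})/\mathrm{rad}^2$, whose Loewy layers are $L_{n_1}\otimes L_{n_1}$ over $L_{n_2}\otimes L_{n_1}\oplus L_{n_1}\otimes L_{n_2}$, which forbids $L\cong L_{n_1}\otimes L_{n_1}$. If $L\cong L_{n_2}\otimes L_{n_1}$, then since $\M(V_{n_2})\subset\M(T_{n_2})$ (as $V_{n_2}\hookrightarrow T_{n_2}$) and $L_{n_2}\otimes L_{n_1}$ occurs only in the head of the rigid module $\M(V_{n_2})$ (preceding lemma), $L$ meets $\M(V_{n_2})$ trivially and embeds into $\M(T_{n_2})/\M(V_{n_2})$, which by the factor count of (2) contains no $L_{n_2}\otimes L_{n_1}$ — a contradiction; the case $L\cong L_{n_1}\otimes L_{n_2}$ is ruled out symmetrically using $\M(V_{n_2}^*)\subset\M(T_{n_2})$. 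Hence the socle and radical series of $\M(T_{n_2})$ coincide and (1) follows. The main obstacle, I expect, is precisely this endgame: showing $\ker\phi_{T_{n_2}}$ is just large enough that $\M(T_{n_2})$ is $Q$ modulo one $L_{n_1}\otimes L_{n_1}$ inside $\mathrm{soc}\,Q$, and excluding the off-diagonal simples $L_{n_2}\otimes L_{n_1},L_{n_1}\otimes L_{n_2}$ from the socle; everything else is bookkeeping with Lemmas \ref{matrixcoefficients}, \ref{bb}, \ref{t} and the preceding lemma.
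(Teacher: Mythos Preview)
Your argument is correct, but the route differs noticeably from the paper's.  For (2), you invoke Theorem~\ref{maintheorem2} (already proved in Section~3) to identify $P^2/P^1\cong V_{n_2}^*\otimes V_{n_2}^*$; the paper instead computes the preimage directly, showing $\phi_{T_{n_2}}^{-1}\M(L_{n_1})=L_{n_1}\otimes T_{n_2}^*+T_{n_2}\otimes\mathrm{Ann}(V_{n_2})$ and reading off the quotient by hand.  Your shortcut is legitimate and economical.  For (1), the paper tensors the chain $0\subset L_{n_1}\subset V_{n_2}\subset T_{n_2}$ with its annihilator chain in $T_{n_2}^*$, pushes the resulting $3\times3$ grid through $\phi_{T_{n_2}}$, and then interprets everything via the block upper-triangular matrix picture, from which rigidity and indecomposability are ``not hard to see''.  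You instead work with the radical filtration of the external tensor product $T_{n_2}\boxtimes T_{n_2}^*$, observe $\mathrm{rad}^3\subset\ker\phi_{T_{n_2}}$, and pin down $\M(T_{n_2})$ as the quotient of $Q=(T_{n_2}\boxtimes T_{n_2}^*)/\mathrm{rad}^3$ by a single simple in $\mathrm{soc}\,Q$; rigidity is then checked by excluding each possible off-layer simple from the socle using $\M(V_{n_2})$ and $\M(V_{n_2}^*)$.  Two small points you leave implicit but which are needed: that $T_{n_2}\boxtimes T_{n_2}^*$ is rigid (so that $\mathrm{rad}^3=\mathrm{soc}^2$ and hence $\mathrm{soc}\,Q=\mathrm{rad}^2Q$), and that $\M(T_{n_2})/\mathrm{rad}^2$ really has socle equal to $\mathrm{rad}/\mathrm{rad}^2$ (otherwise it would be semisimple, contradicting $\mathrm{rad}\ne\mathrm{rad}^2$).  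Both are immediate once stated.  The paper's matrix approach is more concrete and gives the picture in one stroke; your homological approach is more systematic and makes the dependence on the preceding lemma and on Theorem~\ref{maintheorem2} explicit.
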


\begin{proof}
Tensoring $ 0 \subset L_{n_1} \subset V_{n_2} \subset T_{n_2}$ 
together with $ 0 \subset \text{Ann } (V_{n_2}) \subset \text{Ann } (L_{n_1}) \subset T_{n_2}^*$
gives a filtration of $T_{n_2} \otimes T_{n_2}^*$; 
applying $\phi_{T_{n_2}}: T_{n_2} \otimes T_{n_2}^* \twoheadrightarrow \M (T_{n_2})$
to it, we obtain the desired filtration for $ \M (T_{n_2})$. 
Choose a basis of $T_{n_2}$ 
so that the matrix representations with respect to this basis look like 
$$\left( \begin{array}{ccc}  \M (L_{n_1}) & \vartriangle & \bigstar\\ 0 & \M( L_{n_2}) & \triangledown \\ 0&0& \M (L_{n_1}) \end{array} \right). $$
The diagonal blocks correspond to the simple layers of $T_{n_2}$; 
the matrix coefficients $\M (L_{n_1})$ and $\M (L_{n_2})$, 
together with $\vartriangle$ (resp. $\triangledown$), 
span $\M (V_{n_2})$ (resp. $\M (V_{n_2}^*)$); 
the coefficients in $\bigstar$ 
generate the whole $\M (T_{n_2})$. 
It is not hard to see that $\M (T_{n_2})$ 
is indeed rigid and indecomposable. 

It is obvious that $\M (T_{n_1}) \subset \M (T_{n_2})$
since $T_{n_1} \cong L_{n_1}$ for $0 \leq n_1 \leq \ell -2$. 
Moreover $\M (T_{n_2}) / \M (T_{n_1}) 
\cong ( T_{n_2} \otimes T_{n_2}^* ) / \phi_{T_{n_2}}^{-1} \M (L_{n_1}) 
= ( T_{n_2} \otimes T_{n_2}^* ) / (L_{n_1} \otimes T_{n_2}^* + T_{n_2} \otimes \text{Ann } (V_{n_2}))
\cong V_{n_2}^* \otimes V_{n_2}^*$. 
\end{proof}

\begin{lemma} \label{t2}
Let $n_1 < n_2 < n_3$ be a sequence, then 
\begin{enumerate}
\item 
$\M (T_{n_3})$ is rigid and indecomposable. 
The Loewy series is given by 
$0 \subset \M (L_{n_1}) \oplus \M (L_{n_2}) \oplus \M (L_{n_3}) 
\subset \M (V_{n_2}) + \M (V_{n_2}^*) + \M (V_{n_3}) + \M (V_{n_3}^*)
\subset \M (T_{n_3})$
with layers depicted by 
$$M( T_{n_3})  \ \sim \ \begin{gathered}
\xymatrix@R=2pt@C=-4pt{
&& *+[F-,]{L_{n_{2}}\otimes L_{n_2}}  &\\
*+[F-,]{L_{n_2} \otimes L_{n_1}}   \ar@{}[rr]|-\bigoplus & \quad & 
*+[F-,]{L_{n_1} \otimes L_{n_2}}   \ar@{}[rr]|-\bigoplus & \quad  &   
*+[F-,]{L_{n_3} \otimes L_{n_2}}  \ar@{}[rr]|-\bigoplus & \quad &  
*+[F-,]{L_{n_2} \otimes L_{n_3}} \\
*+[F-,]{L_{n_1} \otimes L_{n_1}}   \ar@{}[rr]|-\bigoplus  & \quad &
*+[F-,]{L_{n_2} \otimes L_{n_2}}   \ar@{}[rr]|-\bigoplus & \quad &
*+[F-,]{L_{n_3} \otimes L_{n_3}}  &
}\end{gathered} \ . $$
\item $\M (T_{n_2}) \cap \M (T_{n_3})  = \M (V_{n_2}) + \M (V_{n_2}^*)$. 
Moreover we have 
$\M (T_{n_3}) / ( \M (T_{n_2}) \cap \M (T_{n_3}) ) \cong V_{n_3}^* \otimes V_{n_3}^*$ and 
$\M (T_{n_3}) / ( \M (V_{n_3}) + \M (V_{n_3}^*) )  \cong V_{n_2} \otimes V_{n_2}$. 
\end{enumerate} 
\end{lemma}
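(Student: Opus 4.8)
The plan is to mimic the bookkeeping of Lemma \ref{t1}, but now working with the longer Weyl filtration of $T_{n_3}$. By Lemma \ref{t} the tilting module $T_{n_3}$ has a Weyl filtration $0 \subset V_{n_3} \subset T_{n_3}$ with $T_{n_3}/V_{n_3} \cong V_{n_2}$ (since $n_3' = n_2$), and dually $V_{n_2}^* \hookrightarrow T_{n_3} \twoheadrightarrow V_{n_3}^*$. Combining these, $T_{n_3}$ is a rigid module whose Loewy series has head $L_{n_2}$, a middle layer $L_{n_1}\oplus L_{n_3}$ (the middles of $V_{n_3}$ and $V_{n_3}^*$ respectively), and socle $L_{n_2}$; equivalently $T_{n_3}$ has composition factors $L_{n_1}, L_{n_2}, L_{n_2}, L_{n_3}$ with the stated Loewy structure. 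First I would fix a basis of $T_{n_3}$ compatible with its socle/radical filtration, so that the matrix representations are block-upper-triangular with diagonal blocks giving $\M(L_{n_2}), \M(L_{n_1})\oplus \M(L_{n_3}), \M(L_{n_2})$; I would then tensor the filtration $0 \subset L_{n_2}\text{(socle)} \subset \mathrm{rad}(T_{n_3}) \subset T_{n_3}$ (or more precisely the three-step Weyl flag $0\subset V_{n_2}^*\subset \text{(something)}\subset T_{n_3}$ coming from the dual Weyl filtration) against the reversed filtration on $T_{n_3}^*$, push everything through $\phi_{T_{n_3}}\colon T_{n_3}\otimes T_{n_3}^* \twoheadrightarrow \M(T_{n_3})$, and read off which pairs of composition factors $L_{n_i}\otimes L_{n_j}$ survive and how they are linked. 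The entries off the main block-diagonal of the matrix realization produce exactly the four ``middle'' coefficient spaces $L_{n_2}\otimes L_{n_1}, L_{n_1}\otimes L_{n_2}, L_{n_3}\otimes L_{n_2}, L_{n_2}\otimes L_{n_3}$, while the top-right corner generates all of $\M(T_{n_3})$; rigidity and indecomposability then follow as in Lemma \ref{t1}, because each middle constituent is nonsplit-linked above and below (the relevant extensions being nonzero since the Weyl modules $V_{n_2}, V_{n_3}$ and their duals are nonsplit).

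For part (2), the identity $\M(T_{n_2})\cap \M(T_{n_3}) = \M(V_{n_2}) + \M(V_{n_2}^*)$ I would get by intersecting the two Loewy pictures: by Lemma \ref{t1}(1) the submodule $\M(T_{n_2})$ of $O_q$ has constituents $L_{n_1}\otimes L_{n_1}, L_{n_2}\otimes L_{n_2}, L_{n_2}\otimes L_{n_1}, L_{n_1}\otimes L_{n_2}$, whereas $\M(T_{n_3})$ has constituents involving $n_2, n_3$ (and $n_1$ only through $L_{n_1}\otimes L_{n_1}$ once, not $L_{n_1}\otimes L_{n_1}$ with the extra multiplicity seen in $\M(T_{n_2})$), so the only constituents common to both submodules of $O_q$ are precisely those of $\M(V_{n_2}) + \M(V_{n_2}^*)$, and since all these are submodules of $O_q$ the intersection is forced to equal $\M(V_{n_2}) + \M(V_{n_2}^*)$. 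The two quotient computations are then pure homological algebra exactly as at the end of Lemma \ref{t1}: using $\phi_{T_{n_3}}$, $\M(T_{n_3})/\bigl(\M(V_{n_2})+\M(V_{n_2}^*)\bigr) \cong (T_{n_3}\otimes T_{n_3}^*)/\phi_{T_{n_3}}^{-1}\bigl(\M(V_{n_2})+\M(V_{n_2}^*)\bigr)$, and identifying the preimage as $W\otimes T_{n_3}^* + T_{n_3}\otimes \mathrm{Ann}(V_{n_3})$ for the appropriate submodule $W\subset T_{n_3}$ with $T_{n_3}/W\cong V_{n_3}^*$, one reads off $V_{n_3}^*\otimes V_{n_3}^*$; dually (or by the symmetry $T^*\cong T$) the other quotient is $V_{n_2}\otimes V_{n_2}$.

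I expect the main obstacle to be part (1): getting the Loewy \emph{layers} right rather than just the composition factors — in particular verifying that $L_{n_2}\otimes L_{n_2}$ appears once in the head (as shown) and once in the socle, and that no spurious splittings occur among the four middle blocks. This requires carefully tracking the nonsplit extensions through the tensor product of the two opposite filtrations, i.e. checking that the middle constituents $L_{n_i}\otimes L_{n_j}$ are genuinely glued to both adjacent layers. The cleanest way is probably to argue on the level of the matrix realization: a cocycle splitting off, say, $L_{n_3}\otimes L_{n_2}$ from the layer below would force the $\triangledown$-entry of the matrix realization of $V_{n_3}^*$ to be a coboundary, contradicting nonsplitness of $L_{n_3}\hookrightarrow V_{n_3}^*\twoheadrightarrow L_{n_2}$ (Lemma \ref{t}(2)). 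Everything else — the intersection formula and the two quotients in part (2) — is the same type of $\phi_{T}$-pushforward argument already used for $\M(T_{n_2})$, so it should go through routinely once the filtration pictures in (1) are established.
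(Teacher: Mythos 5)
Your proposal is correct and follows essentially the same route as the paper: fix a basis of $T_{n_3}$ compatible with its Loewy structure (socle $L_{n_2}$, middle $L_{n_1}\oplus L_{n_3}$, head $L_{n_2}$) so that the matrix realization is block--upper--triangular, read off which blocks give $\M(L_{n_i})$, $\M(V_{n_j})$, $\M(V_{n_j}^*)$ and hence the Loewy layers of $\M(T_{n_3})$, and then obtain the quotient isomorphisms in (2) by computing $\phi_{T_{n_3}}^{-1}\bigl(\M(V_{n_2})+\M(V_{n_2}^*)\bigr)=V_{n_2}^*\otimes T_{n_3}^*+T_{n_3}\otimes\mathrm{Ann}(V_{n_3})$ and dually. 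One small slip: the middle layer $L_{n_1}\oplus L_{n_3}$ does not consist of ``the middles of $V_{n_3}$ and $V_{n_3}^*$'' --- $L_{n_1}$ is not a composition factor of either; rather $L_{n_3}$ comes from the Weyl subquotient $V_{n_3}$ and $L_{n_1}$ from the factor $V_{n_2}\cong T_{n_3}/V_{n_3}$ (equivalently from the submodule $V_{n_2}^*$), but this does not affect the stated Loewy picture.
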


\begin{proof}
The proof is parallel to the proof of the previous two lemmas. 
Since $T_{n_3}$ is rigid with layers 
$L_{n_2}$, $L_{n_1} \oplus L_{n_3}$ and $L_{n_2}$
from the socle to the top, 
we can choose a basis of $T_{n_3}$ 
so that the matrix representations with respect to this basis look like 
$$\left( \begin{array}{cccc}  \M ( L_{n_2} ) & \vartriangle_{n_3} & \triangledown_{n_2} & \bigstar \\ 
0& \M ( L_{n_3} ) & 0 & \triangledown_{n_3} \\ 
0 & 0 & \M ( L_{n_1} )  & \vartriangle_{n_2} \\ 
0 & 0 & 0 & \M ( L_{n_2} ) 
\end{array} \right). $$
We have the matrix coefficients of the irreducibles on the diagonal; 
$\vartriangle_{n_3}$ (resp. $\triangledown_{n_3} $) 
together with $\M (L_{n_2})$, $\M ( L_{n_3} )$ 
span $\M ( V_{n_3} ) $ (resp. $\M ( V_{n_3}^* ) $); 
$\vartriangle_{n_2}$ (resp. $\triangledown_{n_2} $)
together with $\M ( L_{n_1} )$, $\M ( L_{n_2} )$ 
span $\M ( V_{n_2} ) $ (resp. $\M ( V_{n_2}^* )$); 
the top $\bigstar$ generates the whole $\M (T_{n_3} )$. 
Again it is not difficult to see that $\M (T_{n_3})$ 
is rigid and indecomposable, 
and the nonzero blocks in the matrix correspond to the layers of the Loewy series. 

It's clear that 
$\M (T_{n_2}) \cap \M (T_{n_3})  = \M (V_{n_2}) + \M (V_{n_2}^*)$. 
Since 
$\phi_{T_{n_3}}^{-1} ( \M (V_{n_2} ) + \M (V_{n_2}^*) ) 
= V_{n_2}^* \otimes T_{n_3}^* + T_{n_3} \otimes \text{Ann } (V_{n_3} )$ 
and 
$\phi_{T_{n_3}}^{-1} ( \M ( V_{n_3} ) + \M (V_{n_3}^*) ) 
= V_{n_3} \otimes T_{n_3}^* + T_{n_3} \otimes \text{Ann } (V_{n_2}^*) $, 
the last two isomorphisms hold. 
\end{proof}

\begin{theorem} \label{maintheorem1}
Let $n = n_1 < n_2 < \cdots < n_i < \cdots $ 
be the sequence of infinite length starting at $n$
for $0 \leq n \leq \ell -2$. 
\begin{enumerate}
\item $\Lambda_n$ is rigid and indecomposable as a $U_q \times U_q$-module. 
The Loewy series is given by 
$$ 0 \subset \oplus_{i \geq 1} \M (L_{n_i})
\subset \sum_{i \geq 1} \M (V_{n_i}) + \sum_{i \geq 1} \M (V_{n_i}^* )
\subset \sum_{i \geq 1} \M (T_{n_i}) = \Lambda_n $$
with layers
$\oplus_{i \geq 1} L_{n_i} \otimes L_{n_i}$, 
$\oplus_{i \geq 1} ( L_{n_{i+1}} \otimes L_{n_i} \oplus L_{n_i} \otimes L_{n_{i+1}} )$ 
and 
$\oplus_{i \geq 1} L_{n_i} \otimes L_{n_i}$. 

\vspace{.05in}

\item $\Lambda_n$ also admits an increasing filtration of $U_q \times U_q$-submodules 
$$ 0 = P^0 \subset P^1 \subset \cdots \subset P^i \subset \cdots $$
and a decreasing filtration of $U_q \times U_q$-submodules 
$$ \cdots \subset Q^i \subset \cdots \subset Q^2 \subset Q^1 \subset Q^0 = \Lambda_n$$
such that 
$\cup_i P^i = \Lambda_n$, 
$P^i / P^{i-1} \cong V_{n_i}^* \otimes V_{n_i}^*$, 
and 
$\cap_i Q^i = 0$, 
$Q^{i-1} / Q^i \cong V_{n_i} \otimes V_{n_i}$. 
\end{enumerate}
\end{theorem}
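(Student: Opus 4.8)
The plan is to bootstrap everything from the three computational lemmas (Lemma \ref{t1}, Lemma \ref{t2} and the rigidity description of $\M(T_{n_i})$) and take a colimit over $i$. For part (1), I would first observe that $\Lambda_n = \bigcup_{i\ge 1}\M(T_{n_i})$, with $\M(T_{n_i})\subset\M(T_{n_{i+1}})$ by Lemma \ref{t1}(2) and Lemma \ref{t2}(2). So it suffices to identify the socle, radical and head filtrations compatibly along this chain. From Lemma \ref{t2}(1), the module $\M(T_{n_i})$ for $i\ge 2$ has Loewy length $3$, with socle $\oplus_{j\le i}\M(L_{n_j})$ contributions — more precisely the socle of $\M(T_{n_i})$ is $\bigoplus_{j=1}^{i} L_{n_j}\otimes L_{n_j}$ (this is visible from the matrix picture: the socle layer is spanned by the diagonal matrix coefficients $\M(L_{n_j})$). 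Taking the union, $\mathrm{soc}(\Lambda_n)=\bigoplus_{i\ge1}L_{n_i}\otimes L_{n_i}$. Similarly $\Lambda_n/\mathrm{rad}(\Lambda_n)=\bigoplus_{i\ge1}L_{n_i}\otimes L_{n_i}$ and the middle layer is $\bigoplus_{i\ge1}(L_{n_{i+1}}\otimes L_{n_i}\oplus L_{n_i}\otimes L_{n_{i+1}})$, by comparing the three-step filtration of $\M(T_{n_i})$ with that of $\M(T_{n_{i+1}})$ and checking the new composition factors introduced at each step. Rigidity of $\Lambda_n$ then follows because radical series and socle series of each $\M(T_{n_i})$ coincide (Lemma \ref{t2}(1)) and these filtrations are mutually compatible; indecomposability follows since the socle $\bigoplus L_{n_i}\otimes L_{n_i}$ is already ``glued together'' inside each $\M(T_{n_i})$ — the non-split extensions from Lemma \ref{t} show $L_{n_i}\otimes L_{n_i}$ and $L_{n_{i+1}}\otimes L_{n_{i+1}}$ are both linked to the middle constituents, so no nontrivial direct summand can split off.

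For the increasing filtration in part (2), I would take $P^i=\sum_{j\le i}\M(T_{n_j})=\M(T_{n_i})$ directly. The isomorphism $P^i/P^{i-1}\cong V_{n_i}^*\otimes V_{n_i}^*$ is exactly Lemma \ref{t1}(2) for $i\ge2$, while for $i=1$ we have $P^1=\M(T_{n_1})=\M(L_{n_1})=L_{n_1}\otimes L_{n_1}=V_{n_1}^*\otimes V_{n_1}^*$ (using $T_{n_1}=V_{n_1}=V_{n_1}^*=L_{n_1}$ from Lemma \ref{t}(1) since $0\le n_1\le\ell-2$ means $n_1$ has $(n_1)_1=0$). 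Wait — I should double-check: this is really the $\mathfrak{sl}_2$ instance of Theorem \ref{maintheorem2}, with $P^i=\sum_{j\le i}\M(T_{\nu_j})$, so the increasing filtration is immediate from the general theorem. That $\bigcup_i P^i=\Lambda_n$ is the definition of $\Lambda_n$. For the decreasing filtration, the natural candidate is $Q^i=\ker(\Lambda_n\to \M(T_{n_i})/(\,\M(V_{n_i})+\M(V_{n_i}^*)\,))$, or equivalently, dualizing the situation: I would set $Q^i$ to be the sum of those $\M(V_{n_j}),\M(V_{n_j}^*)$ and $\M(L_{n_j})$ that survive ``above level $i$.'' Concretely, from Lemma \ref{t2}(2) we have $\M(T_{n_{i+1}})/(\M(V_{n_{i+1}})+\M(V_{n_{i+1}}^*))\cong V_{n_i}\otimes V_{n_i}$; I would define $Q^i = \sum_{j\ge i+1}(\M(V_{n_j})+\M(V_{n_j}^*)) + \sum_{j\ge i+1}\M(L_{n_j})$ — but more cleanly, $Q^i$ is characterized as the kernel of the composite $\Lambda_n\twoheadrightarrow \Lambda_n/\sum_{j}(\ldots)$. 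The cleanest route: apply $S$ (the antipode) which sends $\M(V)\mapsto\M(V^*)$ and is an anti-automorphism swapping the two $U_q$-factors; since $T_{n_i}^*=T_{n_i}$, $S$ preserves each $\M(T_{n_i})$ and hence $\Lambda_n$, and it should convert the increasing filtration into a decreasing one after also applying an appropriate vector-space duality. Actually the right statement is: $\Lambda_n$ is self-dual under the functor $M\mapsto \mathbf{D}(M)$ (contragredient twisted by the bimodule flip), and $\mathbf{D}$ carries $V_\lambda^*\otimes V_\lambda^*$ to $V_\lambda\otimes V_\lambda$; so $Q^i:=\mathbf D(\Lambda_n/\mathbf D^{-1}P^i)$, i.e. $Q^i$ is the annihilator of $P^i$ under a suitable pairing, giving $Q^{i-1}/Q^i\cong \mathbf D(P^i/P^{i-1})\cong V_{n_i}\otimes V_{n_i}$ and $\bigcap_i Q^i=0$.

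The main obstacle I anticipate is making the decreasing filtration precise and verifying $Q^{i-1}/Q^i\cong V_{n_i}\otimes V_{n_i}$ without hand-waving about ``duality,'' since $\Lambda_n$ is infinite-dimensional so one cannot naively dualize. I would handle this by working one $\M(T_{n_i})$ at a time: define $Q^i\cap\M(T_{n_{i+1}})$ to be the submodule $\M(V_{n_{i+1}})+\M(V_{n_{i+1}}^*)$ appearing in Lemma \ref{t2}(2), check via the matrix picture that these are compatible for varying $i$ (i.e. $(Q^i\cap\M(T_{n_{i+1}}))\subset(Q^i\cap\M(T_{n_{i+2}}))$ and the latter is $Q^i\cap\M(T_{n_{i+2}})$), set $Q^i=\bigcup_{k}(Q^i\cap\M(T_{n_k}))$, and then the quotient $Q^{i-1}/Q^i$ is computed already at the finite level $\M(T_{n_i})$ as $\M(T_{n_i})/(\M(V_{n_i})+\M(V_{n_i}^*))\cong V_{n_i}\otimes V_{n_i}$ by Lemma \ref{t2}(2), with higher $\M(T_{n_k})$ contributing nothing new to the quotient (this is the key compatibility to nail down). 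That $\bigcap_i Q^i=0$ follows because any fixed matrix coefficient lies in some $\M(T_{n_k})$, and for $i$ large it is killed in the quotient defining $Q^i$. The rigidity and indecomposability in (1) are then essentially bookkeeping on top of Lemma \ref{t2}(1), so the real content is the bimodule colimit argument organizing all three lemmas coherently.
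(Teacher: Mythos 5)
Your proposal is built on a claim that is false and that invalidates the entire ``colimit'' framing: you assert $\M(T_{n_i})\subset\M(T_{n_{i+1}})$, citing Lemma \ref{t1}(2) and Lemma \ref{t2}(2). But Lemma \ref{t1}(2) only gives $\M(T_{n_1})\subset\M(T_{n_2})$ (because $T_{n_1}=L_{n_1}$ is a composition factor of $T_{n_2}$), while Lemma \ref{t2}(2) says the opposite of a containment for $i\ge 2$: it gives $\M(T_{n_2})\cap\M(T_{n_3})=\M(V_{n_2})+\M(V_{n_2}^*)$, which is a \emph{proper} submodule of $\M(T_{n_2})$ (the top layer $L_{n_1}\otimes L_{n_1}$ of $\M(T_{n_2})$ is not in the intersection). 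So $\Lambda_n$ is not a nested union of the $\M(T_{n_i})$; it is only their sum. This also makes your equality $P^i=\sum_{j\le i}\M(T_{n_j})=\M(T_{n_i})$ false for $i\ge 3$: for instance $P^3=\M(T_{n_2})+\M(T_{n_3})\supsetneq\M(T_{n_3})$. Your later appeal to Theorem \ref{maintheorem2} for the increasing filtration is the correct fix and does deliver $P^i/P^{i-1}\cong V_{n_i}^*\otimes V_{n_i}^*$, but the preceding paragraph on the socle/radical/rigidity of $\Lambda_n$ should not lean on ``taking the union'' over a chain that does not exist; it has to be rephrased as a statement about the sum $\sum_i\M(T_{n_i})$, using the intersection formula $\M(T_{n_i})\cap\M(T_{n_{i+1}})=\M(V_{n_i})+\M(V_{n_i}^*)$ to see how consecutive $\M(T_{n_i})$ are glued.

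For the decreasing filtration, the paper's answer is simply $Q^i=\sum_{j\ge i+2}\M(T_{n_j})$, which you do not reach. Your first explicit candidate $Q^i=\sum_{j\ge i+1}(\M(V_{n_j})+\M(V_{n_j}^*))+\sum_{j\ge i+1}\M(L_{n_j})$ fails already at $i=0$: it gives only the second level of the Loewy filtration of $\Lambda_n$, not $\Lambda_n$ itself, because none of the tops $L_{n_k}\otimes L_{n_k}$ are captured. Your subsequent ``duality'' route you yourself flag as hand-waving, and your final ``define $Q^i$ by its intersections with each $\M(T_{n_k})$'' is circular as written ($Q^i$ is used before it is defined) and never supplies the compatible system of submodules. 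The clean fix, once you have $Q^i=\sum_{j\ge i+2}\M(T_{n_j})$, is exactly the computation you sketch for the finite level: $Q^{i-1}/Q^i\cong\M(T_{n_{i+1}})/(\M(T_{n_{i+1}})\cap Q^i)$, and one checks $\M(T_{n_{i+1}})\cap Q^i=\M(T_{n_{i+1}})\cap\M(T_{n_{i+2}})=\M(V_{n_{i+1}})+\M(V_{n_{i+1}}^*)$, so Lemma \ref{t2}(2) gives $Q^{i-1}/Q^i\cong V_{n_i}\otimes V_{n_i}$; and $\bigcap_i Q^i=0$ because the top factor $L_{n_{i-1}}\otimes L_{n_{i-1}}$ of $\M(T_{n_i})$ eventually drops out. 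So the missing idea is the correct formula for $Q^i$; the surrounding verification steps you outline are sound, but as written there is no concrete object to verify them against.
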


\begin{proof}
It follows from the three lemmas. 
For (2), set $P^i = \sum_{j \leq i} \M (T_{n_j})$ 
and $Q^i = \sum_{j \geq i+2} \M (T_{n_j})$. 
\end{proof}

Finally let's find out explicitly the cocommutative elements of $O_q$. 
Let $Y_n \subset O_q$ be the linear span of monomials $a^m b^k c^h, b^k c^h d^l$
of degree $\leq n$, 
i.e. $Y_n = \sum_{i \leq n} \M ( T_i ) = \sum_{i \leq n} \M ( T_1)^i$.

\begin{lemma}
$Y_0 \subset Y_1 \subset \cdots \subset Y_{n-1} \subset Y_n \subset \cdots$
is a filtration of $U_q \times U_q$-submodules of $O_q$ 
with subquotients $Y_n / Y_{n-1} \cong V_n^* \otimes V_n^*$. 
\end{lemma}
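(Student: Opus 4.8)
The plan is to read off the subquotients from the $\mathfrak{sl}_2$ linkage decomposition of $O_q$ together with Theorem \ref{maintheorem1}(2), after cutting the whole question down to a single linkage class. First I would dispose of the formal points: each $\M(T_i)$ is the image of the $U_q\times U_q$-morphism $\phi_{T_i}$ and hence a $U_q\times U_q$-submodule of $O_q$, so $Y_n=\sum_{i\le n}\M(T_i)$ is a $U_q\times U_q$-submodule, $Y_{n-1}\subseteq Y_n$ is trivial, and $\bigcup_n Y_n=O_q$ by Proposition \ref{O_q}. What remains is the identification of $Y_n/Y_{n-1}$.

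Next I would intersect the filtration with the decomposition of Proposition \ref{link} in its $\mathfrak{sl}_2$ form, $O_q=\big(\bigoplus_{k\ge 1}\M(T_{k\ell-1})\big)\oplus\big(\bigoplus_{m=0}^{\ell-2}\Lambda_m\big)$ with $\Lambda_m=\sum_{s\in\mathcal W_\ell\cdot m,\ s\ge 0}\M(T_s)$. Since $Y_n$ is spanned by the $\M(T_i)$ with $i\le n$ and every $\M(T_i)$ is contained in exactly one of these summands (the one attached to the linkage class of $i$), $Y_n$ is the internal direct sum of its intersections with them, and likewise $Y_{n-1}$; because $n$ lies in exactly one $\mathcal W_\ell$-orbit, these intersections agree for $Y_n$ and $Y_{n-1}$ outside the single summand attached to the orbit of $n$, so $Y_n/Y_{n-1}$ can be computed there alone.

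Then I would split into two cases. If $n=k\ell-1$ for some $k\ge 1$, then $\M(T_n)=\M(L_n)$ is itself a summand of $O_q$ and is simple as a $U_q\times U_q$-module, whence $Y_n/Y_{n-1}\cong\M(L_n)\cong L_n\otimes L_n^*$; and this is $V_n^*\otimes V_n^*$ because $V_n=V_n^*=L_n$ by Lemma \ref{t}(1). Otherwise $n$ lies in $\mathcal W_\ell\cdot m$ for a unique $m$ with $0\le m\le\ell-2$; writing the sequence through $m$ as $m=m_1<m_2<\cdots$ and $n=m_k$, the definition of a sequence leaves no orbit point strictly between $m_{k-1}$ and $m_k$, so $Y_{n-1}\cap\Lambda_m=\sum_{j\le k-1}\M(T_{m_j})=P^{k-1}$ and $Y_n\cap\Lambda_m=P^k$ in the notation of Theorem \ref{maintheorem1}(2). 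Hence $Y_n/Y_{n-1}\cong P^k/P^{k-1}\cong V_{m_k}^*\otimes V_{m_k}^*=V_n^*\otimes V_n^*$, the degenerate case $0\le n\le\ell-2$ (where $k=1$) being absorbed with the convention $P^0=0$.

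The argument is mostly bookkeeping, and the one point that needs care is precisely the reduction to a single block: one must be sure that $Y_{n-1}$ does not meet $\M(T_n)$ through matrix coefficients of tilting modules from other linkage classes, nor from the strictly smaller weights of the same class beyond $P^{k-1}$. Both are guaranteed by the directness of the decomposition in Proposition \ref{link} and the strict increase of the sequences, so I do not expect any genuine difficulty beyond what Theorem \ref{maintheorem1} already provides.
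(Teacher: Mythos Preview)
Your proposal is correct and follows essentially the same route as the paper. The paper's one-line proof cites Lemma~\ref{t1}(2) and Lemma~\ref{t2}(2) directly, whereas you route through Theorem~\ref{maintheorem1}(2), which is itself derived from those lemmas; the block reduction via the linkage decomposition that you spell out carefully is exactly what the paper leaves implicit.
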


\begin{proof}
It follows from Lemma \ref{t1} (2) and Lemma \ref{t2} (2). 
\end{proof}

\begin{lemma} \label{t3}
The subspace $\{ x \in V_n^* \otimes V_n^*: \rho_1' (u) x = \rho_2' ( S^{-1} u ) x,  \forall u \in U_q \}$ 
is one-dimensional 
where $\rho_1', \rho_2'$ denote the actions of $U_q$ on the two copies of $V_n^*$. 
\end{lemma}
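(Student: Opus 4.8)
The plan is to reduce this to Lemma~\ref{dim1}. Recall from Lemma~\ref{t} that for $0 \le n \le \ell - 2$ the module $V_n^*$ is, in general, reducible, but in all cases $\mathfrak{sl}_2$ is self-dual in the sense $-\omega_0 n = n$, so the space appearing here is exactly the space $Y$ of Lemma~\ref{dim1} with $\lambda = n$ (since $V_{-\omega_0 n}^* = V_n^*$). Thus the statement is literally a special case of Lemma~\ref{dim1}, and the first and cleanest option is simply to invoke it. If one wants a self-contained argument in the $\mathfrak{sl}_2$ language, one reworks the proof of Lemma~\ref{dim1} using the explicit bases $e_0, \dots, e_n$ of $V_n^*$ given just before Lemma~\ref{t}.

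In that explicit approach I would proceed as follows. Write a general element as $x = \sum_{i,j} x_{ij}\, e_i \otimes e_j'$, where $e_i, e_i'$ are the two copies of the basis of $V_n^*$, with $e_i$ of weight $n - 2i$. First impose the equation $\rho_1'(u)x = \rho_2'(S^{-1}u)x$ for $u = K$: since $\rho_2'(S^{-1}K)$ acts on $e_j'$ by the character $\chi_{n-2j}\circ S^{-1} = \chi_{-(n-2j)} = \chi_{2j - n}$, matching weights on the two tensor factors forces $x_{ij} = 0$ unless the weight of $e_i$ equals the negative of the weight of $e_j'$, i.e.\ $n - 2i = -(n - 2j)$, i.e.\ $i + j = n$. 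So $x = \sum_i x_{i,n-i}\, e_i \otimes e_{n-i}'$, and in particular the ``$(0,n)$-coordinate'' $x_{0,n}$ is singled out. Define $\mathrm{pr}(x) = x_{0,n}$; I will show $\mathrm{pr}$ is injective on the solution space. Using that $e_0$ generates $V_n^*$ under $F^{(j)}$'s (it is the highest weight vector of weight $n$) and pairing $x$ against elements of the form $F^{(j_1)} \otimes F^{(j_2)}$ --- more precisely, mimicking the computation in Lemma~\ref{dim1} with $U_q^+$ replaced by the appropriate raising/lowering operators that recover $e_0 \otimes e_0'$ --- one finds that $x_{0,n} = 0$ forces all $x_{i,n-i} = 0$. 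Hence the solution space has dimension $\le 1$.

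For the lower bound, I exhibit a nonzero solution. The submodule $L_n \otimes L_n^* \subset V_n^* \otimes V_n^*$ (using $L_n \subset V_n^*$ as socle, cf.\ Lemma~\ref{t}, and $L_n^* \cong L_n$) contains the canonical element $\sum_\alpha w_\alpha \otimes w_\alpha^\vee$ for $\{w_\alpha\}$ a basis of $L_n$ with dual basis $\{w_\alpha^\vee\}$; this is precisely the trace element, and it satisfies $\rho_1'(u)\big(\sum w_\alpha \otimes w_\alpha^\vee\big) = \rho_2'(S^{-1}u)\big(\sum w_\alpha \otimes w_\alpha^\vee\big)$ by the standard computation (this is the same check made at the end of the proof of Lemma~\ref{dim1}). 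Since this element is nonzero --- its image in the socle layer $L_n \otimes L_n$ of $V_n^* \otimes V_n^*$ is nonzero --- the solution space is exactly one-dimensional.

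The main obstacle is the injectivity of $\mathrm{pr}$: one must argue that knowing the single coordinate $x_{0,n}$ determines all the others, which is where the module structure (not just the weight grading) enters. In the abstract setting this is the content of the pairing computation $x(u_1^+ \otimes u_2^+) = x_{11}\, v_1'(u_2^+ S^{-1} u_1^+)$ in Lemma~\ref{dim1}; in the $\mathfrak{sl}_2$ computation it amounts to checking that the $E^{(j)}$- and $F^{(j)}$-actions on the explicit bases propagate the constraint from the extremal coordinate to all others, using the self-duality $L_n^* \cong L_n$ to identify the relevant matrix entries. Given Lemma~\ref{dim1} is already proved, the honest shortest proof is just: "This is Lemma~\ref{dim1} with $\lambda = n$, since $-\omega_0 n = n$ for $\mathfrak{sl}_2$."
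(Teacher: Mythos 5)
Your reduction to Lemma~\ref{dim1} with $\lambda = n$ (using $-\omega_0 n = n$ for $\mathfrak{sl}_2$) is exactly the paper's opening observation, so the proposal is correct and follows the same approach. Your explicit-computation sketch is tangential and differs slightly from the paper's (which derives the closed-form solution $y = \sum_i (-1)^i q^{i(n-i+1)} \left[ \begin{array}{c} n \\ i \end{array} \right]_q e_i \otimes e_{n-i}$ from the $E^{(j)}$-action recursion and verifies it directly, rather than exhibiting the trace element of $L_n \otimes L_n^*$); note also the minor slip that $e_0$ generates only the socle $L_n \subset V_n^*$, not all of $V_n^*$ when $V_n^*$ is reducible, though this does not affect your main argument.
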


\begin{proof}
Of course it follows from Lemma \ref{dim1}, the general version of it. 
But here we can compute more explicitly, 
which is actually the motivation behind the proof of Lemma \ref{dim1}. 

Recall that $V_n^*$ is $(n+1)$-dimensional and has a basis $e_0, e_1, \cdots, e_n$ such that 
$e_i$ is of weight $n - 2 i$ and 
$E^{(j)} e_i =  \left[ \begin{array}{c} i \\ j  \end{array} \right]_q e_{i-j}$; 
$F^{(j)} e_i =  \left[ \begin{array}{c} n-i \\ j \end{array} \right]_q e_{i+j}$. 
For any $x = \sum_{i, j} x_{ij} e_i \otimes e_j \in V_n^* \otimes V_n^*$, 
if it satisfies that $\rho_1' (u^0 ) x = \rho_2' ( S^{-1} u^0 ) x$ 
for any $u^0 \in U_q^0$, 
we must have $x_{ij} = 0$ 
except for $ i + j = n$. 
Now let $x = \sum_i x_{i, n-i} e_i \otimes e_{n-i}$, 
since $S^{-1} E^{(j)} = (-1)^j q^{-j(j-1)} E^{(j)} K^{-j}$, 
it follows that 
$$\rho_1' (E^{(j)}) x 
= \sum_{i = j}^n x_{i, n-i} \left[ \begin{array}{c} i \\  j \end{array} \right]_q e_{i-j} \otimes e_{n-i}$$
and 
$$\rho_2' (S^{-1} E^{(j)}) x
= \sum_{i =0}^{n-j} x_{i, n-i} (-1)^j q^{-j(j-1)} q^{-j (2i-n)} \left[ \begin{array}{c} n-i \\ j  \end{array} \right]_q e_i \otimes e_{n-i-j}$$
for any $j \in \N, j \leq n$. 
Hence if $\rho_1' (E^{(j)}) x = \rho_2' (S^{-1} E^{(j)}) x$, 
then 
$x_{i, n-i} \left[ \begin{array}{c} i \\ j \end{array} \right]_q 
= x_{i-j, n-i+j} (-1)^j q^{j(j+n-2i+1)} \left[ \begin{array}{c} n-i+j \\ j  \end{array} \right]_q$, 
in particular  
$x_{i, n-i} = x_{0,n} (-1)^i q^{i(n-i+1)} \left[ \begin{array}{c} n \\ i  \end{array} \right]_q$, 
which implies that  $x = x_{0, n} y$  
with  
$y = \sum_{i=0}^n (-1)^i q^{i(n-i+1)} \left[ \begin{array}{c} n \\ i  \end{array} \right]_q e_i \otimes e_{n-i}$. 
On the other hand it is straightforward to check that  
$\rho_1' (u) y = \rho_2' (S^{-1} u) y$ holds for any $u\in U_q$. 
\end{proof}

\begin{prop}
$HH^0 (O_q, O_q) = \Q(q) [a+d]$.
\end{prop}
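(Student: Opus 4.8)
The plan is to identify $HH^0(O_q,O_q)$, the space $\mathbf{Co}$ of cocommutative elements, with a concrete algebra. By Proposition \ref{Hochschild} applied to $\mathfrak{sl}_2$, we already know $\mathbf{Co}=\mathbf{tr}$, the span of the traces $tr_n := tr_{L_n}$, $n\in\N$, and that $\mathbf{tr}\cong\mathcal R=\Q(q)[\mathscr C_f]\otimes_\Z\Q(q)\cong\Q(q)[X]^{\mathcal W}$ as algebras. For $\mathfrak{sl}_2$ we have $X=\Z$ and $\mathcal W=\Z_2$ acting by $n\mapsto -n$, so $\Q(q)[X]^{\mathcal W}$ is the ring of symmetric Laurent polynomials $\Q(q)[t,t^{-1}]^{\Z_2}$, which is a polynomial ring $\Q(q)[t+t^{-1}]$ in one variable. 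Thus $HH^0(O_q,O_q)$ is a polynomial ring on one generator, and it remains only to show that $a+d\in O_q$ is that generator.

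First I would check that $a+d$ is cocommutative: it is precisely $tr_{V_1}$, the trace of the matrix coefficient of the two-dimensional module $V_1=L_1$, since in the basis realizing the relations the matrix of $V_1$ has diagonal entries corresponding to $a$ and $d$. (Equivalently, one verifies directly from the coproduct formulas $\triangle(a)=a\otimes a+b\otimes c$, $\triangle(d)=c\otimes b+d\otimes d$ together with $bc=cb$ that $a+d$ is cocommutative.) Hence $a+d=tr_1\in\mathbf{tr}=HH^0(O_q,O_q)$, and under the algebra isomorphism $\mathbf{tr}\cong\Q(q)[X]^{\mathcal W}$ it corresponds to $\mathrm{ch}\,V_1=t+t^{-1}$, which generates $\Q(q)[X]^{\mathcal W}$ as an algebra.

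It then follows that the subalgebra $\Q(q)[a+d]$ generated by $a+d$ is all of $\mathbf{tr}$: on the $\mathrm{ch}$ side, the powers $(t+t^{-1})^n$ span the same space as $\{\mathrm{ch}\,V_n\}_{n\ge 0}$ (each $\mathrm{ch}\,V_n$ is a monic polynomial of degree $n$ in $t+t^{-1}$), and $\{\mathrm{ch}\,V_n\}$ is a basis of $\Q(q)[X]^{\mathcal W}$ by the Weyl filtration/character argument. Transporting back, $\{(a+d)^n\}_{n\ge 0}$ spans $\mathbf{tr}$, and since $a+d$ is not algebraic over $\Q(q)$ (its image $t+t^{-1}$ is not), the map $\Q(q)[z]\to\mathbf{tr}$, $z\mapsto a+d$, is an isomorphism. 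Combining with $HH^0(O_q,O_q)=\mathbf{Co}=\mathbf{tr}$ gives $HH^0(O_q,O_q)=\Q(q)[a+d]$.

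I expect no serious obstacle here, since all the heavy lifting is done by Proposition \ref{Hochschild}; the only point requiring genuine (but routine) verification is the explicit identification of $a+d$ with $tr_{V_1}$ and hence with $\mathrm{ch}\,V_1$ under the isomorphism of Proposition \ref{Hochschild} — one should make sure the conventions for $V_1$, its dual, and the matrix coefficients $X_{ij}=\{a,b,c,d\}$ are aligned so that $tr_{V_1}=X_{11}+X_{22}=a+d$ rather than, say, $a+d$ up to a scalar or a twist by $S$. Alternatively, and perhaps more cleanly, one can bypass Proposition \ref{Hochschild} entirely for $\mathfrak{sl}_2$ by using Theorem \ref{maintheorem1}(2) and Lemma \ref{t3}: filtering $O_q=\bigcup_n Y_n$ with $Y_n/Y_{n-1}\cong V_n^*\otimes V_n^*$, each graded piece contributes at most a one-dimensional space of cocommutative elements by Lemma \ref{t3}, spanned by the image of $tr_n$, and the induction of Proposition \ref{Hochschild} then shows $\mathbf{Co}=\mathrm{span}\{tr_n\}$; one checks $tr_n=(a+d)^n+(\text{lower degree in }Y_{n-1})$ by noting $tr_{V_1^{\otimes n}}=(a+d)^n$ and decomposing $V_1^{\otimes n}$ into Weyl/tilting pieces, which pins down $\Q(q)[a+d]=\mathbf{Co}$ directly.
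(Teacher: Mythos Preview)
Your proposal is correct. Your primary argument---specialize Proposition~\ref{Hochschild} to $\mathfrak{sl}_2$, identify $\Q(q)[X]^{\mathcal W}$ with $\Q(q)[t+t^{-1}]$, and check that the algebra isomorphism $\mathbf{tr}\cong\Q(q)[X]^{\mathcal W}$ carries $a+d=tr_{V_1}$ to the generator $t+t^{-1}$---is valid and is genuinely different from the paper's route. The paper does \emph{not} invoke Proposition~\ref{Hochschild} here; instead it runs the self-contained $\mathfrak{sl}_2$ induction along the filtration $O_q=\bigcup_n Y_n$ with $Y_n/Y_{n-1}\cong V_n^*\otimes V_n^*$, using Lemma~\ref{t3} at each step to peel off a multiple of $(a+d)^{n+1}$ from any cocommutative element in $Y_{n+1}$. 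This is precisely the ``alternative'' you sketch at the end (note: the relevant filtration statement is the unnamed lemma immediately preceding Lemma~\ref{t3}, not Theorem~\ref{maintheorem1}(2)). Your main approach is shorter once Proposition~\ref{Hochschild} is in hand; the paper's approach is more explicit and keeps Section~4 logically independent of the general-type machinery, which matches its stated purpose of treating $\mathfrak{sl}_2$ ``more thoroughly.'' The one verification you flag---that $tr_{V_1}=X_{11}+X_{22}=a+d$ on the nose---is indeed routine with the paper's conventions.
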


\begin{proof}
Denote the set of cocommutative elements of $O_q$ by $\mathbf {Co}$.
We need to show that $\mathbf{Co}$ consists of polynomials in $a+d$.
Recall that $O_q = \cup_{n \geq 0} Y_n$, 
and it is trivial that $Y_0 \cap \mathbf {Co} = \Q(q)$. 
Assume now that $Y_n \cap \mathbf {Co} $
is linearly spanned by polynomials of degree $\leq n$ in $a+d$. 
Suppose $f \in Y_{n+1} \cap \mathbf {Co}$, 
then $\rho_1 (u) f = \rho_2 (S^{-1} u) f $ for any $u\in U_q$. 
Since the image of $(a+d)^{n+1}$ in $Y_{n+1} / Y_n$ is nonzero, 
by Lemma \ref{t3} there exists a scalar $\zeta$ 
such that $f - \zeta (a + d)^{n+1} \in Y_n$. 
Note that $f - \zeta (a + d)^{n+1}$ is also cocommutative, 
i.e. it belongs to $Y_n \cap \mathbf {Co}$, 
by induction $f - \zeta (a + d)^{n+1}$ is a polynomial of degree $\leq n$
in $a+d$, therefore $f$ is a polynomial of degree $\leq n+1$ in $a+d$. 
\end{proof}


\begin{thebibliography}{99999}\frenchspacing


\bibitem[AGL] {AGL} A. Alekseev, D. Glushchenkov, A. Lyakhovskaya, Regular representation of the quantum group $\it{sl}_{q}(2)$ ($q$ is a root of unity). Algebra i Analiz 6 (1994), no. 5, 88-125.

\bibitem[APW] {APW} H. H. Andersen, P. Polo, Wen K., Representations of quantum algebras, Invent. math. 104, 1-59(1991). 

\bibitem[A1] {A1} H. H. Andersen, The strong linkage principle for quantum groups at roots of 1, J. Alg., 260(2003) 2-15.

\bibitem[A2] {A2} H. H. Andersen, Tensor products of quantized tilting modules, Commun. Math. Phys. 149, 149-159 (1992).

\bibitem[CL]{CL} C. D. Concini, V. Lyubashenko, Quantum function algebra at roots of 1, Adv. Math. 108, 205-262 (1994). 

\bibitem[D]{D} V. G. Drinfeld, Hopf algebras and the quantum Yang-Baxter equation, Soviet Math. Dokl. 32, 254-258 (1985). 

\bibitem[J]{J} M. Jimbo, A $q$-difference analogue of $U(\mathfrak g)$ and the Yang-Baxter equation, Lett. Math. Phys. 10 (1985), 63-69. 

\bibitem[KL1]{KL1} D. Kazhdan, G. Lusztig, Tensor structures arising from affine Lie algebras I, J. Amer. Math. Soc. 6 (1993), 905-947.

\bibitem[KL2]{KL2} D. Kazhdan, G. Lusztig, Tensor structures arising from affine Lie algebras II, J. Amer. Math. Soc. 6 (1993), 949-1011.

\bibitem[KL3]{KL3} D. Kazhdan, G. Lusztig, Tensor structures arising from affine Lie algebras III, J. Amer. Math. Soc. 7 (1994), 335-381.

\bibitem[KL4]{KL4} D. Kazhdan, G. Lusztig, Tensor structures arising from affine Lie algebras IV, J. Amer. Math. Soc. 7 (1994), 383-453.

\bibitem[L1] {L1} G. Lusztig, Finite dimensional Hopf algebras arising from quantized universal enveloping algebra, J. Amer. Math. Soc., Vol. 3, No.1. (Jan., 1990), pp. 257-296.

\bibitem[L2] {L2} G. Lusztig, Modular representations and quantum groups, Comtemp. Math., Vol. 82, Amer. Math. Soc., Providence, R.I., 1989, pp. 59-77.


\bibitem[L3] {L3} G. Lusztig, Quantum groups at roots of 1, Geometriae Dedicata 35, 89-114, 1990.



\bibitem[S] {S} K. Styrkas, Regular representation on the big cell and big projective modules in the category $\mathcal{O}$, math.RT/0410588.

\bibitem[T] {T} T. Tanisaki, Finite dimensional representations of quantum groups, Osaka J. Math. 28 (1991), 37-53. 

\bibitem[Z1] {Z1} M. Zhu, Vertex operator algebras associated to modified regular representations of affine Lie algebras, math.QA/0611517. 

\bibitem[Z2] {Z2} M. Zhu, On the semi-regular module and vertex operator algebras, preprint, 2007.

\end{thebibliography}
\end{document}